\renewcommand{\leq}{\leqslant}
\newcommand{\rr}{{\mathbb{R}}}
\newcommand{\hh}{{\mathbb{H}}}
\newcommand{\GG}{{\mathbb{G}}}
\newcommand{\nn}{{\mathbb{N}}}
\newcommand{\hx}{{\hat x}}
\newcommand{\hxi}{{\hat \xi}}
\newcommand{\hy}{{\hat y}}
\newcommand{\he}{{\hat \eta}}
\newcommand{\hv}{{\hat v}}
\newcommand{\hz}{{\hat z}}
\newcommand{\eps}{\varepsilon}
\newcommand{\ga}{\gamma}
\newcommand{\escpr}[1]{\langle#1\rangle}
\newcommand*{\medcup}{\mathbin{\scalebox{0.8}{\ensuremath{\cup}}}}
\DeclareMathOperator{\divv}{div}
\newtheorem{theorem}{Theorem}[section]
\newtheorem{proposition}[theorem]{Proposition}
\newtheorem{lemma}[theorem]{Lemma}
\newtheorem{corollary}[theorem]{Corollary}
\theoremstyle{definition}
\newtheorem{remark}[theorem]{Remark}
\newtheorem{example}[theorem]{Example}
\newtheorem{definition}[theorem]{Definition} 
\numberwithin{equation}{section}
\theoremstyle{remark}
\begin{document}

\title[Schauder estimates up to the boundary on h-type groups]{Schauder estimates up to the boundary on h-type groups: an approach via the double layer potential}

\author[G.~Citti]{Giovanna Citti}
\address{Dipartimento di Matematica, Piazza di Porta S. Donato 5, 401
26 Bologna, Italy}
\email{giovanna.citti@unibo.it}

\author[G.~Giovannardi]{Gianmarco Giovannardi}
\address{Dipartimento di Matematica Informatica "U. Dini", Universita  degli Studi di Firenze, Viale Morgani 67/A, 50134, Firenze, Italy}
\email{gianmarco.giovannardi@unifi.it}

\author[Y.~Sire]{Yannick Sire}
\address{Department of Mathematics, Johns Hopkins University, Baltimore, MD 21218, USA}
\email{ysire1@jhu.edu}

\date{\today}

\thanks{The first author have been supported by Horizon 2020 Project ref. 777822: GHAIA. The second author have been supported by MEC-Feder grant PID2020-118180GB-I00, Horizon 2020 Project ref. 777822: GHAIA and INdAM-GNAMPA Project 2022 CUP-E55F22000270001. The third author is partially funded by NSF grant DMS 2154219 and the Simons foundation. }
\subjclass[2000]{35B65, 35J25, 35R03}
\keywords{The double layer potential, Schauder estimates at the boundary, Heisenberg-type groups, reflection-type argument}

\maketitle

\thispagestyle{empty}

\begin{abstract}
We establish the Schauder estimates at the boundary away from the characteristic points for the Dirichlet problem
by means of the double layer potential in { a Heisenberg-type group $\GG$.} 
Despite its singularity we manage to invert the double layer potential restricted to the boundary thanks to a reflection technique for an approximate operator in {$\GG$. }
This is the first instance where a reflection-type argument appears to be useful in the sub-Riemannian setting.
\end{abstract}

\section{Introduction}
Schauder estimates at the boundary are important tools in regularity theory and applications to PDEs. On an Euclidean (smooth) domain, they are obtained classically by combining a  flattening of the boundary together with  a reflection argument that allows to use interior estimates.

This classical reflection technique does not work when the ambient space, instead of $\rr^n$, is the simplest sub-Riemannian manifold: the Heisenberg $\mathbb{H}^n$, $n\ge1$. Given $f,g \in C^{\infty} $ Kohn and Nirenberg in \cite{KN65} proved that the solution is smooth up to the boundary at the non-characteristic points of the boundary $\partial \Omega$, where the projection of the Euclidean normal to $\partial \Omega$ onto the horizontal  distribution is different from zero.
Then Jerison in \cite{Jerison1} using  the method of the \textit{single layer potential} was able to invert the operator restricted to the boundary to construct a Poisson kernel that directly yields  to the Schauder estimates around a non-characteristic point.
Later in \cite{Jerison2} under suitable conditions on the characteristic point  Jerison obtained the Schauder estimates around a strongly isolated characteristic boundary  point. On the other hand, always in \cite[Section 3]{Jerison2} Jerison showed the celebrated example of the paraboloid where the Schauder estimate fails, since  the solution
of the Dirichlet problem with real analytic datum $g$ and $f=0$ may be not better than h\"older continuous near a
characteristic boundary point.

Recently Baldi, Citti and Cupini \cite{BCC19}, assuming a geometric hypothesis on the boundary, obtained  Schauder estimates at the boundary in neighborhood of non-characteristic points for the problem $\Delta_{\mathbb{G}} u= f$ in  $\Omega$ with Dirichlet boundary condition 
$u=g $ on $\partial \Omega$.
Here $ \Delta_{\mathbb{G}}= \sum_{i=1}^k X_i^2$ is   the  sub-Laplacian in a generic Carnot group $\mathbb{G}$ with distribution $V^1$ generated by the vector fields  $X_1,\ldots,X_k$ and  $\Omega $ is a bounded open set of $\mathbb{G}$. Here $f$ and $g$ belong to suitable classes of  h\"{o}lderian functions. The additional hypothesis that Baldi, Citti and Cupini \cite{BCC19} assume is  that the induced distribution on the boundary, generated by the vector fields tangent to the boundary that belongs to the distribution $V^1$, verifies the H\"{o}rmander condition.
However even in the simplest case of the Heisenberg group $\mathbb{H}^1$ this hypothesis is not verified. Another relevant paper concerning the $C^{1,\alpha}$ Schauder estimates in Carnot groups near a non-characteristic portion of the boundary of regularity $C^{1,\alpha}$ is \cite{MR3948989}. The full Schauder estimates have been subsequently obtained in the recent paper \cite{2022arXiv221012950B} where the authors obtain  estimates in $\Gamma^{k,\alpha}$ for $k\ge2$ near a $C^{k,\alpha}$ non-characteristic portion of the boundary in a Carnot group by means of a compactness method going back to seminal works of  Caffarelli (see e.g. \cite{MR1005611}).\footnote{A first version of the current paper was only considering the case of the first Heisenberg group. We decided in this new version to deal with all possible H-type groups; the paper \cite{2022arXiv221012950B} appeared while this work was in preparation.}

The aim of this work is to establish the Schauder estimates at the boundary away from the characteristic points for the Dirichlet problem
\begin{equation}
\label{eq:PP}
\begin{cases}
\Delta_{\GG} u= f & \quad \text{in} \quad \Omega\\
u= g & \quad \text{on} \quad \partial \Omega
\end{cases}
\end{equation}
by means of the \textit{double layer potential} in a group of Heisenberg type $\GG$. Here $\Omega \subset \GG$ is a bounded open set and $f\in C^{\alpha}(\bar{\Omega})$, $g \in C^{2,\alpha}(\partial \Omega)$. Since the function $f$ in \eqref{eq:PP} concerns only interior estimates, it is not restrictive to study the associated homogeneous problem $\Delta_{\GG} u=0$ in $\Omega$ and $ u=g$  on $\partial \Omega$. By the Green's representation formula the solution of the previous problem is given by
\[
u(x)=\int_{\partial \Omega} g(y) \escpr{ \nabla_{\GG} G(x,y), \nu_h(y)} d\sigma(y),
\]
where $\nabla_{\GG}$ is the horizontal gradient and  $G(x,y)$ is the Green's function such that  $G=0$ on $\partial \Omega $. Then we consider an harmonic approximation of  $u(x)$ given by $\mathcal{D}(g)(x)$ where  instead of  $\nabla_{\GG} G(x,y)$ we consider $\nabla_{\GG} \Gamma(x,y)$, where  $\Gamma$ is the fundamental solution for $\Delta_{\GG}$. We will call $\mathcal{D}(g)$ the \textit{double layer potential}. Clearly the harmonic function $\mathcal{D}(g)$ does not assume the boundary datum $g$, but  when $x \rightarrow \xi \in \partial \Omega$ and $\xi$ is a non-characteristic point, we get that the limit of  the double layer potential coincides  with $T(g)(\xi):=\frac{1}{2} g(\xi)+ K(g)(\xi)$, where $K$ is a singular operator from  $C^{2,\alpha}$ into $C^{2,\alpha}$. Jerison in \cite{Jerison1} pointed already the singularity of $K$ and chose to use instead the  single layer potential to derive Schauder estimates up to the boundary. On one hand in the Euclidean case the analogous of  $K$ is a compact operator if the boundary is smooth, thus $T$ is invertible, by the Fredholm alternative. Then,  we have that $\mathcal{D}(T^{-1}(g))$ is harmonic and assume the boundary datum. Hence the  Schauder estimates follow automatically by the  h\"{o}lder estimates  for $T$. On the other hand when the boundary   $\partial \Omega$ is  Lipschitz the operator $K$ is  singular  as well as in the sub-Riemannian case. However, Verchota in \cite{MR769382} inverted  the operator $T$ between $L^2$ classes on the Lipschitz boundary in an Euclidean domain.  He  showed that  the $L^2$ norm of  $K$ is small compared to $\tfrac{1}{2}$ extending  the previous result by Fabes, Jodeit, and Rivi\'ere \cite{MR501367} for $C^1$ domains in $\rr^n$. The technique developed in \cite{MR769382} was influenced by the previous seminal papers by Calder\'on \cite{MR466568} and  Coifman, McIntosh, and Meyer \cite{MR672839} which  established the $L^p$-boundedness of the Riesz transform and the double layer potential in this Lipschitz Euclidean setting. All these results are very well explained and (some of them) extended in the book \cite{KC94} by Kenig.  It is important to keep in mind that in this Euclidean setting, the double layer potential is singular since the domain is rough. Several extensions of the boundary layer technique have been developed over the last years in a variety of settings (see e.g. \cite{kim} and references therein for recent developments). 

In the present situation, as mentioned above, the situation is more dramatic since the double layer potential is singular {\sl even} on smooth domains. Fortunately in the present work we do not need to develop an  $L^2$ theory for singular integrals in the sub-Riemannian setting as Orponen and Villa did in \cite{2020arXiv200608293O}; instead we need to prove that $K$ has small $C^{2,\alpha}$ norm with respect to $\frac{1}{2}$ in order to use the continuity method developed by \cite{MR769382} (see also \cite[page 56]{KC94}). Crucial to our strategy is a reflection argument, special to the sub-Riemannian setting, which we believe will be proved to be useful for other problems when the lack of commutativity is critical.

We now describe our strategy in the case of the  upper half space $\{ x_1>0\}$ of the first Heisenberg group $\hh^1$ with coordinates $(x_1,x_2,x_3)$ in $\hh^1$. The boundary of  $\{ x_1>0\}$ is given by the intrinsic plane $\Pi=\{x_1=0\}$, so called in literature since it does not contain characteristic points. In this case the singular operator $K$ is an operator on $\Pi$ with convolution kernel $k$ given by \eqref{eq:kk}.
In order to  prove that $K$ has small $C^{2,\alpha}$ norm with respect to $\frac{1}{2}$ we use a surprising reflection technique in the Heisenberg group. Indeed the main obstacle in applying the reflection technique in the Heisenberg group  is the fact that if $u$ satisfies an equation, its reflection $u(-x_1, x_2, x_3)$ does not satisfy the 
same equation, because of the non commutation properties of $X_1$ and $X_2$. As a consequence, while the increments appearing in the kernel along the horizontal directions are the standard ones, $x_1- y_1$ and $ x_2 - y_2 $, in the third increment mixed variables show up
$$x_3 - y_3 - \frac{1}{2}(y_1 x_{2}-y_{2}x_1).$$
However  this increment, restricted to the boundary $x_1=y_1=0$, reduces to the standard one, 
and all variables decouple. This allows to apply a reflection technique, if we are interested in the limit, 
when the operators tend to the boundary. 
Hence we modify the operators on the whole space, removing the mixed term, which is not present in the 
limit, and is the cause of preventing reflection. Hence the symmetry of these new operators on the whole space yields that in the limit the $C^{2,\alpha}$ norm of $\tfrac{1}{2} I+ K$ and  
$-\tfrac{1}{2} I+ K$ coincide. Finally the continuity method allows to prove the invertibility of $\tfrac{1}{2} I+ K$, thus the solution of the Dirichlet problem on the half space is given by harmonic function $\mathcal{D}((\tfrac{1}{2} I+ K)^{-1}g)$. Once we obtain the invertibility of $\tfrac{1}{2} I+ K$ on the flat space  the general case for curved domains follows directly. Indeed flattening the boundary around a non-charatheristic point involves a compact operator $K_{\hat R}$ that does not affect the invertibility  $\tfrac{1}{2} I+ K+K_{\hat R}$. Hence the  Schauder estimates around a non-charatheristic point are a direct consequence of the H\"older estimates for the inverse of $\tfrac{1}{2} I+ K+K_{\hat R}$.

Even if the result in the present paper, i.e. the Schauder estimates at the boundary on $\hh^1$, is known since the works of Jerison and by now thanks to \cite{2022arXiv221012950B}, in the full generality of any Carnot groups, we want to emphasize  that it was not known that one could achieve such results via the double layer potential. We believe that our method, being independent of all the previous ones, will be proved to be useful for other problems and provide a new point of view on singular integrals in groups. 

In order to simplify the exposition, we concentrate in the first five sections  only on the case of  $\hh^1$, for which the computations are easier. In the last section{ \ref{sc:GHT} we show that the same technique provides the Schauder estimates  in the more general setting of $H$-type groups. We emphasize that for the  $C^{2,\alpha}$ estimates of the singular operator $K$ (Theorem \ref{th:Kcont} ) in the case of $\hh^1$ we provide a standard  proof based on \cite[Lemma 4.4]{GT} in order to make the exposition self-contained, whereas in the case of the $H$-type group $\GG$ (Theorem \ref{th:KcontH}) we used a deep result by Nagel and Stein  \cite{NagelStein79}}.

{To be more precise on the boundary $\partial \Omega$ the h\"older classes are defined by  the non-isotropic Folland-Stein h\"older classes $\Gamma^{2,\alpha}(\partial \Omega)$ introduced by Jerison \cite[Section 4]{Jerison1}, see also \cite{NagelStein79,MR657581}. Roughly speaking a function belongs to $\Gamma^{2,\alpha}(\partial \Omega)$ if at each scale $\delta$ we approximate the function by a second order polynomial in local coordinates on $\partial \Omega$ with an error that goes as $\delta^{2+\alpha}$, this idea in the Euclidean setting goes back to Campanato \cite{MR167862}.  We show that on the plane $\Pi=\{x_1=0\}$ the class $\Gamma^{2,\alpha}(\Pi)$ coincides with the classical sub-Riemannian h\"older space $C^{2,\alpha}(\Pi)$, that means that the second order  horizontal tangential derivatives and the first order vertical derivatives are h\"older continuous with respect to the induced distance on $\Pi$. The control on the first order vertical derivatives is crucial since on $\Pi$ we do not verify the H\"ormander rank condition such as in \cite{BCC19}, thus the vertical derivatives are not a priori commutators of vector fields that belong to the distribution.  To our knowledge the equivalence of these two classes of functions was known only when instead of the intrinsic plane $\Pi$ we consider the whole group $\GG$, see \cite[Theorem 5.3]{MR657581}. 
}

{Finally we point out that the Schauder estimates for general $H$-type groups, obtained in Section \ref{sc:GHT}, are not consequences of the results in  \cite{BCC19} (but of course follow from the results in \cite{2022arXiv221012950B}); indeed there are several examples of $H$-type groups, different from $\hh^1$, that does not satisfy the H\"ormander rank condition on $\Pi$, see for instance  Example \ref{eq:nothormander}. As is usual in layer potential methods, one needs to have a rather precise formula for the fundamental solution of the operator to be able to conclude. This is the main issue in our technique to deal with general Carnot groups.  
}

The paper is organized as follows. In Section \ref{sc:pre} the Heisenberg group, the fundamental solution for the sub-Laplacian and the H\"older classes  are introduced.
Section \ref{sc:dlp} deals with the double layer potential and its jump formulas.  In Section \ref{sc:invert} we provide the  invertibility of the double layer potential on the intrinsic plane by the reflection technique.  In Section \ref{sc:Schest} we show the local Schauder estimates around a non-characteristic point and the global Schauder estimates for  bounded domains without characteristic points. Examples of such domains are constructed in \cite{MR4219401}.
{ Finally in Section \ref{sc:GHT} we show that all the previous results hold in the more general setting of $H$-type groups.}
\section{Preliminaries}
\label{sc:pre}
The first Heisenberg group $\hh^1$ is an analytic, simply
connected  $3$-dimensional Lie group  such that its Lie algebra $\mathfrak{g}$ admits a stratification
\[
 \mathfrak{g}=V^1\oplus V^2, \quad [V^1,V^1]=V^2 \quad \text{and}  \quad   [V^1,V^{2}]=\{0\}.                                                                                                                                                                                                                                   \]
The stratification induces a natural notion of degree of a vector field
\[
\deg(X)=j  \quad \text{whenever} \quad X \in V_j,
\]
for $j=1,2$.
By \cite[Theorem 2.2.18]{BLU} the first Heisenberg group $\hh^1$ can be identified with the triple $(\mathbb{R}^3, \circ, \delta_{\lambda})$, where $\circ$ is the polynomial group law given by 
\[
y\circ x =\Bigg(y_1+ x_1, y_{2}+x_{2}, y_3+x_3 + \frac{1}{2} (y_1 x_{2}-y_{2}x_1)\Bigg),
\]
for any pair of points $x=(x_1,x_2,x_3)$, $y=(y_1,y_{2},y_3)$ in $\rr^{3}$ and  $\{\delta_{\lambda}\}_{\lambda>0}$ is a family of automorphisms of $(\rr^3, \circ)$ such that 
\[
\delta_{\lambda} (x_1, x_2, x_3)= ( \lambda x_1,  \lambda x_2, \lambda^{2} x_3 ).
\]
The homogenous dimension $Q$ is given by 
\[
Q:= \dim(V^1)+ 2 \dim(V^2)= 4 .
\]
We  call horizontal distribution the subspace $V^1$ and we choose the basis of left invariant vector fields 
\[
X_1=\partial_{x_1}- \dfrac{x_2}{2} \partial_{x_3}, \qquad X_2=\partial_{x_2}+ \dfrac{x_1}{2} \partial_{x_3}.
\] 
We denote by $\nabla_{\hh}$ the horizontal gradient 
$
\nabla_{\hh}= (X_1,X_2)
$
and by $\nabla=(\partial_1,\partial_2,\partial_3)$ the standard Euclidean gradient.  The sub-Laplacian operator is given by
\[
\Delta_{\hh}=  X_1^2+ X_2^2= \divv_{\hh}(\nabla_{\hh} ),
\]
where $\divv_{\hh}(\phi)= X_1(\phi_1)+X_2(\phi_2)$ for $\phi=\phi_1 X_1+\phi_2 X_2 \in V^{1}$. Is is well known (see \cite[Chapter 5]{BLU}) that the sub-Laplacian admits a unique fundamental solution $\hat{\Gamma} \in C^{\infty}(\rr^3 \smallsetminus \{0\})$, $\hat{\Gamma} \in L_{\text{loc}}^1(\rr^3)$, $\hat{\Gamma}(x) \to 0$ when  $x$  tends to infinity and such that 
\[
\int_{\rr^3} \hat{\Gamma}(x) \,  \Delta_{\hh}   \varphi(x) \,  dx = -\varphi(0) \quad \forall \varphi \in C^{\infty}(\rr^3).
\]
\begin{definition}
We call Gauge norm on $\hh^1$ a homogeneous symmetric norm $d$ smooth out of the origin and satisfying
\[
\Delta_{\hh} (d(x)^{2-Q})=0 \quad \forall x \ne 0 .
\]
\end{definition}
Following \cite{MR315267,MR1219650} a Gauge norm in $\hh^1$ is given by 
\[
|x|_{\hh}=((x_1^2+x_2^2)^2+ 16 x_3^2)^{\tfrac{1}{4}}.
\]
Therefore we have 
\[
\hat{\Gamma}(x)=(2 \pi)^{-1} |x|_{\hh}^{2-Q}= \tfrac{1}{2 \pi \Big( \big( x_1^2 + x_2^2\big)^2 + 
16 x_3^2 \Big)^{1/2}}.
\]
Finally we define the fundamental solution  $\Gamma(x, y) = \hat \Gamma(y^{-1} \circ x)$ that is given by 
\begin{equation}
\label{eq:fundsol}
\Gamma(x, y)=
\tfrac{1}{ 2 \pi \Big( \big( (x_1-y_1)^2 + (x_2-y_2)^2\big)^2 + 
16 \big(x_3 - y_3 - \frac{1}{2}  (y_1 x_{2}-y_{2}x_1) \big)^2 \Big)^{1/2}}
\end{equation}
and the Gauge distance $d(x,y)=|y^{-1} \circ x|_{\hh}$ for all $x,y \in \mathbb{H}^1$ .
The Gauge ball  center at  $x \in  \hh^1 $ of radius $r>0$ is given $B_r(x):=\{y \in \mathbb{H}^1 \ : \ d(x,y)< r \}$.
\begin{definition}
 Let $0<\alpha<1$, $\Omega\subset\mathbb{H}^1$ be an open set and $f: \Omega \rightarrow \rr$ be function on $\Omega$. We say that $f \in C^{\alpha}(\Omega)$ if there exists a positive constant $M$ such that for every 
 $x,y$ in $\Omega$ 
 \[
  |f(x)-f(y)|<M\ d^{\alpha}(x,y). 
 \]
 We set 
 \[
  \lVert f \rVert_{C^{\alpha}({\Omega})}=\sup_{x\ne y}\dfrac{|f(x)-f(y)|}{d^{\alpha}(x,y)}+ \sup_{x \in \Omega} |f(x)|.
 \]
 Iterating this definition, when $k>1$ we say that $f \in C^{k,\alpha}(\Omega)$ if $X_i u \in C^{k-1,\alpha}(\Omega)$ for all $i=1,2$.
\end{definition}

\subsection{Smooth domains, characteristic points and holder classes on the boundary}
\begin{definition}
\label{def:smoothboundary}
The set $\Omega$ is called a domain of class $C^{\infty}$ if for each $\xi \in \partial \Omega$ then there exists a neighborhood $U_{\xi}$ and a function $\psi_{\xi} \in C^{\infty} (U_{\xi})$ such that
\begin{align*}
U_{\xi} \cap \Omega &=\{ x \in U_{\xi} \ : \  \psi(x)<0  \}\\
U_{\xi} \cap \partial \Omega&=\{ x \in U_{\xi} \ : \  \psi(x)=0  \}.
\end{align*}
We say that $\xi$ in $\partial \Omega$ is a \textit{characteristic point} if $\nabla_{\mathbb{H}} \psi (\xi)=0$.
\end{definition}

Let $\xi, \eta$ in $\partial \Omega$, we define the induced distance $\hat{d}$ on $\partial \Omega$ by
\[
\hat{d}(\xi,\eta):=d(\xi,\eta),
\]
where $d$ is the Gauge distance in $\mathbb{H}^1$ and for $r>0$ we call $\hat B_r( \xi)$ the induced ball given by 
\[
\hat B_r( \xi)=B_r(\xi) \cap \partial \Omega,
\]
where $B_r(\xi)$ is a Gauge ball in $\hh^1$ centered at $\xi$.
\begin{definition}
Let $0<\alpha<1$. We say that a continuous function $f$ belongs to $C^{\alpha}(\partial \Omega)$ if  there exists a constant $C$ such that 
\[
\dfrac{|f(\xi)-f(\eta)|}{ \hat{d}(\xi, \eta)^{\alpha}} < C,
\]
for each $\xi, \eta$ in $\partial \Omega$. Then the holder semi-norm $[f]_{\alpha}$ is defined  by 
\[
[f]_{\alpha}= \sup_{\substack{\xi ,\eta  \in \partial \Omega \\ \xi \neq \eta} } \dfrac{|f(\xi)-f(\eta)|}{ \hat{d}(\xi, \eta)^{\alpha}}
\]
and  the holder norm is defined by
\[
\|f \| _{\alpha}= [f]_{\alpha}+ \sup_{\xi \in \partial \Omega} |f(\xi)|
\]
\end{definition}

\begin{definition} \label{def:CKalpha} Let $0<\alpha<1$ and $k \in \nn \cup \{0\}$. We say that a bounded function $f$ belongs to $\Gamma^{k,\alpha}(\partial \Omega)$ if for each $\xi \in \partial \Omega$ and $\delta>0$ there exist a polynomial $P_{\xi} (\eta)$ of degree $k$ in local coordinates on $\partial \Omega$  and a uniform constant $C$ such that 
\[
|f(\eta)-P_{\xi}(\eta)| \le C \delta^{k+\alpha}, \qquad \hat{d}(\eta,\xi) <\delta
\]
Then the H\"older semi-norm $[f]_{k,\alpha}$ is the least possible $C$ above $+$ the supremum of the coefficient of $P_{\xi}$ and  the holder norm is defined by
\[
\|f \| _{\Gamma^{k,\alpha}}=[f]_{k,\alpha}+ \sup_{\xi \in \partial \Omega} |f(\xi)|,
\]
\end{definition}
These classes are the non-isotropic Folland-Stein h\"older classes (see \cite{MR657581} or  Section $ 9$ of \cite{NagelStein79}) introduced by Jerison \cite[Section 4]{Jerison1}.

\subsection{Polynomial in local coordinates far from the characteristic points}
Let $\xi$ in $\partial \Omega$ be a non-characteristic point and $\psi$ be the defining function of the boundary, see Definition \ref{def:smoothboundary}. Then the horizontal normal to $\partial \Omega$ is defined by 
\[
\nu_h= \dfrac{\nabla_{\hh} \psi }{|\nabla_{\hh} \psi|}.
\]
Then there exists an orthonormal frame $Z, S$
 tangent to $\partial \Omega $  such that  $\deg(Z)=1$ and $\deg(S)=2$.
 Then we consider the exponential map 
 \begin{equation}
 \label{eq:expexp}
 \begin{aligned}
 (x_1,x_2,x_3)=\exp(v_1 \nu_h ) \, \circ \, \exp \left( v_2 Z\right) \, \circ \, \exp \left( v_3 S \right) (\xi)
 \end{aligned}
 \end{equation}
On the neighborhood $U \subset \hh^1$ of $\xi$  we consider the local coordinates $v=(v_1,v_2,v_3)$ given by the inverse map $\Xi_{\xi}$ of the exponential map defined in \eqref{eq:expexp}. In the literature, these coordinates are commonly called  exponential or canonical coordinates of the second kind, see \cite{Bellaiche}. In these new coordinates $\Xi_{\xi}(U\cap \partial \Omega) \subset \{v_1=0\}$.  Then we set $\hat{v}=(v_2,v_3)$ and  $J=(j_2,j_3)$
$$\deg(J)=j_2+2 \cdot j_3$$
and
\[
\hat{v}^{J}=v_2^{j_2} v_3^{j_3}.
\]
A polynomial of order $k$ in local coordinates on the boundary is given by 
\[
P(\hat{v})=\sum_{\deg(J) \le k} a_J \hat{v}^J,
\]
where $a_J$ are constants. Hence assuming that $\eta= \Xi_{\xi}^{-1} (\hat{v})$ we set $P_{\xi}(\eta):=P(\hat{v})$ in Definition \ref{def:CKalpha}.

\section{Double layer potential}
\label{sc:dlp}
Let $\Omega$ be a bounded smooth domain of  $\hh^1$. We consider the following Dirichlet problem
\begin{equation}
\begin{cases}
\Delta_{\mathbb{H}} u=0 & \text{in} \quad \Omega \\
u=g & \text{on} \quad \partial \Omega.
\end{cases}
\label{DP}
\end{equation}
Let $h_x(y)$ be an harmonic function in $\Omega$ such that 
\begin{equation}\label{harmonic}
h_x (\cdot) |_{\partial \Omega}= \Gamma(\cdot,x)|_{\partial \Omega},
\end{equation} then the Green function $G(x,y)$ is given by 
\[
G(x,y)=\Gamma(x,y)-h_x(y).
\]
Under the assumptions ($\partial \Omega$ smooth, negligible surface measure of the singular set and  uniform exterior ball property) by \cite{UL97} a solution of \eqref{DP} is 
\begin{equation}
\label{eq:u}
u(x)=\int_{\partial \Omega} g(y) \escpr{\nabla_{\hh}^{y}G(x,y), \nu(y)} d\sigma(y),
\end{equation}
where $\nu(y)$ is the unit normal at the point $y \in \partial \Omega$.
Following \cite{KC94} we consider the approximation of  \eqref{eq:u} given by
\begin{equation}
\label{eq:w}
\mathcal{D}(g)(x)= \int_{\partial \Omega} g(y) \escpr{\nabla_{\hh}^y \Gamma (x,y), \nu(y)} d\sigma(y),
\end{equation}
proposed  by C. Neumann in the classical setting. Clearly we have that  $\Delta_{\hh} \mathcal{D}(g)(x)=0$ for each $x \in \Omega$. 

\subsection{The jump formulas across an intrinsic plane}
Let $\Omega=\{x_1>0\} \subset \hh^1$ and $\partial  \Omega =\{x_1=0\}= \Pi$. Then the induced distance $\hat d$ is given by 
\begin{equation}
\hat d(\hat x,\hat y)=((x_2-y_2)^4+ 16 (x_3-y_3)^2)^{\tfrac{1}{4}}
\end{equation}
for each $\hat x =(x_2,x_3)$ and $\hat y =(y_2,y_3)$ in $\Pi$ and  the induced ball is given by 
\[
\hat B_r(\hat x)=\Big\{ (y_2,y_3) \in \Pi \ : \ \hat d(\hat x,\hat y)<r\Big\}.
\]
\begin{proposition}
\label{prop:K1Kplane}
Let $\Omega=\{x_1>0\} \subset \hh^1$ and $\partial  \Omega =\{x_1=0\}= \Pi$.  Then the double layer potential
$ \mathcal{D}(g)(x)$ is given by 
\begin{equation}
\label{eq:DgPi}
\mathcal{D}(g)(x)= K_1(g)(x) + K(g)(x)
\end{equation}
for $x\in \Omega$, where $K_1$ and $K$ are operators with kernels 
respectively $k_1$ and $k$ defined as
\begin{equation}\label{k1}k_1(x,\hat y)=
 \dfrac{1}{\pi} \frac{\big( x_1^2 + (x_2-y_2)^2\big) x_1 }
{\Big( \big( x_1^2 + (x_2-y_2)^2\big)^2 
+ 16 \big(x_3 - y_3 + \frac{1}{2}y_{2}x_1 \big)^2 \Big)^{3/2}}
\end{equation}
\begin{equation}\label{k}k(x,\hat y)=
-\dfrac{4}{\pi}\frac{  (x_2-y_2)(x_3 - y_3 - \frac{1}{2}y_{2}x_1 )}
{\Big( \big( x_1^2 + (x_2-y_2)^2\big)^2 
+ 16 \big(x_3 - y_3 + \frac{1}{2} y_{2}x_1 \big)^2 \Big)^{3/2}},
\end{equation}
where $\hat y=(y_2,y_3)$ and $(0,\hat y) \in \Pi$.
\end{proposition}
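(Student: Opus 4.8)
The plan is to compute $\mathcal{D}(g)(x)$ directly from its definition \eqref{eq:w} on the half-space $\Omega=\{x_1>0\}$, making all the structures on $\Pi=\{x_1=0\}$ explicit. First I would fix the geometry of the flat boundary: on $\Pi$ the Euclidean unit normal is $\nu(y)=(1,0,0)$ (pointing into $\Omega$, or its opposite; the sign will be fixed by the jump formula convention), the surface measure $d\sigma(y)$ is just $dy_2\,dy_3$, and a boundary point is $(0,y_2,y_3)$. Consequently $\langle \nabla_{\hh}^y\Gamma(x,y),\nu(y)\rangle$ reduces to the single component $X_1^y\Gamma(x,y)=\partial_{y_1}\Gamma(x,y)-\tfrac{y_2}{2}\partial_{y_3}\Gamma(x,y)$, evaluated at $y_1=0$.

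Next I would carry out the differentiation of the fundamental solution \eqref{eq:fundsol}. Writing $\Gamma(x,y)=\tfrac{1}{2\pi}\,F(x,y)^{-1/2}$ with
\[
F(x,y)=\big((x_1-y_1)^2+(x_2-y_2)^2\big)^2+16\big(x_3-y_3-\tfrac12(y_1x_2-y_2x_1)\big)^2,
\]
we have $\partial_{y_i}\Gamma=-\tfrac{1}{4\pi}F^{-3/2}\partial_{y_i}F$, so the kernel becomes $-\tfrac{1}{4\pi}F^{-3/2}\big(\partial_{y_1}F-\tfrac{y_2}{2}\partial_{y_3}F\big)$. A direct computation gives, at $y_1=0$,
\[
\partial_{y_1}F\big|_{y_1=0}=-4x_1\big(x_1^2+(x_2-y_2)^2\big)+16 x_2\big(x_3-y_3+\tfrac12 y_2 x_1\big),
\]
\[
\partial_{y_3}F\big|_{y_1=0}=-32\big(x_3-y_3+\tfrac12 y_2 x_1\big),
\]
(note that the argument of the last square in $F$, at $y_1=0$, is $x_3-y_3+\tfrac12 y_2 x_1$), and combining $\partial_{y_1}F-\tfrac{y_2}{2}\partial_{y_3}F$ the $x_2$ and $y_2$ contributions to the vertical term conspire: $16x_2+16y_2=16(x_2-(-y_2))$ — more precisely one gets $-4x_1\big(x_1^2+(x_2-y_2)^2\big)+16(x_2-y_2)\big(x_3-y_3+\tfrac12 y_2 x_1\big)$. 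The two summands produce, after dividing by $-4\pi F^{3/2}$, exactly the two kernels: the first summand yields $k_1$ (the term with the factor $x_1$ and numerator $\big(x_1^2+(x_2-y_2)^2\big)x_1$, with a $\tfrac1\pi$ and a sign that comes out positive), and the second summand yields $k$ (the term $-\tfrac{4}{\pi}(x_2-y_2)(x_3-y_3-\tfrac12 y_2 x_1)$ — here one must double-check that the numerator carries the ``$-\tfrac12 y_2x_1$'' increment rather than ``$+\tfrac12 y_2 x_1$''; this is precisely the mixed-variable subtlety the introduction highlights, and it is the only place where a sign/increment bookkeeping error is likely).

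The main obstacle is thus purely computational bookkeeping: tracking the non-commutative increment $x_3-y_3-\tfrac12(y_1x_2-y_2x_1)$ through the $y$-derivatives and the restriction $y_1=0$, keeping straight which occurrences become $x_3-y_3+\tfrac12 y_2x_1$ (inside $F$, hence in all denominators) and which remain $x_3-y_3-\tfrac12 y_2x_1$ (surviving in the numerator of $k$ after the $X_1^y$ combination), and fixing the overall sign coming from the orientation convention for $\nu$ on $\partial\{x_1>0\}$. Once the algebra is in place, the decomposition \eqref{eq:DgPi} with the stated kernels \eqref{k1}, \eqref{k} follows immediately by linearity of the integral in $g$, and no analytic input beyond differentiability of $\Gamma$ away from the diagonal is needed for the statement itself (integrability and jump behavior are treated separately).
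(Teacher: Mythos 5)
Your strategy is exactly the paper's: differentiate $\Gamma$ along $X_1$ in the $y$-variable, restrict to $y_1=0$, and split the numerator; the only organizational difference is that you compute $X_1^y\Gamma = \partial_{y_1}\Gamma - \tfrac{y_2}{2}\partial_{y_3}\Gamma$ directly, while the paper first writes $X_1^x\Gamma$ by left-invariance and then passes to $X_1^y\Gamma$ via the symmetry $\Gamma(x,y)=\Gamma(y,x)$. That is a fine and equivalent route. The problem is that the computation you wrote does not actually close, precisely at the spot you flagged as delicate. Writing $B=x_3-y_3-\tfrac12(y_1x_2-y_2x_1)$, one has $\partial_{y_1}B=-\tfrac{x_2}{2}$, hence
\[
\partial_{y_1}F\big|_{y_1=0}=-4x_1\big(x_1^2+(x_2-y_2)^2\big)\;-\;16\,x_2\big(x_3-y_3+\tfrac12 y_2 x_1\big),
\]
with a \emph{minus} sign in front of $16x_2$, not the plus sign you wrote. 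Your ``conspiring'' sentence then becomes internally contradictory: $16x_2+16y_2$ equals $16(x_2+y_2)$, not $16(x_2-y_2)$, yet the ``more precisely'' display asserts the latter. With the corrected sign, $-16x_2 B+16y_2 B=-16(x_2-y_2)B$, so
\[
\big(\partial_{y_1}F-\tfrac{y_2}{2}\partial_{y_3}F\big)\big|_{y_1=0}=-4x_1\big(x_1^2+(x_2-y_2)^2\big)-16(x_2-y_2)\big(x_3-y_3+\tfrac12 y_2 x_1\big),
\]
and dividing by $-4\pi F^{3/2}$ gives $k_1$ plus
\[
\frac{4}{\pi}\,\frac{(x_2-y_2)\big(x_3-y_3+\tfrac12 y_2 x_1\big)}{F^{3/2}}.
\]

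This matches the paper's intermediate formula \eqref{eq:X1y} but not the displayed kernel \eqref{k}, which carries an overall $-\tfrac{4}{\pi}$ and a $-\tfrac12 y_2x_1$ increment; \eqref{k} is a sign typo (it is inconsistent with \eqref{eq:X1y} one line above), and this is harmless downstream since Lemma \ref{lm:sypol2}, Lemma \ref{lm:coarea}, Theorem \ref{th:Kcont} and Proposition \ref{IKnorm} are insensitive to the sign of $k$, and the $\tfrac12 y_2 x_1$ term disappears in the limiting convolution kernel \eqref{eq:kk}. But a proof of the proposition cannot leave the sign ``to be double-checked''; the concrete gap in your write-up is the wrong sign on $\partial_{y_1}F$ and the self-contradictory combination step, which you must redo as above.
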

\begin{proof}
By left invariance an explicit computation shows that the derivative \eqref{eq:fundsol} with respect to $X^x_1$ is given by
\begin{align*}
&X^x_{1}(\Gamma(x, y)) = (X_{v_1}\Gamma)(y^{-1} x) =\\
&=
-\frac{ \big( (x_1-y_1)^2 + (x_2-y_2)^2\big) (x_1-y_1) - 
4 (x_2-y_2)\big(x_3 - y_3 - \frac{1}{2} (y_1 x_{2}-y_{2}x_1)\big)}
{ \pi \Big( \big( (x_1-y_1)^2 + (x_2-y_2)^2\big)^2 + 
16 \big(x_3 - y_3 - \frac{1}{2} (y_1 x_{2}-y_{2}x_1)\big)^2 \Big)^{3/2}}
\end{align*}
Since $\Gamma$ is symmetric we also have 
\begin{align*}
&X^y_{1} \Gamma(x,y) =\escpr{\nabla_{\hh}^y \Gamma(x,y), X_1^y}\\
&=
-\frac{ \big( (x_1-y_1)^2 + (x_2-y_2)^2\big) (y_1-x_1) - 
4 (y_2-x_2)\big(y_3 - x_3 - \frac{1}{2} (x_1 y_{2}-x_{2}y_1)\big)}
{ \pi \Big( \big( (x_1-y_1)^2 + (x_2-y_2)^2\big)^2 + 
16 \big(x_3 - y_3 - \frac{1}{2} (y_1 x_{2}-y_{2}x_1)\big)^2 \Big)^{3/2}}.
\end{align*}
Evaluating this derivative over the plane $\Pi=\{y_1=0\}$ for $x_1>0$ we get 
\begin{equation}
\label{eq:X1y}
\begin{aligned}
X^y_{1} \Gamma(x,(0,y_2,y_3)) &=
\frac{ \big( x_1^2 + (x_2-y_2)^2\big) x_1 - 
4 (x_2-y_2)\big(y_3 - x_3 - \frac{1}{2} x_1 y_{2}\big)}
{ \pi \Big( \big( x_1^2 + (x_2-y_2)^2\big)^2 + 
16 \big(x_3 - y_3 + \frac{1}{2} y_{2}x_1)\big)^2 \Big)^{3/2}}\\
&=k_1(x,\hat y)+k(x,\hat y)
\end{aligned}
\end{equation}
where $k_1$ and $k$ are defined in \eqref{k1} and \eqref{k}.
Integrating \eqref{eq:X1y} over the plane $\Pi$ and assuming $y_1=0$, and $x_1>0$
we get \eqref{eq:DgPi}.
\end{proof}

\begin{remark}
\label{rk:int=1}
Notice that for each $r>0$ it holds 
\begin{equation}
\label{eq:intbound1}
\int_{\partial B_{r}(x)}  \escpr{\nabla_{\hh} \Gamma(x,y) , \nu(y)} d \sigma(y)=1.
\end{equation}
Indeed, by the mean value formula for each open subset $O\subset \hh^1$ such that $x \in O$, for each $r>0$ such that $B_r(x) \subset O$ and for each harmonic function $\psi \in \mathcal{H}(O)$ we have 
\[
\psi(x)= \int_{\partial B_{r}(x)} \psi(y) \escpr{\nabla_{\hh} \Gamma(x,y) , \nu(y)} d \sigma(y).
\] 
In particular if we consider $\psi \equiv1$ in $O$ we obtain \eqref{eq:intbound1}.
\end{remark}

\begin{lemma}\label{ga}
Let $x_0=(0,\hat x_0) \in \Pi$, $R>0$ and  $\hat B_R(\hat x_0)= \{\hat y \in \Pi \: \ \hat d (\hat x_0, \hat y)\leq R\} \subset \Pi$. Then the integral 
$$
\int_{ \hat B_R(\hat x_0)}  \escpr{\nabla_{\hh}^y \Gamma (x,(0, \hat y)) , X_1^y(y)} d \hat y
$$
is well defined if the first component $x_1$ of $x$ satisfies $x_1>0$ and tends to $1/2$ as $x\to x_0$.
\end{lemma}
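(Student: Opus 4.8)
The well-definedness for $x_1>0$ is immediate: by \eqref{eq:fundsol} the denominator of $\Gamma(x,(0,\hat y))$ stays bounded below by $x_1^4$ on the compact set $\hat B_R(\hat x_0)$, so the integrand $\langle\nabla_{\hh}^y\Gamma(x,(0,\hat y)),X_1^y\rangle=X_1^y\Gamma(x,(0,\hat y))$ is smooth and bounded there. The substance is the limit, which I would reduce to an explicit two-dimensional computation.

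By \eqref{eq:X1y} the integrand equals $k_1(x,\hat y)+k(x,\hat y)$. I would then perform on $\Pi$ the affine, unit-Jacobian change of variables $u=x_2-y_2$, $v=x_3-y_3+\tfrac12\,x_1 y_2$, legitimate precisely because $y_1=0$ --- this is the decoupling of the variables announced in the introduction. In these coordinates the common denominator of $k_1,k$ becomes $\big((x_1^2+u^2)^2+16v^2\big)^{3/2}$, so that, writing $D(x)\subset\rr^2$ for the image of $\hat B_R(\hat x_0)$, the integral becomes $\int_{D(x)}(\tilde k_1+\tilde k)\,du\,dv$ with
\[
\tilde k_1=\frac{x_1\,(x_1^2+u^2)}{\pi\,\big((x_1^2+u^2)^2+16v^2\big)^{3/2}}\ \geq 0,\qquad \tilde k=\frac{4uv}{\pi\,\big((x_1^2+u^2)^2+16v^2\big)^{3/2}},
\]
the latter \emph{odd in $v$}. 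The key point is that $\tilde k_1\,du\,dv$ is an approximate identity of mass $\tfrac12$: integrating first in $v$ (using $\int_{\rr}(A^2+16v^2)^{-3/2}\,dv=\tfrac1{2A^2}$) and then in $u$ (using $\int_{\rr}(x_1^2+u^2)^{-1}\,du=\pi/x_1$) gives $\int_{\rr^2}\tilde k_1\,du\,dv=\tfrac12$ for every $x_1>0$; moreover $\tilde k_1\to0$ pointwise on $\rr^2\setminus\{0\}$ as $x_1\to0^+$, while $(u,v)=0$ corresponds to $\hat y=(x_2,\,x_3+\tfrac12 x_1 x_2)\to\hat x_0$. Finally $D(x)\to D_0:=\{u^4+16v^2<R^4\}$ as $x\to x_0$ in the sense that $|D(x)\triangle D_0|\to0$, and $D_0$ is a neighborhood of the origin symmetric under $v\mapsto -v$.

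To conclude I would treat the two kernels separately. For $\tilde k_1$: $\int_{D(x)}\tilde k_1=\tfrac12-\int_{\rr^2\setminus D(x)}\tilde k_1$, and for $x$ close to $x_0$ the set $\rr^2\setminus D(x)$ lies in a fixed region $\{u^4+16v^2\geq c\}$ bounded away from the origin, on which $\tilde k_1\leq\tfrac{x_1}{\pi}(x_1^2+u^2)(u^4+16v^2)^{-3/2}$; since $(1+u^2)(u^4+16v^2)^{-3/2}$ is integrable over $\{u^4+16v^2\geq c\}$ (elementary in parabolic polar coordinates, the relevant homogeneous dimension being $3$), this tail is $\leq C\,x_1\to0$. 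For $\tilde k$: oddness in $v$ gives $\int_{D_0}\tilde k=0$, so $\big|\int_{D(x)}\tilde k\big|\leq\int_{D(x)\triangle D_0}|\tilde k|$; since $R>0$ the symmetric difference stays, for $x$ near $x_0$, in a compact set avoiding the origin on which $|\tilde k|\leq 4|uv|\,\pi^{-1}(u^4+16v^2)^{-3/2}$ is bounded by some $C_R$, whence $\big|\int_{D(x)}\tilde k\big|\leq C_R\,|D(x)\triangle D_0|\to0$. Adding the two contributions yields the limit $\tfrac12$.

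The only genuinely delicate point --- the step I expect to be the main obstacle --- is the interchange of $\lim_{x\to x_0}$ with the localized integral near the incipient singularity $\hat y=\hat x_0$: $\tilde k_1$ is not uniformly integrable as $x_1\to0^+$ (its mass stays $\tfrac12$ but concentrates at a point) and $\tilde k$ is only conditionally convergent there. The parabolic change of variables is precisely what tames both, by turning $\tilde k_1$ into a genuine approximate identity and exhibiting the $v$-parity of $\tilde k$; the remaining estimates are routine manipulations with parabolic homogeneity.
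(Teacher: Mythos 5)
Your argument is correct, and it is genuinely different from the paper's. The paper deduces the limiting value $\tfrac12$ from the divergence theorem applied to the harmonic function $\Gamma(x^n,\cdot)$ on the half-ball $\{x_1>0\}\cap B_R(x_0)\smallsetminus B(x^n,\eps_n)$: the small-sphere term contributes $1$ via the mean value identity \eqref{eq:intbound1}, and the hemispherical term is asserted to tend to $\tfrac12$, a step that implicitly relies on a reflection symmetry of $\Gamma(x_0,\cdot)$ across $\Pi$. You instead make everything explicit at the level of the integrand: the $y_1=0$, unit-Jacobian change of variables $(u,v)=(x_2-y_2,\,x_3-y_3+\tfrac12 x_1 y_2)$ decouples the coordinates, after which $\tilde k_1$ becomes a genuine approximate identity of total mass $\tfrac12$ (your computation $\int_{\rr^2}\tilde k_1\,du\,dv=\tfrac12$ is the exact analogue of the classical Poisson-kernel normalization for the half-space), and $\tilde k$ becomes odd in $v$, hence contributes $o(1)$ over the $v$-symmetric limiting domain $D_0$. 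What you gain is a self-contained, fully explicit proof that does not invoke the mean-value property nor the unstated symmetry of the hemispherical boundary integral; what you give up is the structural, formula-free character of the paper's argument, which ports verbatim to general $H$-type groups in Lemma~\ref{gaHT} without ever needing to evaluate a one-dimensional integral. A minor remark: your expression $\tilde k=\frac{4uv}{\pi((x_1^2+u^2)^2+16v^2)^{3/2}}$ is the one consistent with \eqref{eq:X1y}; the displayed kernel \eqref{k} in the paper carries a sign typo in its numerator (the $-\tfrac12 y_2 x_1$ and the leading $-\tfrac{4}{\pi}$ are not simultaneously compatible with \eqref{eq:X1y}), which is immaterial here since only the oddness in $v$ is used, but you were right to work from \eqref{eq:X1y} directly rather than from \eqref{k} as written.
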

\begin{proof}
Let $\{x^n\}_{n \in \nn}$ be a sequence of points in $\Omega=\{x_1>0\}$ converging  to $x_0$ as $n \to +\infty$ and $\eps_n>0$ small enough such that $B(x^n,\eps_n) \subset \Omega$ for each $n \in \nn$. Then we consider the bounded domain
$$\Omega_{n}^R= \{x_1 >0\} \cap B_R(x_0) \smallsetminus B(x^n,\eps_n).$$
By the divergence theorem  for each $n \in \nn $ we have 
\begin{equation}
\label{eq:2ballbound}
\begin{aligned}
0&= \int_{\Omega_{n}^R}  \Delta_{\hh} \Gamma (x^n, y) dy= \int_{\partial \Omega_n^R} \escpr{\nabla_{\hh}^y \Gamma(x_n,y), \nu_h(y)} d\sigma(y)\\
&= \int_{\partial B_R (x_0) \cap \{x_1 >0\}} \escpr{\nabla_{\hh}^y \Gamma(x_n,y), \nu (y)} d\sigma(y)\\
& \quad + \int_{\Pi \cap B_R(x_0)} \escpr{\nabla_{\hh}^y \Gamma(x_n,y), \nu (y)} d\sigma(y)\\
&\quad   -\int_{\partial B(x^n,\eps_n)} \escpr{\nabla_{\hh}^y \Gamma(x_n,y), \nu(y)} d\sigma(y).
\end{aligned}
\end{equation}
For each $n \in \nn$ the ball $B(x^n,\eps_n)$ is contained in $\{x_1>0\}$ thus by Remark \ref{rk:int=1} we get 
\[
\int_{\partial B(x^n,\eps_n)} \escpr{\nabla_{\hh}^y \Gamma(x_n,y), \nu_h(y)} d\sigma(y)=1.
\]
Noticing that $\Pi \cap B_R(x_0)= \hat B_R( \hat x_0)$ and rearranging terms in  \eqref{eq:2ballbound} we get
\[
\int_{\hat B_R( \hat x_0)} \escpr{\nabla_{\hh}^y \Gamma(x_n,y), \nu (y)} d\sigma(y)= 1- \int_{\partial B_R (x_0) \cap \{x_1 >0\}} \escpr{\nabla_{\hh}^y \Gamma(x_n,y), \nu (y)} d\sigma(y).
\]
Letting $n \to + \infty$ the left hand side of the previous equality converges to 
\[
1- \int_{\partial B_R (x_0) \cap \{x_1 >0\}} \escpr{\nabla_{\hh}^y \Gamma(x_0,y), \nu (y)} d\sigma(y)=\dfrac{1}{2},
\]
since we  only consider half of the integral equation \eqref{eq:intbound1}.
\end{proof}

The operator $K_1$ is totally degenerate while restricted 
to $\Pi$, so that we can not restrict it to functions defined on $\Pi$; however we 
can compute the limit from the interior of the set.

\begin{proposition}\label{frompositive}
Let  $g$ be  a Lipschitz compact supported function in 
$\Pi$ and $x_0$ be a point in $\Pi$.  
For $x \in \hh^1 \smallsetminus \Pi$ we consider
$$
K_1(g) (x) = \int_{\Pi} k_1(x,y) g(y) d\sigma(y).
$$
Then we  have
\begin{align*}
 &K_1(g) (x) \to \frac{1}{2} g(x_0) \quad \text{ as } x\to x_0^+,\\
 &K_1(g) (x) \to -\frac{1}{2} g(x_0) \quad  \text{ as } x\to x_0^-,
\end{align*}
so that $(K_1)^+=\tfrac{1}{2}\text{Id}$ while restricted to $\Pi$ and $(K_1)^-= - \frac{1}{2} \text{Id}$ while restricted to $\Pi$.
\end{proposition}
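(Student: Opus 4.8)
The statement is an approximate‑identity result for the family of kernels $k_1(x,\cdot)$, $x\in\hh^1\smallsetminus\Pi$, and I would prove it along the classical lines: show that $k_1(x,\cdot)\,d\sigma$ has constant sign, that its total mass over $\Pi$ equals $\tfrac12$ or $-\tfrac12$ according to the sign of $x_1$, and that it concentrates at $x_0$ as $x\to x_0$; the conclusion then follows from the continuity of $g$ at $x_0$ and its boundedness (both guaranteed by the Lipschitz and compact support hypotheses). Writing $x_0=(0,\hat x_0)$ with $\hat x_0\in\Pi$, I would start from the decomposition
\[
K_1(g)(x)=g(\hat x_0)\int_\Pi k_1(x,\hat y)\,d\sigma(\hat y)+\int_\Pi k_1(x,\hat y)\big(g(\hat y)-g(\hat x_0)\big)\,d\sigma(\hat y),
\]
and treat the two terms separately.

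For the first term I would compute $\int_\Pi k_1(x,\hat y)\,d\sigma(\hat y)$ \emph{exactly}. With $d\sigma=dy_2\,dy_3$ on $\Pi$, performing the change of variables $t=x_3-y_3+\tfrac12 y_2 x_1$ in $y_3$ (Jacobian $1$ for each fixed $y_2$) and then $s=y_2-x_2$, the integral splits as
\begin{align*}
\int_\Pi k_1(x,\hat y)\,d\sigma(\hat y)
&=\frac1\pi\int_\rr (x_1^2+s^2)\,x_1\Big(\int_\rr\frac{dt}{\big((x_1^2+s^2)^2+16t^2\big)^{3/2}}\Big)\,ds\\
&=\frac{x_1}{2\pi}\int_\rr\frac{ds}{x_1^2+s^2}=\frac12\,\mathrm{sgn}(x_1),
\end{align*}
where I used $\int_\rr(A^2+16t^2)^{-3/2}\,dt=\tfrac1{2A^2}$ with $A=x_1^2+s^2$. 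Hence this term equals $\tfrac12 g(\hat x_0)$ when $x_1>0$ and $-\tfrac12 g(\hat x_0)$ when $x_1<0$, for \emph{every} such $x$. (One can also recover the limiting value $\tfrac12$ on each side from Lemma~\ref{ga} and Remark~\ref{rk:int=1}, but one would still have to dispose of the $k$‑term there, so the direct computation is cleaner.) Since in \eqref{k1} every factor besides $x_1$ is manifestly nonnegative, $k_1(x,\cdot)$ has the sign of $x_1$, so this computation also gives the uniform bound $\int_\Pi|k_1(x,\hat y)|\,d\sigma(\hat y)=\tfrac12$.

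For the error term I would use the pointwise estimate
\[
|k_1(x,\hat y)|\le\frac{|x_1|}{\pi\,d(x,(0,\hat y))^4},
\]
which follows from \eqref{k1} on noting that $x_1^2+(x_2-y_2)^2\le d(x,(0,\hat y))^2$ while the denominator of $k_1$ is $d(x,(0,\hat y))^6$ (recall that $\big(x_1^2+(x_2-y_2)^2\big)^2+16\big(x_3-y_3+\tfrac12 y_2 x_1\big)^2=d(x,(0,\hat y))^4$ by left invariance). If $d(x,x_0)\le\rho/2$, then for $\hat y$ with $\hat d(\hat x_0,\hat y)\ge\rho$ the triangle inequality for the Gauge distance gives $d(x,(0,\hat y))\ge\hat d(\hat x_0,\hat y)-d(x,x_0)\ge\tfrac12\hat d(\hat x_0,\hat y)$, whence $|k_1(x,\hat y)|\le 16|x_1|/(\pi\,\hat d(\hat x_0,\hat y)^4)$; integrating and using that $(\Pi,\hat d,d\sigma)$ is Ahlfors $3$‑regular one obtains
\[
\int_{\Pi\smallsetminus\hat B_\rho(\hat x_0)}|k_1(x,\hat y)|\,d\sigma(\hat y)\le\frac{C|x_1|}{\rho},
\]
which tends to $0$ as $x\to x_0$.

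Finally I would assemble the pieces. For $x_1>0$ the first term is exactly $\tfrac12 g(\hat x_0)$, so $K_1(g)(x)-\tfrac12 g(x_0)$ equals the error term, and splitting the latter over $\hat B_\rho(\hat x_0)$ and its complement and using the two displayed bounds gives
\[
\Big|K_1(g)(x)-\tfrac12 g(x_0)\Big|\le\tfrac12\sup_{\hat B_\rho(\hat x_0)}|g-g(\hat x_0)|+2\|g\|_{L^\infty}\,\frac{C|x_1|}{\rho};
\]
letting first $x\to x_0$ and then $\rho\to0$ (using continuity of $g$ at $\hat x_0$) yields $K_1(g)(x)\to\tfrac12 g(x_0)$, and the case $x\to x_0^-$ is identical with $k_1\le 0$ and total mass $-\tfrac12$. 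The only step requiring genuine care is the exact evaluation of $\int_\Pi k_1(x,\cdot)\,d\sigma$ — it is this identity, rather than a mere limit, that makes the decomposition above clean — together with keeping track of the Gauge triangle inequality in the concentration estimate; everything else is the standard approximate‑identity manipulation, which here causes no difficulty thanks to the good sign of $k_1$ and the compact support of $g$.
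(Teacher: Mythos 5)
Your proof is correct, and it takes a genuinely different and in fact cleaner route than the paper's. The paper decomposes $K_1(g)(x)$ by subtracting $g(x)$, sends the Lipschitz remainder to zero, and then computes the mass term indirectly: it invokes Lemma~\ref{ga} (whose proof is the divergence theorem applied to $\Gamma(x_n,\cdot)$ over $\{x_1>0\}\cap B_R(x_0)\setminus B(x^n,\eps_n)$, which gives the limiting mass of $k_1+k$ to be $\tfrac12$) and then uses the oddness of $k$ restricted to $\Pi$ (Lemma~\ref{lm:sypol2}) to eliminate the $k$-contribution, so the paper only obtains the value of $\int k_1\,d\sigma$ in the limit $x\to x_0^\pm$. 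You instead compute $\int_\Pi k_1(x,\cdot)\,d\sigma=\tfrac12\,\mathrm{sgn}(x_1)$ \emph{exactly} for every $x\notin\Pi$ by a Fubini/change-of-variables argument (the substitution $t=x_3-y_3+\tfrac12 y_2 x_1$ decouples the variables, and then two elementary one-dimensional integrals finish it). This buys you a sharper statement, avoids the auxiliary divergence-theorem lemma and the symmetry lemma entirely, and gives directly the uniform $L^1(\Pi)$ bound and the good sign of $k_1$, making the remainder estimate a textbook approximate-identity argument. One small caveat worth flagging: the Gauge distance in $\hh^1$ is in general only a quasi-metric, so where you invoke ``the triangle inequality for the Gauge distance'' you should either cite the fact that the Cygan--Kor\'anyi gauge is a genuine norm on $\hh^1$, or simply weaken to a quasi-triangle inequality with a fixed constant -- the concentration estimate and hence the conclusion are unaffected, since only the behavior $d(x,(0,\hat y))\gtrsim\hat d(\hat x_0,\hat y)$ away from a shrinking ball is needed.
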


\begin{proof}
Let $R>0$ big enough such that  $\text{supp}(g)  \subset \hat B_R (\hat x_0)$.
Let us assume that $x=(x_1, \hat x)$, $x_1>0$ and 
\begin{align*}
K_1(g) (x) =& \int_{\Pi} k_1(x,y) g(y) d\sigma(y) =\int_{\hat B_R (\hat x_0)} k_1(x,y) (g(y) - g(x))d\sigma(y)\\
& + g(x) \int_{\hat B_R (\hat x_0)} k_1(x,y) d\sigma(y).
\end{align*}
On one hand we have
\begin{align*}
\left|\int_{\hat B_R (\hat x_0)} k_1(x,y) (g(y) - g(x)) d\sigma(y) \right|&\leq L \int_{\hat B_R (\hat x_0)} k_1(x,y) d(y, x)d\sigma(y)\\
&\leq L \int_{\hat B_R (\hat x_0)} \sqrt{x_1} d(y, x)^{-5/2}d\sigma(y) \to 0,
\end{align*}
$\text{  as }x \to x_0$ and where $L$ is the Lipschitz constant of $g$.
On the other hand by Lemma \ref{ga} we have  
\begin{align*}
&g(x) \int_{\hat B_R (\hat x_0)} k_1(x,y) d\sigma(y) =g(x)\int_{\hat B_R (\hat x_0)} (k_1(x,y)+ k(x,y) ) d\sigma(y) +\\
& \, -g(x) \int_{\hat B_R (\hat x_0)}  k(x,y) d\sigma(y) 
\xrightarrow[x \to x_0^+] {} \frac{1}{2} g(x_0)- g(x_0) \int_{\hat B_R (\hat x_0)}  k(x_0,y) d\sigma(y)=\frac{1}{2} g(x_0)
\end{align*}
by symmetry of the kernel $k$ restricted to $\Pi$, see Lemma \ref{lm:sypol2}.
Finally when $x_1<0$ the kernel  $k_1$ defined \eqref{k1} has the same sign of $x_1$ , then  $-k_1$ and $-x_1$ are positive and by Lemma \ref{ga}  we have 
\begin{align*}
&-g(x) \int_{\hat B_R (\hat x_0)} -k_1(x,y) d\sigma(y) =-g(x)\int_{\hat B_R (\hat x_0)} (-k_1(x,y)+ k(x,y) ) d\sigma(y) +\\
& \quad -g(x) \int_{\hat B_R (\hat x_0)}  k(x,y) d\sigma(y) 
\xrightarrow[(-x_1,x_2,x_3) \to x_0^+] {} -\frac{1}{2} g(x_0).
\qedhere
\end{align*}
\end{proof}

\begin{definition}
\label{def:kk}
As $x \to x_0^{\pm}$ the kernel $k(x,\hat y)$ defined in \eqref{k} converges to  the convolution kernel
\begin{equation}
\label{eq:kk}
k(\hat x- \hat y )=-
\dfrac{4}{\pi} \frac{  (x_2-y_2) \, (x_3 - y_3) }
{\Big( \big( x_2-y_2\big)^4 + 
16 \big(x_3 - y_3 \big)^2 \Big)^{3/2}}.
\end{equation}
Thus, if $g$ is a continuous compactly supported function in $\Pi$ the operator $K(g)$ converges to
\[
\int_{\Pi} k(\hat x- \hat y ) g(\hat y) d \sigma(y),
\]
that with an abuse of notation we also denoted by $K(g)$. 
\end{definition}

Hence the analogue of \cite[Theorem 4.4]{MR3600064} in this setting is the following
\begin{theorem}
Let $g$ be a Lipschitz compacty supported function in 
$\Pi$ and $x_0$ be a point in $\Pi$. Let $\mathcal{D}(g)$ be the double layer potential  defined in \eqref{eq:DgPi},  then the limits of $\mathcal{D}(g)(x)$ when $x$ tends to $x_0^+$ for $x \in \{x_1>0\}$ and when $x$ tends to $x_0^-$ for $x \in \{x_1<0\}$ exist. Moreover the limits verify  the following relations
\begin{align*}
 &\lim_{ x \to x_0^+} \mathcal{D}(g)(x)= \tfrac{1}{2} g(x_0)+ Kf(x_0) & \text{if} \quad x \in  \{x_1>0\}\\
& \lim_{ x \to x_0^-} \mathcal{D}(g)(x)= -\tfrac{1}{2} g(x_0)+ Kf(x_0) & \text{if} \quad x \in  \{x_1<0\},
\end{align*}
where $K$ is the operator with convolution kernel $k$ defined in \eqref{eq:kk}.
\end{theorem}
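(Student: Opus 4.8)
The plan is to obtain the jump formulas for $\mathcal{D}(g)$ by adding the jump formulas already established for its two constituents. Fix $x_0=(0,\hat x_0)\in\Pi$ and $R>0$ with $\mathrm{supp}(g)\subset\hat B_R(\hat x_0)$. By Proposition~\ref{prop:K1Kplane}, whose computation does not use the sign of $x_1$, we have $\mathcal{D}(g)(x)=K_1(g)(x)+K(g)(x)$ for every $x\in\hh^1\smallsetminus\Pi$, so it suffices to pass to the limit in each term separately. The contribution of the totally degenerate part is exactly Proposition~\ref{frompositive}: $K_1(g)(x)\to\tfrac12 g(x_0)$ as $x\to x_0^+$ inside $\{x_1>0\}$, and $K_1(g)(x)\to-\tfrac12 g(x_0)$ as $x\to x_0^-$ inside $\{x_1<0\}$.

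It remains to show $K(g)(x)\to Kg(x_0):=\int_\Pi k(\hat x_0-\hat y)\,g(\hat y)\,d\sigma(y)$, the operator with convolution kernel \eqref{eq:kk}, as $x\to x_0$ from either side. Since $g$ is Lipschitz with constant $L$ and compactly supported this limiting integral makes sense: it equals the absolutely convergent integral $\int_{\hat B_R(\hat x_0)} k(\hat x_0-\hat y)\,(g(\hat y)-g(\hat x_0))\,d\sigma(y)$, because $|k(\hat x_0-\hat y)|\lesssim\hat d(\hat x_0,\hat y)^{-3}$ while $|g(\hat y)-g(\hat x_0)|\le L\,\hat d(\hat x_0,\hat y)$, so the integrand is $O(\hat d(\hat x_0,\hat y)^{-2})$, integrable for the $3$-homogeneous surface measure on $\Pi$. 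For $x\notin\Pi$ I would split
\[
K(g)(x)=\int_{\hat B_R(\hat x_0)} k(x,\hat y)\,(g(\hat y)-g(\hat x_0))\,d\sigma(y)+g(\hat x_0)\int_{\hat B_R(\hat x_0)} k(x,\hat y)\,d\sigma(y).
\]
In the first integral $k(x,\hat y)\to k(\hat x_0-\hat y)$ pointwise for $\hat y\ne\hat x_0$, and, separating $\hat B_R(\hat x_0)$ into a small fixed ball around $\hat x_0$ and its complement, one has a dominating function $\lesssim\hat d(\hat x_0,\hat y)^{-2}\in L^1$ uniform for $x$ near $x_0$; hence this term tends to $Kg(x_0)$ by dominated convergence. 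In the second integral one has $\int_{\hat B_R(\hat x_0)} k(x,\hat y)\,d\sigma(y)\to\int_{\hat B_R(\hat x_0)} k(x_0,\hat y)\,d\sigma(y)=0$ by the symmetry of $k$ restricted to $\Pi$ (Lemma~\ref{lm:sypol2}), the same cancellation already used in the proof of Proposition~\ref{frompositive}; so this term tends to $0$. This is precisely the convergence recorded in Definition~\ref{def:kk}, and the limit is the same from both sides because $k$ carries no jump.

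Adding the two limits yields $\lim_{x\to x_0^+}\mathcal{D}(g)(x)=\tfrac12 g(x_0)+Kg(x_0)$ for $x\in\{x_1>0\}$ and $\lim_{x\to x_0^-}\mathcal{D}(g)(x)=-\tfrac12 g(x_0)+Kg(x_0)$ for $x\in\{x_1<0\}$, which is the assertion. The main obstacle is the convergence of $K(g)$: the kernel $k$ has critical order $\hat d^{-3}$ for the $3$-dimensional homogeneous structure of $\Pi$, so neither $K(g)(x)$ nor its candidate limit converges absolutely on its own. The argument works only because the Lipschitz cancellation $g(\hat y)-g(\hat x_0)$ gains one power of $\hat d$, bringing the integrand into $L^1$, and because the oddness of $k$ on $\Pi$ annihilates the residual term $g(\hat x_0)\int k(x_0,\hat y)\,d\sigma$; the technical heart is producing, via the splitting into a fixed ball and its complement, a dominating function uniform as $x\to x_0$.
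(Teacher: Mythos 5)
Your decomposition $\mathcal{D}(g)=K_1(g)+K(g)$, the jump of $K_1$ via Proposition~\ref{frompositive}, and the splitting $K(g)(x)=\int k(x,\hat y)(g(\hat y)-g(\hat x_0))\,d\sigma+g(\hat x_0)\int k(x,\hat y)\,d\sigma$ are exactly what the paper's (very terse) proof invokes through Proposition~\ref{frompositive} and Definition~\ref{def:kk}. But the dominated convergence step you use to handle the first integral contains a genuine gap, and this gap is in fact present (unacknowledged) in the paper's own Definition~\ref{def:kk}, which simply asserts that pointwise convergence of the kernel entails convergence of the operator.

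The problem is the claimed uniform domination $|k(x,\hat y)(g(\hat y)-g(\hat x_0))|\lesssim\hat d(\hat x_0,\hat y)^{-2}$ for $x$ near $x_0$, which requires $|k(x,\hat y)|\lesssim\hat d(\hat x_0,\hat y)^{-3}$. Writing $u=x_2-y_2$ and $v=x_3-y_3+\tfrac12 y_2x_1$, one has $k(x,\hat y)=\tfrac{4uv}{\pi\left((x_1^2+u^2)^2+16v^2\right)^{3/2}}$, so $|k(x,\hat y)|\sim d\bigl(x,(0,\hat y)\bigr)^{-3}$ and the \emph{near-singularity} of $k(x,\cdot)$ sits at $u=v=0$, i.e.\ at $\pi(x):=(x_2,\,x_3+\tfrac12 x_2x_1)\in\Pi$. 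This point differs from $\hat x_0$: one computes $\hat d(\hat x_0,\pi(x))\sim d(x,x_0)$, which goes to $0$ but is strictly positive, and in the shell $\hat d(\pi(x),\hat y)\sim|x_1|$ the kernel is of size $|x_1|^{-3}$ while $\hat d(\hat x_0,\hat y)^{-3}$ is only of size $d(x,x_0)^{-3}\ll|x_1|^{-3}$. Concretely, take $x_0=(0,1,0)$, $x=(t,1,0)$ and $\hat y=(1-t,\tfrac{t}{2}-\tfrac{3t^2}{2})$, so that $u=t$, $v=t^2$: then $|k(x,\hat y)|\sim t^{-3}$ while $\hat d(\hat x_0,\hat y)\sim\sqrt{t}$, so $|k(x,\hat y)(g(\hat y)-g(\hat x_0))|$ can be of order $t^{-5/2}$, far exceeding the proposed majorant $\hat d^{-2}\sim t^{-1}$. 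So no $L^1$ dominating function of the form you describe exists, and the DCT argument as written does not close. The same issue infects your treatment of the second term, where the passage from $\int_{\hat B_R}k(x,\hat y)\,d\sigma$ to the limit $\int_{\hat B_R}k(\hat x_0-\hat y)\,d\sigma=0$ again relies on an unjustified interchange of limit and integral. A correct proof must exploit the oddness of $k$ (and the Lipschitz increment) \emph{around the migrating center} $\pi(x)$, not around $\hat x_0$, splitting off a ball of $\hat d$-radius $\sim d(x,x_0)$ around $\hat x_0$ (respectively $\pi(x)$) and estimating its contribution directly rather than by domination.
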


\begin{proof}
By Propositions \ref{frompositive} and Definition \ref{def:kk} we obtain 
$$\mathcal{D}(g)(x) \to (\frac{1}{2} I +K)(g)(x_0) $$  
in the limit from positive values of $x_1$, while
$$\mathcal{D}(g)(x) \to  (- \frac{1}{2} I +K)(g)(x_0) $$  
in the limit from negative values of $x_1$.
\end{proof}
\section{Invertibility of the double layer potential  on the intrinsic plane}
\label{sc:invert}
\subsection{The $C^{2,\alpha}$ estimates of $K$}
\label{sc:c2alphaestimete}
\begin{definition}[Classical H\"older class  $C^{1,\alpha}$]
Let $r \in \rr$, we say that a function $g$ defined on the boundary $\Pi_r=\{x=(r, x_2, x_3)\}$ is of class $C^{1, \alpha}(\Pi_r)$  
if and only if $\partial_2 g$ is a continuous function and there exists $C>0$ such that
\[
|\partial_{2}g(\hat y) - \partial_{2}g(\hat x)|  \le C \tilde{d}(\hat x,\hat y)^{\alpha}
\]
for each $\hat x=(x_2,x_3)$ and $\hat y=(y_2,y_3)$ in $ \Pi_r$  and where 
\begin{equation}
\label{eq:distancetilde}
\tilde d(\hat x,\hat y)=((x_2-y_2)^4+ 16 (x_3-y_3)^2)^{\tfrac{1}{4}}.
\end{equation}
\end{definition}
\begin{remark}
Notice that the induced distance
\[
\hat d(\hat x,\hat y)=((x_2-y_2)^4+ 16 (x_3-y_3 - \tfrac{1}{2} r (x_2-y_2))^2)^{\tfrac{1}{4}}.
\]
 on $\Pi_r$, considered in  Definition \ref{def:CKalpha},  is different from $\tilde{d}$. They coincide only when $r=0$, i.e. $\Pi_0=\Pi$.
\end{remark}
In addition, we set 
$$\|g\|_{1, \alpha} = \|g\|_{1} +\sup_{\hat x, \hat y \in \Pi_r} \frac{|\partial_{2}g(\hat y) - \partial_{2}g(\hat x)|}{\tilde d(\hat x,\hat y)^{\alpha}}$$
where 
$$\|g\|_{1}=\sup_{\hat x \in \Pi_r} g(\hat x)+ \sup_{\hat x \in \Pi_r} \partial_2 g(\hat x)$$

\begin{definition}[Classical H\"older classes  $C^{2,\alpha}$]
Let $r \in \rr$, we say that a function $g$ defined on the boundary $\Pi_r=\{x=(r, x_2, x_3)\}$ is of class $C^{2, \alpha}(\Pi_r)$  
if and only if $\partial^2_2 g$ and $\partial_3 g$ are continuous functions and  there exists $C>0$ such that
\[
|\partial^2_{2}g(\hat y) - \partial^2_{2}g(\hat x)|  \le C \tilde{d}(\hat x,\hat y)^{\alpha}
\]
and 
\[
|\partial_{3}g(\hat y) - \partial_{3}g(\hat x)|  \le C \tilde{d}(\hat x,\hat y)^{\alpha}
\]
for each $\hat x=(x_2,x_3)$ and $\hat y=(y_2,y_3)$ in $ \subset \Pi_r$.
In addition, we set 
$$\|g\|_{2, \alpha} =\|g\|_{2}+  [\partial_{3}g]_{\alpha}+[\partial_{2}^2g]_{\alpha} $$
where 
$$[\partial_{3}g]_{\alpha}=\sup_{\hat x, \hat y \in \Pi_r} \frac{|\partial_{3}g(\hat y) - \partial_{3}g(\hat x)|}{\tilde d(\hat x,\hat y)^{\alpha}},$$
$$[\partial_{2}^2 g]_{\alpha}=\sup_{\hat x, \hat y \in \Pi_r} \frac{|\partial_{2}^2g(\hat y) - \partial_{2}^2g(\hat x)|}{\tilde d(\hat x,\hat y)^{\alpha}} $$
and
$$\|g\|_{2} = \|g\|_{1}+ \sup_{\hat x \in \Pi_r} |\partial_2^2 g(\hat x)|+  \sup_{\hat x \in \Pi_r} |\partial_3 g(\hat x)|.$$
\end{definition}

\begin{proposition}
\label{Pr:C=Gamma}
A function $f$ belongs to $C^{2,\alpha}(\Pi_0)$ if and only if $f$ belongs to $\Gamma^{2,\alpha}(\Pi_0)$, namely for each $\hat x  \in \Pi_0$, $\rho>0$ there exists a polynomial $P_{\hat x}(\hat y)=a_\hx + b_\hx v_2+ c_\hx v_2^2+ d_\hx v_3$ with $\hv=\hy-\hx$ and $C>0$ such that 
\begin{equation}
\label{eq:CTE}
|f(\hy)-P_{\hx}(\hy)|<C\rho^{2+\alpha}
\end{equation}
for each $\hy \in B_{\rho}(\hx)$ (see Definition \ref{def:CKalpha}).
\end{proposition}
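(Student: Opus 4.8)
The plan is to prove the equivalence by a Campanato-type argument, just as in the Euclidean theory, but keeping track of the parabolic (anisotropic) scaling: since $\tilde d(\hx,\hy)^4=(x_2-y_2)^4+16(x_3-y_3)^2$, a ball $B_\rho(\hx)$ is comparable to the box $\{|v_2|\lesssim\rho,\ |v_3|\lesssim\rho^2\}$, where $\hv=\hy-\hx$. (Here $\tilde d$ is a genuine distance, being the restriction to $\Pi_0$ of the Cygan--Kor\'anyi gauge distance, so the triangle inequality is available with no loss of constants.)

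\emph{From $C^{2,\alpha}$ to $\Gamma^{2,\alpha}$.} Given $f\in C^{2,\alpha}(\Pi_0)$, I would set $P_{\hx}(\hy)=f(\hx)+\partial_2 f(\hx)\,v_2+\tfrac12\partial_2^2 f(\hx)\,v_2^2+\partial_3 f(\hx)\,v_3$, whose coefficients are bounded by $\|f\|_{2,\alpha}$. For $\hy\in B_\rho(\hx)$ I split the increment through the intermediate point $\hz=(y_2,x_3)$. Along the horizontal segment from $\hx$ to $\hz$ one has $\tilde d=|s|$, and a second-order Taylor expansion with integral remainder gives $|f(\hz)-P_{\hx}(\hz)|\le c\,[\partial_2^2 f]_\alpha\,|y_2-x_2|^{2+\alpha}\le c\,[\partial_2^2 f]_\alpha\,\rho^{2+\alpha}$. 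Along the vertical segment from $\hz$ to $\hy$ one writes $f(\hy)-f(\hz)-\partial_3 f(\hx)v_3=\int_0^{v_3}\big(\partial_3 f(y_2,x_3+u)-\partial_3 f(\hx)\big)\,du$ and bounds the integrand by $[\partial_3 f]_\alpha\,\tilde d\big((y_2,x_3+u),\hx\big)^\alpha\le[\partial_3 f]_\alpha\,\rho^\alpha$, so this term is $\lesssim[\partial_3 f]_\alpha\,\rho^\alpha|v_3|\lesssim\rho^{2+\alpha}$. Adding the two estimates yields $|f(\hy)-P_{\hx}(\hy)|\le C\rho^{2+\alpha}$, i.e.\ $f\in\Gamma^{2,\alpha}(\Pi_0)$ with $\|f\|_{\Gamma^{2,\alpha}}\le C\|f\|_{2,\alpha}$.

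\emph{From $\Gamma^{2,\alpha}$ to $C^{2,\alpha}$.} Now let $f\in\Gamma^{2,\alpha}(\Pi_0)$ with polynomials $P_{\hx}(\hy)=a_{\hx}+b_{\hx}v_2+c_{\hx}v_2^2+d_{\hx}v_3$; letting $\rho\to0$ at $\hy=\hx$ forces $a_{\hx}=f(\hx)$. The key elementary ingredient is a norming estimate: if $Q(\hy)=A+Bv_2+Cv_2^2+Dv_3$ satisfies $|Q|\le\eps$ on $B_\rho(\hx)$, then $|A|\le\eps$, $|B|\le c\eps\rho^{-1}$, $|C|\le c\eps\rho^{-2}$, $|D|\le c\eps\rho^{-2}$; this follows by evaluating $Q$ at $\hv\in\{(\pm\rho/2,0),(0,\rho^2/8)\}$ and solving the resulting finite-difference system. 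Applying this with $Q=P_{\hx}-P_{\hx'}$ for $\hx,\hx'$ at distance $\rho=\tilde d(\hx,\hx')$: on $B_{\rho/2}(\hx)\subset B_{2\rho}(\hx)\cap B_{2\rho}(\hx')$ one has $|Q|\le 2C(2\rho)^{2+\alpha}\lesssim\rho^{2+\alpha}$, and re-expanding $P_{\hx'}$ about $\hx$ (which leaves the $v_2^2$- and $v_3$-coefficients equal to $c_{\hx'},d_{\hx'}$ and changes the $v_2$-coefficient to $b_{\hx'}+2c_{\hx'}(x_2-x_2')$), the norming estimate gives $|c_{\hx}-c_{\hx'}|\lesssim\rho^\alpha$, $|d_{\hx}-d_{\hx'}|\lesssim\rho^\alpha$, and $|b_{\hx}-b_{\hx'}-2c_{\hx'}(x_2-x_2')|\lesssim\rho^{1+\alpha}$, all with constants controlled by $\|f\|_{\Gamma^{2,\alpha}}$. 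In particular $\hx\mapsto b_{\hx}$ is continuous and $\hx\mapsto c_{\hx},\,\hx\mapsto d_{\hx}$ belong to $C^\alpha(\Pi_0)$ with respect to $\tilde d$. It remains to identify the coefficients with derivatives. Restricting the defining inequality to the horizontal line $\{(t,x_3)\}$, where $\tilde d((t,x_3),\hx)=|t-x_2|$, yields $|f(t,x_3)-a_{\hx}-b_{\hx}(t-x_2)-c_{\hx}(t-x_2)^2|\le C|t-x_2|^{2+\alpha}$; the difference quotient then gives $\partial_2 f(\hx)=b_{\hx}$, and differentiating $x_2\mapsto b_{(x_2,x_3)}$ using the relation $b_{\hx'}=b_{\hx}+2c_{\hx}(x_2'-x_2)+O(|x_2'-x_2|^{1+\alpha})$ (obtained from the previous line together with continuity of $c$) gives $\partial_2^2 f(\hx)=2c_{\hx}$. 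Likewise, on the vertical line $\{(x_2,t)\}$ one has $\tilde d((x_2,t),\hx)=2|t-x_3|^{1/2}$, so the defining inequality reads $|f(x_2,t)-a_{\hx}-d_{\hx}(t-x_3)|\le C'|t-x_3|^{1+\alpha/2}$, whence $\partial_3 f(\hx)=d_{\hx}$. (At this step one may also invoke the standard one-dimensional characterization of H\"older classes, cf.\ \cite[Lemma 4.4]{GT}.) By the previous paragraph $\partial_2^2 f=2c_{\hx}$ and $\partial_3 f=d_{\hx}$ are bounded and $C^\alpha(\Pi_0)$, while $\partial_2 f=b_{\hx}$ is continuous, so $f\in C^{2,\alpha}(\Pi_0)$ with $\|f\|_{2,\alpha}\le C\|f\|_{\Gamma^{2,\alpha}}$.

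The only genuinely delicate point is the interplay, in the converse direction, between the anisotropic norming estimate and the change of base point: one must re-expand $P_{\hx'}$ in the variable $v_2=y_2-x_2$ and observe that only the $v_2$-coefficient mixes (through the term $2c_{\hx'}(x_2-x_2')$), which is precisely why $c_{\hx}$ and $d_{\hx}$ inherit full $C^\alpha$ regularity while $b_{\hx}$ only comes out locally Lipschitz in $x_2$ --- exactly what is needed to match the definition of $C^{2,\alpha}(\Pi_0)$. Everything else is the standard Campanato bookkeeping, with the parabolic scaling $\rho\leftrightarrow(\rho,\rho^2)$ in place of the Euclidean one.
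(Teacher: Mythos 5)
Your proof is correct and follows the same Campanato-type approach as the paper: in the forward direction, a Taylor expansion of order two along the horizontal variable and of order one along the vertical variable, split through the intermediate point $(y_2,x_3)$; in the converse direction, rescaling to the unit ball, a norming estimate for second-order polynomials (the paper's Lemma \ref{lm:abcd}), comparison of the polynomials at two nearby base points, and identification of the coefficients with derivatives via difference quotients.

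There is one place where your argument is actually more careful than the paper's: when comparing $P_{\hx}$ and $P_{\hx'}$ for nearby base points, the paper writes the rescaled difference as if the coefficients subtract componentwise, i.e.\ $(P_{\hx,\rho/2}-P_{\hx',\rho/2})(\hv) = \tfrac{4}{\rho^2}\bigl[(a_\hx-a_{\hx'})+(b_\hx-b_{\hx'})\rho v_2 + (c_\hx-c_{\hx'})\rho^2 v_2^2 + (d_\hx-d_{\hx'})\rho^2 v_3\bigr]$, which is not quite right: re-centering $P_{\hx'}$ around $\hx$ introduces the cross term $2c_{\hx'}(x_2-x_2')$ in the linear coefficient and corresponding terms in the constant. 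You explicitly track this mixing and observe that only the $v_2^2$ and $v_3$ coefficients are unmixed, which is precisely what makes the estimates $|c_\hx-c_{\hx'}|\lesssim\rho^\alpha$, $|d_\hx-d_{\hx'}|\lesssim\rho^\alpha$ clean, while $b_\hx$ only acquires a Lipschitz-type increment $b_{\hx'}+2c_{\hx'}(x_2-x_2')+O(\rho^{1+\alpha})$. The paper's final conclusion is unaffected (since $c$ and $d$ do not mix and $a_\hx=f(\hx)$ is determined directly), but the displayed formula is imprecise and your version is the one that actually closes. Apart from this, the two proofs are the same; your use of integral-form remainders versus the paper's Lagrange-form remainders in the forward implication is purely cosmetic.
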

\begin{proof}
Assume that $f \in C^{2,\alpha}(\Pi_0)$. Let 
$$P_{\hat x}  (\hat y)= f(\hat x)+\partial_2 f(\hat x) (y_2-x_2)+\dfrac{ \partial_2^2 f(\hat x)}{2} (y_2-x_2)^2+ \partial_3 f(\hat x) (y_3-x_3).$$
By the Lagrange mean value theorem for the function $t\to f(y_2, x_3+t(y_3-x_3))$ with $t \in [0,1]$ we get 
\[
f(y_2,y_3)=f(y_2,x_3)+ \partial_3 f(\xi)(y_3-x_3)
\]
where $ \xi=(y_2,x_3+ \theta (y_3-x_3))$ for $\theta \in (0,1)$. Moreover, by the Taylor's formula with Lagrange remainder for the function $t\to f(x_2+t(y_2-x_2), x_3)$ with $t \in [0,1]$ we get 
\[
f(y_2,x_3)=f(x_2,x_3)+ \partial_2 f(\hat x)(y_2-x_2) + \dfrac{\partial_2^2 f(\eta)}{2}(y_2-x_2)^2
\]
where $ \eta=(x_2+\theta (y_2-x_2),x_3)$ for $\theta \in (0,1)$. Then we get 
\begin{align*}
f(y_2,y_3)&=f(x_2,x_3)+ \partial_2 f(\hat x)(y_2-x_2) + \dfrac{\partial_2^2 f(\eta)}{2}(y_2-x_2)^2+ \partial_3 f(\xi)(y_3-x_3)\\
&=P_{\hat x}  (\hat y)+\dfrac{\partial_2^2 f(\eta)-\partial_2^2 f(\hat x)}{2}(y_2-x_2)^2+  \partial_3 f(\xi)-\partial_3 f(\hat x)(y_3-x_3).
\end{align*}
Therefore 
\begin{align*}
|f(y_2,y_3)- P_{\hat x}  (\hat y)|&\le \dfrac{|\partial_2^2 f(\eta)-\partial_2^2 f(\hat x)|}{2}(y_2-x_2)^2+  |\partial_3 f(\xi)-\partial_3 f(\hat x)| \, |y_3-x_3|\\
&\le C \tilde{d}(\eta,\hat x)^{\alpha}  \tilde{d}(\hat y,\hat x)^2+  C\tilde{d}(\xi,\hat x)^{\alpha} \tilde{d}(\hat y,\hat x)^2 \le C \tilde{d}(\hat x, \hat y)^{2+\alpha}.
\end{align*}
Now, for any fixed $\hat x \in \Pi_0$ and $\rho > 0$, taking $\hat y \in   B_{\rho} (\hat x)$, clearly since $ \tilde{d}(\hat x, \hat y)^{2+\alpha} < \rho^{2+\alpha}$ we get 
\[
|f(\hat y)- P_{\hat x}  (\hat y)|< C \rho^{2+\alpha}.
\]
For the reverse implication we set 
\[
u_\rho(\hx)= \frac{u(\delta_{\rho} (\hx))}{\rho^2},
\]
where $\delta_\rho(\hat x)=(\rho x_2,\rho^2 x_3)$.
Let  $\hat x$, $\hat y$ two points at distance $\rho$ apart, by Remark \ref{rk:midpoint} there exists $\hat \xi$ such that  $\tilde{d}(\hx,\hat \xi),\tilde{d}(\hy,\hat \xi) < \frac{\sqrt{3}}{2} \rho $.
Then after a translation of $-\hat \xi$, we have $B_{\rho/2}=B_{\rho/2} (0) \subset B_{\sqrt{3}\rho}(\hat x), B_{\sqrt{3}\rho} (\hat y) $. Let  
\begin{align*}
\|P_{\hat x, \rho/2}- P_{\hat y, \rho/2} \|_{L^{\infty}(B_1)}&\le \|f_{\rho/2}-P_{\hat x, \rho/2}\|_{L^{\infty}(B_1)}+ \|f_{\rho/2}- P_{\hat y, \rho/2} \|_{L^{\infty}(B_1)}\\
&=\frac{4}{\rho^2} \sup_{\hat v \in B_{\rho/2}} |f(\hat v)- P_{\hat x}(\hat v)|+ \frac{4}{\rho^2} \sup_{\hat v \in B_{\rho/2}} |f(\hat v)- P_{\hat y}(\hat v)|\\
&\le \frac{4}{\rho^2} \sup_{\hat v \in B_{\sqrt{3}\rho}(\hat x)} |f(\hat v)- P_{\hat x}(\hat v)|+ \frac{4}{\rho^2} \sup_{\hat v \in B_{\sqrt{3} \rho}(\hat y)} |f(\hat v)- P_{\hat y}(\hat v)|\\
&\le 8 (3)^{1+\alpha/2} C \rho^{\alpha}.
\end{align*}
Notice that 
\[
(P_{\hat x, \rho/2} - P_{\hat y, \rho/2}) (\hat v)=\dfrac{4}{\rho^2}[ (a_x-a_y)+ (b_x-b_y)  \rho v_2 + (c_x- c_y) \rho^2 v_2^2 +(d_x-d_y) \rho^2 v_3].
\]
Then by Lemma \ref{lm:abcd} we get 
\begin{equation}
\label{eq:cdest}
\begin{aligned}
|a_\hx-a_\hy| \le 2 (3)^{1+\alpha/2} C \rho^{2+\alpha} \quad &\text{and} \quad |b_\hx-b_\hy| \le 4 (3)^{1+\alpha/2} C \rho^{1+\alpha}\\
|c_\hx-c_\hy| \le 4 (3)^{1+\alpha/2} C \rho^{\alpha} \quad &\text{and} \quad |d_\hx-d_\hy| \le 4 (3)^{1+\alpha/2} C \rho^{\alpha}.
\end{aligned}
\end{equation}
By assumption \eqref{eq:CTE} we easily get that $a_\hx=f(\hx)$, $f$ is continuous, $\partial_2 f(\hx)=b_\hx$, $\partial_3 f(\hx)=d_\hx$. Then by \eqref{eq:cdest} we obtain that $\partial_2 f,\partial_3 f$ are continuous and $\partial_3 f$ is $C^{\alpha}$ . Moreover, setting $e_2=(1,0)$  by  \eqref{eq:CTE} we have 
\[
(f(\hx+(h+s)e_2)- f(\hx+h e_2))-(f(\hx+s e_2)-f(\hx))=2c_\hx h s+O(s^{2+\alpha})+O(h^{2+\alpha}).
\]
Then there exists
\[
\lim_{h\to 0} \lim_{s\to 0} \frac{1}{h} \left( \frac{f(\hx+(h+s)e_2)- f(\hx+h e_2)}{s}-\frac{f(\hx+s e_2)-f(\hx)}{s} \right)=2c_\hx.
\]
On the other hand, letting $s\to 0$ in the previous limit we gain that
\[
\partial_2^2 f(\hx)=\lim_{h \to 0 } \frac{\partial_2 f(\hx+he_2)-\partial_2 f(\hx)}{h}=2 c_\hx.
\]
Finally, by \eqref{eq:cdest} we obtain that 
$|\partial_2^2 f(\hx)-\partial_2^2 f(\hy)|\le 8 (3)^{1+\alpha/2} C \tilde{d}(\hx,\hy)^{\alpha}$.
\end{proof}

\begin{remark}
\label{rk:midpoint}
Given two points $\hx,\hy \in \Pi$ such that $\rho=\tilde{d}(\hx,\hy)$ then there exists $\hat \xi=(\frac{x_2+y_2}{2}, \frac{x_3+3y_3}{4} )$ such that $\tilde{d}(\hat \xi,\hy)=\frac{\rho}{2} <\frac{\sqrt{3}}{2} \rho $ and $\tilde{d}(\hat \xi,\hx) <\frac{\sqrt{3}}{2} \rho$. Moreover if $\hx,\hy$ belongs to $B_R(\hx_0)$ for $\hx_0 \in \Pi$ then $\hat \xi$ in $B_{2R}(\hx_0)$.
\end{remark}

\begin{lemma}
\label{lm:abcd}
Let $\hat v \in \Pi$ and  $P(\hat v)=a+b v_2+c v_2^2+dv_3$. Assume that there exists $C>0$ such that  $\|P\|_{L^\infty(B_1)} \le C$, then $|a|\le C$ and $|b|,|c|,|d| \le 2C$
\begin{proof}
Setting $v_2=v_3=0$ we have $|a|\le C$. Let $\eps>0$, if $v_2=0$, $v_3=1/(1+\eps)$ we get $|a+ \frac{d}{1+\eps}| \le C$, thus $|d|\le 2C(1+\eps)$, letting $\eps \to 0 $ we get $|d|\le 2C$. Setting $v_2=\pm 1/(1+\eps)$, $v_3=0$ we obtain 
\[
\left|\frac{b}{1+\eps}+ \frac{c}{(1+\eps)^2}\right| \le 2C, \qquad \left|\frac{b}{1+\eps}-\frac{c}{(1+\eps)^2}\right| \le 2C. 
\]
Then we have 
\begin{align*}
\frac{|b|}{1+\eps}\le \frac{1}{2} \left( \left|\frac{b}{1+\eps}+ \frac{c}{(1+\eps)^2}\right|+ \left|\frac{b}{1+\eps}- \frac{c}{(1+\eps)^2}\right|\right) \le 2 C\\
\frac{|c|}{1+\eps}\le \frac{1}{2} \left( \left|\frac{b}{1+\eps}+ \frac{c}{(1+\eps)^2}\right|+ \left|\frac{b}{1+\eps}- \frac{c}{(1+\eps)^2}\right|\right) \le 2 C.
\end{align*}
Letting $\eps \to 0$ we get the desired inequalities.
\end{proof}
\end{lemma}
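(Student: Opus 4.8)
The plan is to recover each coefficient of $P$ from finitely many point values and then exploit the uniform bound $\|P\|_{L^\infty(B_1)}\le C$. Note first that the polynomials $a+bv_2+cv_2^2+dv_3$ form a $4$-dimensional vector space on which $\|\cdot\|_{L^\infty(B_1)}$ is a genuine norm (a polynomial vanishing on the nonempty open set $B_1$ vanishes identically, hence has all coefficients zero), so a qualitative bound $|a|+|b|+|c|+|d|\le c_0\|P\|_{L^\infty(B_1)}$ holds for free by equivalence of norms in finite dimension; the actual content of the lemma is the explicit values $1$ and $2$ of the constants, which I would obtain by evaluating $P$ at carefully chosen points and solving the resulting linear system.

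First, $\hat v=0$ gives $P(0)=a$, hence $|a|\le C$ directly. To isolate $d$, I restrict to the $v_3$-axis: $P(0,v_3)=a+dv_3$ is affine in $v_3$, so evaluating at a point $(0,v_3)\in\overline{B_1}$ with $|v_3|$ as close to $1$ as admissible and using $|a|\le C$ gives $|d|\le 2C$. To isolate $b$ and $c$, I restrict to the $v_2$-axis and split into even and odd parts: from $P(\pm v_2,0)=a\pm bv_2+cv_2^2$ one reads off $\tfrac12\big(P(v_2,0)+P(-v_2,0)\big)=a+cv_2^2$ and $\tfrac12\big(P(v_2,0)-P(-v_2,0)\big)=bv_2$; taking $|v_2|$ as close to $1$ as permitted and combining with $|a|\le C$ then yields $|c|\le 2C$ and $|b|\le 2C$.

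The one point requiring minor care is that $B_1$ is open, so the extremal axis points $(\pm1,0)$ and $(0,\pm1)$ are not in $B_1$ itself. This is handled in the expected way: for $\eps>0$ evaluate $P$ at $(1+\eps)^{-1}$ times these points, which do lie in $B_1$, derive the inequalities with factors $(1+\eps)^{\pm1}$, and let $\eps\downarrow 0$. There is no genuine obstacle here — the statement is a finite-dimensional linear-algebra fact and the only thing to track is the openness of $B_1$; the argument adapts immediately to the precise shape of $B_1$, with only the numerical constants depending on which boundary points of $\overline{B_1}$ lie on the coordinate axes.
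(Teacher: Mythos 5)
Your proof is in essence the paper's: $P(0)=a$ gives $|a|\le C$; on the $v_3$-axis one combines $|P(0,v_3)|\le C$ with $|a|\le C$; and your even/odd split on the $v_2$-axis is precisely the paper's averaging of the two inequalities obtained at $v_2=\pm 1/(1+\eps)$, $v_3=0$. The opening remark on norm equivalence in finite dimensions is a tidy sanity check but, as you note, yields no constants, and the rest of your argument tracks the paper's step by step.

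The caveat you raise at the end --- that the constants depend on where $\partial B_1$ meets the coordinate axes --- is more than a formality and deserves to be pinned down rather than waved at, because here $B_1$ must be the unit ball for the quasi-metric $\tilde d(\hat x,\hat y)=((x_2-y_2)^4+16(x_3-y_3)^2)^{1/4}$: that is what Proposition~\ref{Pr:C=Gamma} hands to this lemma after rescaling. For that ball the $v_2$-axis meets $\partial B_1$ at $v_2=\pm 1$, so $|b|,|c|\le 2C$ come out as stated (indeed your odd part gives the sharper $|b|\le C$). But the $v_3$-axis meets $\partial B_1$ at $v_3=\pm 1/4$, not $\pm 1$, so $(0,1/(1+\eps))$ is \emph{not} in $B_1$, contrary to what you assert; the evaluation argument in fact yields $|d|\le 8C$, not $|d|\le 2C$, and the same slip is present in the paper. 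This is harmless downstream, since Proposition~\ref{Pr:C=Gamma} only needs some universal constant, but the bound $|d|\le 2C$ as written is not what either argument delivers for the ball actually in play --- you sensed the sensitivity but left the consequence unresolved.
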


\begin{lemma}
\label{lm:sypol2}
Let $a_i \in \rr$ for each $i=1,\ldots,4$, $D_{\hat x} \subset \Pi$ be a set axially  symmetric with respect  to $y_2=x_2$ and  $y_3=x_3$ where $\hat x \in \Pi$. Let $k(\hat x- \hat y)$ be the convolution kernel given by 
\begin{equation}
\label{eq:convkernel}
k(\hat x- \hat y )=
\dfrac{4}{\pi} \frac{  (x_2-y_2) \, (x_3 - y_3) }
{\Big( \big( x_2-y_2\big)^4 + 
16 \big(x_3 - y_3) \big)^2 \Big)^{3/2}},
\end{equation}
then we have 
\begin{equation}
\label{eq:kp0}
\int_{D_{\hat x}} k(\hat x- \hat y) p (\hat x- \hat y) \, d \hat y=0,
\end{equation}
for each polynomial  
\[
p (\hat x- \hat y)=a_1+ a_2 (\hat x_2- \hat y_2)+a_3 (\hat x_3- \hat y_3)+ a_4(\hat x_2- \hat y_2)^2
\]
of degree less than or equal to $2$.
\end{lemma}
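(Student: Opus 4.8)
The plan is to prove \eqref{eq:kp0} by a parity argument, handling the four monomials in $p$ one at a time. First I would pass to the increment variables $s=x_2-y_2$, $t=x_3-y_3$, so that the kernel \eqref{eq:convkernel} reads $k=\tfrac{4}{\pi}\,st\,(s^4+16t^2)^{-3/2}$. The structural fact driving everything is that $k$ is \emph{odd separately in each variable}: the numerator is the monomial $st$ and the denominator depends only on $s^4$ and $t^2$, so $k(-s,t)=-k(s,t)$ and $k(s,-t)=-k(s,t)$. Meanwhile the monomials in $p(\hat x-\hat y)=a_1+a_2 s+a_3 t+a_4 s^2$ have definite parities: $1$ and $s^2$ are even in both $s$ and $t$; $s$ is odd in $s$, even in $t$; and $t$ is even in $s$, odd in $t$. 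Hence $a_1 k$, $a_4 s^2 k$ and $a_3 t k$ are all odd in $s$, while $a_2 s k$ is odd in $t$.

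Next I would use the symmetry of $D_{\hat x}$. By hypothesis $D_{\hat x}$ is invariant under the reflection $R_2\colon (y_2,y_3)\mapsto(2x_2-y_2,y_3)$ across $\{y_2=x_2\}$ and under $R_3\colon (y_2,y_3)\mapsto(y_2,2x_3-y_3)$ across $\{y_3=x_3\}$; in the increment variables these are precisely $s\mapsto -s$ and $t\mapsto -t$, each with unit Jacobian. Applying $R_2$ as a change of variables in the three integrals $\int_{D_{\hat x}}a_1 k\,d\hat y$, $\int_{D_{\hat x}}a_4 s^2 k\,d\hat y$, $\int_{D_{\hat x}}a_3 t k\,d\hat y$, and $R_3$ in $\int_{D_{\hat x}}a_2 s k\,d\hat y$, each integral is shown to equal its own negative and hence vanishes; summing the four contributions yields \eqref{eq:kp0}.

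The one technical point is integrability near the diagonal $\hat y=\hat x$. The kernel $k$ is homogeneous of degree $-3$ under the anisotropic dilations $(s,t)\mapsto(\lambda s,\lambda^2 t)$, while $d\hat y=ds\,dt$ is homogeneous of degree $3$; thus $k$ is not absolutely integrable across $\hat x$ and the constant term $a_1$ must be understood as a principal value. Multiplying $k$ by $s$, $t$, or $s^2$, however, raises the degree of homogeneity to $-2$ or $-1$, which makes those three integrands absolutely convergent near $\hat x$, so no principal value is needed there. For the $a_1$ term I would truncate $D_{\hat x}$ by deleting the gauge ball $\{\tilde d(\hat x,\hat y)<\eps\}$, which (being a function of $s^4$ and $t^2$) is itself invariant under $R_2$ and $R_3$; on the truncated, still symmetric, domain the integral of $k$ vanishes exactly by the $s\mapsto -s$ change of variables, and letting $\eps\to 0$ closes the argument. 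I expect this principal-value bookkeeping --- not the parity computation, which is immediate --- to be the only point needing care.
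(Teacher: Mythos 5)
Your parity argument coincides with the paper's own proof: the paper changes to the increment variables $(s,t)=(\hat x_2-\hat y_2,\hat x_3-\hat y_3)$, notes that $k$ is odd in each variable separately, and concludes by the reflection symmetry of the domain, exactly as you do. The one genuine addition on your part is the explicit principal-value treatment of the $a_1$ term, which the paper leaves implicit; this is a correct and worthwhile observation, since $k$ alone is homogeneous of degree $-(Q-1)=-3$ on $\Pi$ and hence only conditionally integrable near the diagonal, while the higher-degree monomials produce absolutely convergent integrands.
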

\begin{proof}
Changing the variable $\hat y_2'=\hat x_2-\hat y_2$ and $\hat y_3'=\hat x_3-\hat y_3$ we get that \eqref{eq:kp0} is equivalent to 
\begin{align*}
a_1 \int_{D_0} k(\hat y)  d \, \hat  y+ a_2   \int_{D_0} k(\hat y) \hat y_2  d \, \hat y +a_3 \int_{D_0} k(\hat y) \hat y_3  \,d \hat y + a_4 \int_{D_0} k(\hat y) \hat y_2^2 \,d \hat y=0,
\end{align*}
since the kernel $k$ is symmetric both in $y_2$ and in $y_3$. We denote by $D_0$ the translation of $D_{\hat x}$ in the origin.
\end{proof}

\begin{lemma}
\label{lm:coarea}
Let $0<\alpha<1$, $\tilde d(\hat x,\hat y)$ be the distance defined in \eqref{eq:distancetilde} and 
\[
 \hat B_r(\hat x)=\Big\{ (y_2,y_3) \in \Pi \ : \ \tilde d(\hat x,\hat y)<r\Big\}
\]
be the associated metric  ball of radius $r$ and center  $\hat x=(x_2,x_3)$. Let $k(\hat x- \hat y )$ be the convolution kernel defined in \eqref{eq:convkernel}. Then there exists a constant $C_3$  such that
\begin{equation}
\label{eq:intonball}
\int_{\hat B_r(\hat x)} |k(\hat x- \hat y )| \tilde{d} (\hat x,\hat y)^{j+\alpha} d \hat y\le C_{2} \, r^{j+\alpha}.
\end{equation}
When $j \neq 0$ then \eqref{eq:intonball} holds also for $\alpha=0$.
\end{lemma}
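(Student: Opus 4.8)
The plan is to reduce the estimate to a single integral over a fixed unit ball by exploiting the joint homogeneity of the kernel \(k\), of the distance \(\tilde d\) and of the Lebesgue measure on \(\Pi\) under the anisotropic dilations \(\delta_\rho(y_2,y_3)=(\rho y_2,\rho^2 y_3)\). Since both \(k(\hx-\hy)\) and \(\tilde d(\hx,\hy)\) depend only on the difference \(\hx-\hy\), after translating we may take \(\hx=0\), so that the integrand becomes a function of \(\hy\) alone. A direct computation gives \(\tilde d(0,\delta_\rho\hy)=\rho\,\tilde d(0,\hy)\), \(|k(\delta_\rho\hy)|=\rho^{-3}|k(\hy)|\) and \(d(\delta_\rho\hy)=\rho^{3}\,d\hy\) (the homogeneous dimension of \(\Pi\) is \(3\)), while \(\delta_r\) carries \(\hat B_1(0)\) onto \(\hat B_r(0)\). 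Substituting \(\hy=\delta_r\hv\) in the left-hand side of \eqref{eq:intonball} then yields exactly \(r^{j+\alpha}\) times the constant
\[
C_2:=\int_{\hat B_1(0)}|k(\hv)|\,\tilde d(0,\hv)^{j+\alpha}\,d\hv,
\]
and the whole statement reduces to proving \(C_2<\infty\).

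For this I would use an elementary pointwise bound for the kernel. Writing \(s=\tilde d(0,\hv)=(v_2^4+16v_3^2)^{1/4}\), one has \(|v_2|\le s\), \(|v_3|\le\tfrac14 s^2\) and \((v_2^4+16v_3^2)^{3/2}=s^6\), so
\[
|k(\hv)|=\frac{4}{\pi}\,\frac{|v_2|\,|v_3|}{(v_2^4+16v_3^2)^{3/2}}\le\frac{1}{\pi}\,s^{-3}=\frac{1}{\pi}\,\tilde d(0,\hv)^{-3};
\]
in other words \(k\) is homogeneous of degree \(-3\) and bounded on the anisotropic unit sphere. It therefore suffices to check that \(\int_{\hat B_1(0)}\tilde d(0,\hv)^{\beta}\,d\hv<\infty\) for \(\beta=j+\alpha-3\).

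For this last integrability I would pass to anisotropic polar coordinates \(v_2=s\,\omega_2\), \(v_3=s^2\,\omega_3\) with \((\omega_2,\omega_3)\) on the compact smooth curve \(\{\omega_2^4+16\omega_3^2=1\}\); the Jacobian equals \(s^2\) times a bounded function of \(\omega\), so the integral becomes a constant multiple of \(\int_0^1 s^{\beta+2}\,ds=\int_0^1 s^{\,j+\alpha-1}\,ds\), which is finite exactly when \(j+\alpha>0\). Equivalently, one may split \(\hat B_1(0)\) into the dyadic annuli \(\hat B_{2^{-m}}(0)\setminus\hat B_{2^{-m-1}}(0)\), \(m\ge0\), whose measure is comparable to \(2^{-3m}\) and on which \(\tilde d\) is comparable to \(2^{-m}\), reducing the integral to the geometric series \(\sum_{m\ge0}2^{-m(j+\alpha)}\). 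Since \(0<\alpha<1\) and \(j\ge0\) the condition \(j+\alpha>0\) always holds, and when \(j\neq0\) it persists for \(\alpha=0\), which gives the last claim of the lemma.

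I do not expect a genuine obstacle here; the one point to watch is that \(k\) by itself is \emph{not} integrable on \(\hat B_1(0)\) — it is exactly homogeneous of degree \(-3=-(Q-1)\), the borderline case — so it is precisely the extra factor \(\tilde d^{\,j+\alpha}\), together with the strict inequality \(j+\alpha>0\), that makes the integral converge. Consequently the only care needed is in setting up the anisotropic change of variables and, in the first approach, in checking that the parametrization of the anisotropic unit sphere is a genuine diffeomorphism off the origin.
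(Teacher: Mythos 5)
Your proof is correct and rests on the same two ingredients as the paper's: the pointwise bound $|k(\hv)| \le C\,\tilde d(0,\hv)^{-3}$ (the paper derives it via Young's inequality, you via the elementary inequalities $|v_2|\le s$, $|v_3|\le s^2/4$ — both yield the same $\tilde d^{-3}$ homogeneity), and the anisotropic homogeneity of $\tilde d$, the kernel, and Lebesgue measure on $\Pi$ under $\delta_\rho$. The route is marginally different in presentation: you first rescale the entire integral by $\delta_r$ to reduce to the unit ball, picking up the factor $r^{j+\alpha}$ cleanly, and then check finiteness of the unit-ball integral by anisotropic polar coordinates or a dyadic decomposition. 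The paper instead applies the coarea formula on $\hat B_r$ directly and uses the identity $\int_{\partial\hat B_s}|\nabla\tilde d|^{-1}\,dH^1 = 3s^2|\hat B_1|$ (itself obtained from the same dilation argument) to collapse the radial integral to $\int_0^r s^{j+\alpha-1}\,ds$. These are the same computation in two guises; your global rescaling arguably isolates the role of homogeneity more transparently and your dyadic alternative dispenses with the coarea machinery entirely, at the cost of losing the explicit constant $3C_1|\hat B_1(0)|$ that the paper's version produces. The final observation about $j\neq 0$, $\alpha=0$ is handled identically in both: the convergence criterion is $j+\alpha>0$, which survives $\alpha=0$ as soon as $j\ge1$.
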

\begin{proof}
By Young's inequality there exist a constant $C_1$ such that 
\[
|x_2-y_2| |x_3-y_3|  \le \dfrac{|x_2-y_2|^3}{3} + \dfrac{2 \, |x_3-y_3| ^{\tfrac{3}{2}}}{3} \le C_1 (|x_2-y_2|^4+|x_3-y_3|^2)^{\tfrac{3}{4}},
\]
then we get that 
\[
|k(\hat x- \hat y )| \leq  C_1 \tilde d (\hat x , \hat y)^{-3}.
\]
Then we have 
\begin{equation*}
\begin{aligned}
\int_{B_r(\hat x)} |k(\hat x- \hat y )| \tilde{d} (\hat x,\hat y)^{j+\alpha} d \hat y &\leq  C_1 \int_{B_r(\hat x)}  \tilde{d} (\hat x,\hat y)^{j+\alpha-3} d \hat y\\
  &=  C_1 \int_{\hat B_r(0)}  \tilde {d} (0,(v_2,v_3))^{j+\alpha-3} d v_2 \, dv_3 \\
&= C_1  \int_0^r s^{j+\alpha-3}  \left( \int_{\partial \hat B_s(0)}  \dfrac{1}{|\nabla \tilde {d} |} \ d H^{1} \right) ds\\
&= 3 C_1 |\hat B_1(0)| \int_0^r s^{j+\alpha-1}  ds= C_2 \ r^{j+\alpha}
\end{aligned}
\end{equation*}
where $|\hat B_1(0)|$ is the $2$-Lebesgue measure of $\hat B_1(0)$ and $H^{1}$ is the $1$-dimensional Hausdorff measure.
In the second to last equality  we used 
\begin{equation}
\label{eq:surfacemeasureofball0}
3 r^{2}|\hat B_1(0)|=\int_{\partial \hat B_r(0)}  \dfrac{1}{|\nabla \tilde {d} |} \ d H^{1}.
\end{equation}
Indeed  by the coarea formula and using the induced dilations $\hat \delta_r (v_2, v_3)=(r \, v_2, r^2 \, v_3)$ we have
\[
r^{3}|\hat B_1(0)|=|\hat \delta_r(\hat B_1(0))|= |\hat B_r(0)|=\int_0^r \left( \int_{\partial \hat B_s(0)}  \dfrac{1}{|\nabla \tilde {d} |} \ d H^{1} \right) ds.
\]
Differentiating this last identity with respect to $r$ we obtain \eqref{eq:surfacemeasureofball0}.
\end{proof}

\begin{theorem}
\label{th:Kcont}
Let $k$ be the kernel defined in \eqref{eq:kk}, we set  
\[
K(f)( \hat x)=\int_{\Pi} k(\hat x-\hat y) f(\hat y) d \hat y
\]
for each $\hat x \in \Pi$. Assume that $f \in C^{2,\alpha}(\Pi)$ and $f$  compactly supported in $\Pi$ then there exists a constant $C$ such that 
\begin{equation}
\label{eq:C2ahe}
\| K (f) \|_{C^{2,\alpha}} \le C \| f \|_{C^{2,\alpha}}
\end{equation}
\end{theorem}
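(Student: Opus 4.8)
The plan is to establish \eqref{eq:C2ahe} by the classical Schauder-type argument for convolution with a kernel that has a Calderón–Zygmund singularity of order $-3$ (the homogeneous dimension of $\Pi$ with respect to the dilations $\hat\delta_r$), combined with the cancellation property of Lemma \ref{lm:sypol2} and the integral estimates of Lemma \ref{lm:coarea}. The key structural fact is that the relevant derivatives are $\partial_2, \partial_2^2, \partial_3$, and that applying $\partial_2$ to $K(f)$ can be transferred to $f$ (since $k$ is a convolution kernel), while $\partial_3$ raises the homogeneity by two units in the Folland–Stein scaling, so $\partial_3 k$ is again a kernel of order $-3$ with zero average. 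Thus it suffices to estimate, for a generic second-order "derived kernel" $\tilde k$ of order $-3$ satisfying the cancellation $\int_{D_{\hat x}} \tilde k(\hat x-\hat y)\,p(\hat x-\hat y)\,d\hat y=0$ for polynomials $p$ of degree $\le 2$, the quantity $[\tilde K(f)]_\alpha$ where $\tilde K(f)(\hat x)=\text{p.v.}\int_\Pi \tilde k(\hat x-\hat y)f(\hat y)\,d\hat y$.

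The main step follows the scheme of \cite[Lemma 4.4]{GT}. Fix $\hat x_1,\hat x_2\in\Pi$, set $\rho=\tilde d(\hat x_1,\hat x_2)$, and use Remark \ref{rk:midpoint} to pick a midpoint $\hat\xi$ with $\tilde d(\hat\xi,\hat x_i)<\tfrac{\sqrt 3}{2}\rho$. Split the integral defining $\tilde K(f)(\hat x_1)-\tilde K(f)(\hat x_2)$ over the near region $B=\hat B_{2\rho}(\hat\xi)$ and the far region $\Pi\setminus B$. On the near region, for each center $\hat x_i$ one writes $f(\hat y)=P_{\hat x_i}(\hat y)+R_i(\hat y)$ with $|R_i(\hat y)|\le C\|f\|_{C^{2,\alpha}}\tilde d(\hat x_i,\hat y)^{2+\alpha}$ (Proposition \ref{Pr:C=Gamma}); the polynomial part is killed by the cancellation of $\tilde k$ over a symmetric subdomain, and the remainder is controlled by Lemma \ref{lm:coarea} with $j=2$, giving a bound $C\|f\|_{C^{2,\alpha}}\rho^{2+\alpha}\cdot\rho^{-2}=C\|f\|_{C^{2,\alpha}}\rho^\alpha$ after accounting for the $-3$ homogeneity of $\tilde k$ and the $\rho^{-2}$ normalization coming from differentiating twice. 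On the far region, one uses the mean value inequality $|\tilde k(\hat x_1-\hat y)-\tilde k(\hat x_2-\hat y)|\le C\rho\,\tilde d(\hat\xi,\hat y)^{-4}$ (together with the cancellation to first subtract $P_{\hat\xi}$), and integrates $\int_{\Pi\setminus B}\rho\,\tilde d(\hat\xi,\hat y)^{-4}\tilde d(\hat\xi,\hat y)^{2+\alpha}\,d\hat y \le C\rho\cdot\rho^{\alpha-1}=C\rho^\alpha$, again by the coarea computation in Lemma \ref{lm:coarea} (the support of $f$ being compact makes the far integral convergent at infinity). The sup bounds on $K(f),\partial_2 K(f),\partial_2^2 K(f),\partial_3 K(f)$ are handled by the same splitting without the difference, using $j=0$ (with $\alpha>0$) and the cancellation to subtract the Taylor polynomial at the base point.

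The main obstacle I anticipate is the bookkeeping of \emph{which} differentiated kernels actually satisfy the degree-$2$ cancellation hypothesis of Lemma \ref{lm:sypol2}: one must verify that $\partial_3 k$ and the kernel arising after transferring $\partial_2$ twice through the convolution are each odd in $y_2$ or in $y_3$ about the appropriate axes over a symmetric domain, so that the integral of $\tilde k$ against $1$, $(x_2-y_2)$, $(x_3-y_3)$, $(x_2-y_2)^2$ vanishes — and in particular that the principal value is well-defined and behaves like a genuine Calderón–Zygmund operator in this anisotropic geometry. A secondary technical point is justifying the interchange of $\partial_3$ with the (principal value) integral, which requires the uniform integrability estimates of Lemma \ref{lm:coarea} together with the compact support of $f$; once these are in place, the estimate \eqref{eq:C2ahe} follows by collecting the near- and far-region bounds for each of the four quantities $\|K(f)\|_\infty$, $\|\partial_2 K(f)\|_\infty$, $[\partial_2^2 K(f)]_\alpha$, $[\partial_3 K(f)]_\alpha$ (the lower-order sup norms following from these by the compact support).
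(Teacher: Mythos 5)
Your overall strategy — a GT-Lemma-4.4-type argument with the midpoint of Remark \ref{rk:midpoint}, a near/far split, the cancellation of Lemma \ref{lm:sypol2}, and the integral bounds of Lemma \ref{lm:coarea} — is indeed the same scheme the paper uses. But there are three concrete problems in the execution, and they are not just cosmetic.

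First, the statement that ``$\partial_3 k$ is again a kernel of order $-3$'' is wrong: $\partial_3$ lowers the homogeneity by two (the degree of the vertical direction), so $\partial_3 k$ has order $-5$, and $\partial_2^2 k$ likewise has order $-5$, too singular for Lemma \ref{lm:coarea} as stated. The correct move — which you mention in passing but then abandon — is to transfer \emph{all} derivatives onto $f$ via the convolution structure, so that the kernel is always $k$ (order $-3$) and the function in the integral is $\partial_2^2 f$ or $\partial_3 f$, which is merely $C^\alpha$. Second, and consequently, the right object to approximate is $\partial_2^2 f$ by the degree-zero ``polynomial'' $\partial_2^2 f(\hat x_i)$ with a $C\tilde d^\alpha$ remainder — not $f$ by its degree-two polynomial $P_{\hat x_i}$ with a $C\tilde d^{2+\alpha}$ remainder. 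With your degree-two version the far-region integral diverges: by the anisotropic coarea formula (equation \eqref{eq:surfacemeasureofball0}), $\int_{\tilde d>2\rho}\tilde d^{-4}\,\tilde d^{2+\alpha}\,d\hat y = 3|\hat B_1|\int_{2\rho}^\infty s^{-2+\alpha}\,s^2\,ds = 3|\hat B_1|\int_{2\rho}^\infty s^\alpha\,ds = +\infty$. The $\rho^{\alpha-1}$ you wrote would be correct only without the $s^2$ Jacobian; you have used the one-dimensional scaling in a homogeneous space of dimension three. With the degree-zero remainder $\tilde d^\alpha$ one gets instead $\int_{2\rho}^\infty s^{-4+\alpha}s^2\,ds = O(\rho^{\alpha-1})$, which is exactly what is needed (and this is what the paper's $I_6$ term computes, using the two intermediate points $\hat\eta,\hat\zeta$ and the weights $\delta$, $\delta^2$ for the horizontal and vertical increments).

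Third, you have not accounted for the fact that, once $\partial_2^2 f(\hat y)$ is split into $\partial_2^2 f(\hat x_i)+\bigl(\partial_2^2 f(\hat y)-\partial_2^2 f(\hat x_i)\bigr)$, the constant part multiplies $\int_B k(\hat x_i-\hat y)\,d\hat y$, and this does \emph{not} vanish by Lemma \ref{lm:sypol2} because $B$ is centered at the midpoint $\hat\xi$, not at $\hat x_i$. The paper resolves this by the observation that $k=\partial_{y_3}\ell$ with $\ell$ homogeneous of degree $-1$, converting that contribution into the boundary integrals $I_1$, $I_2$, $I_5$ over $\partial B_{8R}$ and $\partial B_{\sqrt 3\delta}(\hat\xi)$, which are then controlled by \eqref{eq:dervker} and \eqref{eq:partialBr}. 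Your proposal has no device to handle these terms, and subtracting $P_{\hat\xi}$ on the far region does not help either, since $\int_{\Pi\setminus B} k\cdot p$ is divergent for $\deg p\geq 1$ so the cancellation lemma cannot be invoked there. In short: the architecture is right, but you must (i) move derivatives onto $f$ and use the $C^\alpha$ (degree zero) approximation of the differentiated function, (ii) redo the far-region scaling with the correct three-dimensional anisotropic volume growth, and (iii) introduce the primitive $\ell$ of $k$ to control the boundary contributions of the constant part.
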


\begin{proof}

First of all notice that $k(\hat x,\hat y)$ is a convolution kernel, so that we can write $k(\hat x-\hat y)$ and by  Lemma \ref{lm:sypol2}, the kernel $k$ satisfies the cancellation condition, thus $K$ is a well-defined singular integral operator. Moreover, 
by Lemma \ref{lm:sypol2} we have 
\[
K(f)(\hat x)= \int_{\Pi \smallsetminus \hat B_1(\hat x)} k(\hat x-\hat y) f(\hat y) d \hat y+ \int_{ \hat B_1(\hat x)} k(\hat x-\hat y) (f(\hat y)- f(\hat x)) d \hat y,
\]
then, letting $B_R$ be a sufficiently large ball that contains the compact support of $f$ and using Lemma \ref{lm:coarea}, we get
\begin{align*}
|K(f)(\hat x)|&\le  \int_{\Pi \smallsetminus \hat B_1(\hat x)} |k(\hat x-\hat y)| |f(\hy)| d \hat y+ \| \partial_{2}f\|_{\infty} \int_{ \hat B_1(\hat x)} |k(\hat x-\hat y)| \tilde d(\hat x ,\hat y) d \hat y\\
&\le   C_1 \int_{\Pi \smallsetminus \hat B_1(\hat x)}  |f(\hy)| d \hat y+  \|f\|_{C^1} \le \tilde{C} \|f\|_{C^1}.
\end{align*}
Similarly we have 
\begin{align*}
\partial_{2} K(f)(\hx)&=\int_{B_R\smallsetminus B_{1}(\hx)} k(\hat x-\hat y) \partial_2 f(\hat y) d\hat y+ \int_{ B_{1} (\hx)} k(\hx-\hat y) (\partial_2 f(\hat y)  -\partial_2f(\hx) ) d\hat y,\\
\partial_{3} K(f)(\hx)&=\int_{B_R\smallsetminus B_{1}(\hx)} k(\hat x-\hat y) \partial_3 f(\hat y) d\hat y+ \int_{ B_{1} (\hx)} k(\hx-\hat y) (\partial_3 f(\hat y)  -\partial_3f(\hx) ) d\hat y,\\
\partial_{2}^2 K(f)(\hx)&=\int_{B_R\smallsetminus B_{1}(\hx)} k(\hat x-\hat y) \partial_2^2 f(\hat y) d\hat y+ \int_{ B_{1} (\hx)} k(\hx-\hat y) (\partial_2^2 f(\hat y)  -\partial_2^2f(\hx) ) d\hat y,
\end{align*}
then there exists a constant $\tilde{C}$ such that
\begin{align*}
|\partial_3K(f)(\hat x)|&\le \| \partial_3 f\|_{\infty} \int_{B_R \smallsetminus \hat B_1(\hat x)} |k(\hat x-\hat y)| d \hat y+ \| f\|_{C^{2,\alpha}} \int_{ \hat B_1(\hat x)} |k(\hat x-\hat y)| \tilde d(\hat x ,\hat y)^{\alpha} d \hat y\\
&\le \tilde{C} \|f\|_{C^{2,\alpha}},
\end{align*}
 $|\partial_2K(f)(\hat x)| \le \tilde{C} \|f\|_{C^{2}}$ and  $|\partial_2^2 K(f)(\hat x)| \le \tilde{C} \|f\|_{C^{2,\alpha}}$.
Therefore there exists a constant $\tilde{C}_2$ such that
\begin{equation}
\label{eq:C2e}
\|K(f)\|_{C^2} \le \tilde{C}_2  \| f \|_{C^{2,\alpha}}.
\end{equation}

Let $\hx_0$ be a point in $\Pi$. Fix $\hat x$ and $\hat z$ in $B_R=B_R(\hx_0)$ and let  $\delta=\tilde{d}(\hat x, \hat z)$. A direct computation shows  that 
$
k(\hx-\hy)= \frac{\partial}{\partial y_3} \ell (\hx-\hy)
$
where
\[
\ell (\hx-\hy)=\frac{1}{4\pi}  \frac{(x_2-y_2)}{\Big( \big( x_2-y_2\big)^4 + 
16 \big(x_3 - y_3 \big)^2 \Big)^{1/2}}.
\]
Notice that there exist a constant $C_3$ such that
\begin{equation}
\label{eq:dervker}
|\ell (\hx-\hy)|\le C_3 \tilde{d}(\hx,\hy)^{-1}, \,   |\partial_2\ell (\hx-\hy)|\le C_3 \tilde{d}(\hx,\hy)^{-2}, \,  |\partial_3\ell (\hx-\hy)|\le C_3 \tilde{d}(\hx,\hy)^{-2}
\end{equation}
Following  \cite[Lemma 4.4]{GT}, for each $\hx, \hz \in B_R(\hx_0)$ we have 
\[
\partial_2^2 K(f) (\hx)= \int_{B_{8R}} k(\hx-\hy) (\partial_2^2 f(\hy)-\partial_2^2 f(\hx)) d \hy + \partial_2^2 f(\hx)\int_{\partial B_{8R}} \ell (\hx-\hy) \nu_3 d \sigma(\hy),
\]
and 
\[
\partial_2^2 K(f) (\hz)= \int_{B_{8R}} k(\hz-\hy) (\partial_2^2 f(\hy)-\partial_2^2 f(\hz)) d \hy + \partial_2^2 f(\hz)\int_{\partial B_{8R}} \ell (\hz-\hy) \nu_3 d \sigma(\hy),
\]
where $\nu=(\nu_2,\nu_3)$ is the unit normal to $\partial B_{8R}$. Writing  $\delta=\tilde{d}(\hat x, \hat z)$ and letting  $\hat \xi$ be the point given by Remark \ref{rk:midpoint}  we obtain 
\begin{align*}
\partial_2^2 K(f) (\hx)-\partial_2^2 K(f) (\hz)=& \partial_2^2 f(\hx)I_1+(\partial_2^2 f(\hx)-\partial_2^2 f(\hz))I_2+I_3 + I_4\\
 &+(\partial_2^2 f(\hz)-\partial_2^2 f(\hx))I_5 + I_6,
\end{align*}
where 
\begin{align*}
I_1&= \int_{\partial B_{8R}} (\ell (\hx-\hy) -\ell (\hz-\hy) ) \nu_3 d \sigma(\hy)\\
I_2&= \int_{\partial B_{8R}}  \ell (\hz-\hy) \nu_3 d \sigma(\hy)\\
I_3&= \int_{ B_{\sqrt{3} \delta}(\hat \xi)}  k(\hx-\hy) (\partial_2^2 f(\hy)-\partial_2^2 f(\hx)) d \hy\\
I_4&=\int_{ B_{\sqrt{3}\delta}(\hat \xi)}  k(\hz-\hy) (\partial_2^2 f(\hz)-\partial_2^2 f(\hy)) d \hy\\
I_5&=\int_{ B_{8R}\smallsetminus B_{\sqrt{3}\delta}(\hat \xi)}  k(\hx,\hy) d \hy\\
I_6&=\int_{ B_{8R}\smallsetminus B_{\sqrt{3}\delta}(\hat \xi)} (k(\hx,\hy) - k(\hz,\hy) )(\partial_2^2 f(\hy)-\partial_2^2 f(\hz)) ) d \hy
\end{align*}
Let $\ga(t)=(r \sqrt{\cos(t)},\frac{r^2 \sin(t)}{4})$ for $t \in (-\frac{\pi}{2}, \frac{\pi}{2})$ be a parametrization of $\partial B_r$, where $\|v\|=\tilde{d}(0,v)$. Then we have 
\begin{equation}
\label{eq:partialBr}
H^1(\partial B_r)=2 \int_{-\frac{\pi}{2}}^{\frac{\pi}{2}} \| \dot{\ga}(t)\| dt= \frac{r}{2} \int_{-\frac{\pi}{2}}^{\frac{\pi}{2}} \frac{ \sqrt{\sin(t)^4+16 \cos(t)^4}}{\cos(t)} dt= r H^1(\partial B_1).
\end{equation}
Let  $\hat \eta=(x_2+\theta_1 (z_2-x_2),x_3) $ and $\hat \zeta=(z_2,x_3+ \theta_2 (z_3-x_3)) $ for $\theta_1, \theta_2 \in (0,1)$. Then  there exist constants $C_4,\ldots,C_9$ independent of $R,\delta$ such that 
\begin{align*} 
|I_1| \le& \tilde{d}(\hx,\hz)  \int_{\partial B_{2R}} |\partial_2\ell(\hat \eta-\hy)| d \sigma( \hy)+ \tilde{d}(\hx,\hz)^2 \int_{\partial B_{2R}} |\partial_3\ell(\hat \zeta-\hy)| d \sigma( \hy)\\
         \le  & C_4 \tilde{d}(\hx,\hz)^{\alpha} R^{-\alpha}=C_4 \left(\frac{\delta}{R}\right)^{\alpha} \qquad(\text{by equation} \, \eqref{eq:dervker} \, \text{and} \, \eqref{eq:partialBr} ). \\
|I_2| \le &C_4.\\
|I_3| \le &  \int_{ B_{\sqrt{3}\delta}(\hat \xi)}  |k(\hx-\hy)| |\partial_2^2 f(\hy)-\partial_2^2 f(\hx)| d \hy \le C_1 [\partial_2^2 f]_{\alpha} \int_{ B_{9\delta/2}(\hx)}  |k(\hx-\hy)|\tilde{d}(\hx, \hy)^{\alpha} \ dy.\\
&\le C_4  [\partial_2^2 f]_{\alpha} \delta^{\alpha}   \qquad \text{(by Lemma \ref{lm:coarea}) }\\
|I_4|  \le& C_4 [\partial_2^2 f]_{\alpha} \delta^{\alpha}  \qquad \text{as in the estimation of } \ I_3\\
|I_5|  =& \left| \int_{\partial (B_{8R}- B_{\sqrt{3}\delta}(\hat \xi))} \ell(\hx-\hy) \nu_3 d \sigma(\hy) \right|\\
\le&  \left| \int_{\partial B_{8R}} \ell(\hx-\hy) \nu_3 d \sigma(\hy) \right|+\left| \int_{\partial  B_{\sqrt{3} \delta}(\hat \xi)} \ell(\hx-\hy) \nu_3 d \sigma(\hy) \right| \le C_5,
\end{align*}
thanks to equations \eqref{eq:dervker} and\eqref{eq:partialBr}. Moreover, we have
\begin{align*} 
|I_6| \le & C_6 \delta \int_{ B_{8R}\smallsetminus B_{\sqrt{3}\delta}(\hat \xi)} |\partial_2 k(\hat \eta-\hy)| \, |(\partial_2^2 f(\hy)-\partial_2^2 f(\hz)) | d \hy\\
& \quad + \delta^2\int_{ B_{8R}\smallsetminus B_{\sqrt{3}\delta}(\hat \xi)} |\partial_3 k(\hat \zeta-\hy)| \, |(\partial_2^2 f(\hy)-\partial_2^2 f(\hz)) | d \hy\\
\le& C_7 [\partial_2^2 f]_{\alpha} \left( \delta \int_{\tilde{d}(\hy,\hat \xi)\ge \sqrt{3} \delta} \frac{\tilde{d} (\hy,\hz)^{\alpha}}{\tilde{d}(\hy, \hat \eta)^4} d \hy+ \delta^2 \int_{\tilde{d}(\hy,\hat \xi)\ge \sqrt{3} \delta} \frac{\tilde{d} (\hy,\hz)^{\alpha}}{\tilde{d}(\hy, \hat \zeta)^5} d \hy \right)\\
\le& C_8 [\partial_2^2 f]_{\alpha} \left( \delta \int_{\tilde{d}(\hy,\hat \xi)\ge \sqrt{3} \delta} \frac{\tilde{d} (\hy,\hat \xi)^{\alpha}}{\tilde{d}(\hy, \hat \xi)^4} d \hy+ \delta^2 \int_{\tilde{d}(\hy,\hat \xi)\ge \sqrt{3} \delta} \frac{\tilde{d} (\hy,\hat \xi)^{\alpha}}{\tilde{d}(\hy, \hat \xi)^5} d \hy \right)\\
&(\text{Since by Remark \ref{rk:midpoint}} \quad \tilde{d}(\hy,\hz) \le(1+\tfrac{\sqrt{3}}{2}) \tilde{d}(\hy,\hat \xi), (1- \tfrac{\sqrt{3}}{2}) \tilde{d}(\hat \xi,\hy) \le \tilde{d}(\hat \eta ,\hy)  \\
&   \qquad  \, \text{and} \ (1- \tfrac{\sqrt{3}}{2}) \tilde{d}(\hat \xi,\hy) \le \tilde{d}(\hy ,\hat \zeta) )\\
\le&  C_9 [\partial_2^2 f]_{\alpha} \left( \delta \int_{\sqrt{3}\delta}^{+\infty} s^{\alpha-2}  \ ds+ \delta^2 \int_{\sqrt{3}\delta}^{+\infty} s^{\alpha-3}  \ ds  \right)= C_9[\partial_2^2 f]_{\alpha} \delta^{\alpha},
\end{align*}
where in the last inequality we used the coarea formula and the equation \eqref{eq:surfacemeasureofball0}. Collecting terms we gain 
\begin{equation}
\label{eq:he22}
|\partial_2^2 K(f) (\hx)-\partial_2^2 K(f) (\hz)| \le C_{10} \left( R^{-\alpha} \|\partial_2^2  f\|_{\infty}+[\partial_2^2 f]_{\alpha} \right) \tilde{d}(\hx,\hz)^{\alpha}.
\end{equation}
Then choosing  $\hx_0=\hx$ and $R=1$ we have $B_R(\hx_0)=B_1(\hx)$, then  by equations \eqref{eq:he22}  we get 
\begin{equation}
\label{eq:he222}
\begin{aligned}
\frac{|\partial_2^2 K(f) (\hx)-\partial_2^2 K(f) (\hz)|}{\tilde{d}(\hx,\hz)^{\alpha}} &\le C_{11} (\|\partial_2^2  f\|_{\infty}+[\partial_2^2 f]_{\alpha})+ 2 \|\partial_2^2 K(f)\|_{\infty}\\
\text{(by eq.  \eqref{eq:C2e}) }& \le C_{11} (\|\partial_2^2  f\|_{\infty}+[\partial_2^2 f]_{\alpha})+ \tilde{C}_2 \| f\|_{C^{2,\alpha}}\\
&\le C_{12} \| f\|_{C^{2,\alpha}}
\end{aligned}
\end{equation}
Reasoning in the same way as above, we get 
\begin{equation}
\label{eq:he3}
\frac{|\partial_3 K(f) (\hx)-\partial_3 K(f) (\hz)|}{\tilde{d}(\hx,\hz)^{\alpha}} \le C_{13}  \| f\|_{C^{2,\alpha}}
 \end{equation}
Putting together equations \eqref{eq:C2e}, \eqref{eq:he222} and \eqref{eq:he3} we get the desired estimates \eqref{eq:C2ahe}. 
\end{proof}

\subsection{The reflection technique for singular integrals}
\begin{definition}
Let $g$ be a function on $\Pi$ and $x \in \hh^1 \smallsetminus \Pi$. We set 
$$
\tilde K_1(g) (x) = \int_{\Pi} \tilde k_1(x,\hat y) g(\hat y) d\sigma(\hat y), \qquad \tilde K(g) (x) = \int_{\Pi} \tilde k(x,\hat y) g(\hat y) d\sigma(\hat y)
$$
where
\begin{equation}
\label{eq:tidek1}
\tilde k_1(x,y)=
 \dfrac{1}{\pi} \frac{\big( x_1^2 + (x_2-y_2)^2\big) x_1 }
{\Big( \big( x_1^2 + (x_2-y_2)^2\big)^2 
+ 16 \big(x_3 - y_3 \big)^2 \Big)^{3/2}}
\end{equation}
and
\begin{equation}
\label{eq:tidek} \tilde k(x,y)=
 \dfrac{1}{\pi} \frac{(x_2-y_2)(x_3 - y_3  ) }
{\Big( \big( x_1^2 + (x_2-y_2)^2\big)^2 
+ 16 \big(x_3 - y_3 \big)^2 \Big)^{3/2}},
\end{equation}
\end{definition}

\begin{remark}
\label{rk:tiledek}
Notice that $\tilde k(x,y)$ defined in \eqref{eq:tidek} converges to the convolution kernel $k(\hat x-\hat y)$ defined in \eqref{eq:kk} as one approaches the boundary.
\end{remark}

\begin{lemma}\label{tildelimit}
Let  $g$ be  a Lipschitz compactly supported function in 
$\Pi$ and $x_0$ be a point in $\Pi$.  
For $x \in \hh^1 \smallsetminus \Pi$ we consider
$$
\tilde{K}_1(g) (x) = \int_{\Pi} \tilde{k}_1(x,y) g(y) d\sigma(y),
$$
where $\tilde{k}_1$ is defined in \eqref{eq:tidek1}.
Then we  have
\begin{align*}
 &\tilde{K}_1(g) (x) \to \frac{1}{2} g(x_0) \quad \text{ as } x\to x_0^+,\\
 &\tilde{K}_1(g) (x) \to -\frac{1}{2} g(x_0) \quad  \text{ as } x\to x_0^-,
\end{align*}
so that $(K_1)^+=\tfrac{1}{2}\text{Id}$ while restricted to $\Pi$ and $(K_1)^-= - \frac{1}{2} \text{Id}$ while restricted to $\Pi$.
 \end{lemma}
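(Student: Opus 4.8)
The statement to prove is Lemma \ref{tildelimit}, the analogue of Proposition \ref{frompositive} but for the modified (decoupled) kernel $\tilde k_1$ in place of $k_1$. The plan is to mimic the proof of Proposition \ref{frompositive} essentially verbatim, exploiting that on the boundary $\{x_1=0\}$ the two kernels $k_1$ and $\tilde k_1$ agree (since the mixed term $\frac{1}{2}y_1x_2-\frac{1}{2}y_2x_1$ vanishes when $x_1=0$ in the "$x_3-y_3$" slot of $k_1$'s numerator once one also sets $y_1=0$... more precisely, the difference between $k_1$ and $\tilde k_1$ is supported in the sign of the shift $\frac12 y_2 x_1$ inside the denominator, which is $O(x_1)$ and hence harmless in the limit). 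The first step is to fix $R>0$ with $\mathrm{supp}(g)\subset\hat B_R(\hat x_0)$ and, for $x=(x_1,\hat x)$ with $x_1>0$, split
\[
\tilde K_1(g)(x)=\int_{\hat B_R(\hat x_0)}\tilde k_1(x,y)\,(g(y)-g(x))\,d\sigma(y)+g(x)\int_{\hat B_R(\hat x_0)}\tilde k_1(x,y)\,d\sigma(y).
\]

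The second step is to show the first integral tends to $0$: exactly as in Proposition \ref{frompositive}, $|\tilde k_1(x,y)|\le \sqrt{x_1}\,\tilde d(\hat x,\hat y)^{-5/2}$ (the numerator of $\tilde k_1$ is $\le (x_1^2+(x_2-y_2)^2)\,x_1\le \tilde d^{\,2}\cdot x_1$ when $\tilde d\ge |x_1|$, and the denominator is $\gtrsim \tilde d^{\,6}$... one gets the $\sqrt{x_1}$ by the same interpolation used there), so by the Lipschitz bound on $g$ the integrand is $\le L\sqrt{x_1}\,\tilde d(\hat y,\hat x)^{-3/2}$, which is integrable near $\hat x$ and yields a bound $O(\sqrt{x_1})\to 0$ as $x\to x_0$. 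The third step handles the second term $g(x)\int \tilde k_1(x,y)\,d\sigma(y)$. Here I cannot invoke Lemma \ref{ga} directly because that lemma concerns $\nabla_\hh^y\Gamma = k_1+k$, not $\tilde k_1$. Instead I write $\tilde k_1 = (\tilde k_1+\tilde k) - \tilde k$; by Remark \ref{rk:tiledek} and an argument parallel to Lemma \ref{ga}---or, more directly, by observing that $\tilde k_1(x,\cdot)+\tilde k(x,\cdot)$ is obtained from $k_1(x,\cdot)+k(x,\cdot)=X_1^y\Gamma(x,(0,\cdot))$ by the substitution that removes the $\frac12 y_2 x_1$ terms, a change of variables whose Jacobian is $1$ and which preserves the relevant integral in the limit---one concludes $\int_{\hat B_R(\hat x_0)}(\tilde k_1(x,y)+\tilde k(x,y))\,d\sigma(y)\to \frac12$ as $x\to x_0^+$. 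Since $\tilde k$ converges to the convolution kernel $k(\hat x-\hat y)$ of \eqref{eq:kk}, which by Lemma \ref{lm:sypol2} (applied to the constant polynomial $p\equiv 1$ on the symmetric ball $\hat B_R(\hat x_0)$ centered at $\hat x_0$, in the limit $x\to x_0$) integrates to $0$, the term $g(x)\int \tilde k(x,y)\,d\sigma(y)\to 0$. Combining, $\tilde K_1(g)(x)\to \frac12 g(x_0)$.

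The fourth and last step is the limit from below, $x_1<0$: as in Proposition \ref{frompositive}, $\tilde k_1$ defined in \eqref{eq:tidek1} has the sign of $x_1$, so one replaces $\tilde k_1$ by $-\tilde k_1$ and $x_1$ by $-x_1>0$, applies the reflected version of the above ($\hat x\mapsto$ same $\hat x$, $x_1\mapsto -x_1$), and obtains $\tilde K_1(g)(x)\to -\frac12 g(x_0)$ as $x\to x_0^-$. I expect the only genuine obstacle to be the third step: making rigorous that $\int(\tilde k_1+\tilde k)\,d\sigma\to \frac12$ without a ready-made analogue of Lemma \ref{ga} for the modified kernel. The cleanest route is to carry out the same divergence-theorem computation as in Lemma \ref{ga} but noting that $\tilde k_1+\tilde k$, restricted to $\Pi$, equals the restriction of $X_1^y\Gamma(x,(0,y_2,y_3))$ up to the harmless shift $\frac12 y_2 x_1\mapsto 0$ in the denominator; since that shift is $O(x_1)$ and the integrand is dominated uniformly, dominated convergence gives that the modified integral has the same limit $\frac12$ as the original. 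Everything else is a line-by-line transcription of the proof of Proposition \ref{frompositive}, which is why the statement of Lemma \ref{tildelimit} even reuses verbatim the concluding sentence about $(K_1)^\pm$.
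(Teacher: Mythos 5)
Your proof is correct in substance but takes a genuinely different route from the paper's. The paper disposes of Lemma \ref{tildelimit} in one stroke: since Proposition \ref{frompositive} already gives $K_1(g)(x)\to\pm\tfrac12 g(x_0)$, it bounds $|K_1(g)(x)-\tilde K_1(g)(x)|$ by $K_1(g)(x)$ (which stays bounded) times the supremum over $\hat y$ of the deviation of the ratio of the two denominators from $1$, and asserts that this supremum vanishes as $x_1\to 0$. You instead re-run the argument of Proposition \ref{frompositive} from scratch for $\tilde k_1$: split off a Lipschitz remainder, and for the surviving integral $\int\tilde k_1\,d\sigma$ decompose $\tilde k_1 = (\tilde k_1+\tilde k)-\tilde k$, reducing the first summand to Lemma \ref{ga} via the shift $y_3\mapsto y_3-\tfrac12 y_2 x_1$ (Jacobian one) and killing the second by the odd symmetry of the limiting convolution kernel (Lemma \ref{lm:sypol2}). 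Your route is longer but arguably more robust: the paper's supremum bound is delicate near the diagonal, where both denominators are of order $x_1^4$ while the perturbation $\tfrac12 y_2 x_1$ they differ by is only of order $x_1$, so the supremum over the full ball does not in fact tend to zero and a rigorous version of the paper's argument would need to split off a neighbourhood of $\hat x$ — which is exactly what your Lipschitz split provides automatically. Two small points to tighten: in step two the natural pointwise bound is $|\tilde k_1(x,y)|\le \sqrt{x_1}\,\tilde d(\hat x,\hat y)^{-7/2}$, so that $|\tilde k_1|\cdot\tilde d\le\sqrt{x_1}\,\tilde d^{-5/2}$ (this is the quantity the paper actually estimates), not $|\tilde k_1|\le\sqrt{x_1}\tilde d^{-5/2}$; and in step three the change of variables lands you with an integral of $\tilde k_1+\tilde k$ over the shifted domain $\Phi(\hat B_R(\hat x_0))$, so you should record that the symmetric difference with $\hat B_R(\hat x_0)$ sits near the outer boundary, where $\tilde k_1+\tilde k$ is uniformly bounded, and has measure $O(x_1)$, hence is negligible in the limit.
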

\begin{proof}  
By Proposition \ref{frompositive} $K_1 (g) (x)$ converges to $\pm \tfrac{1}{2} g$ and since 
\[
|K_1 (g) (x)- \tilde{K}_1(g)(x)| \leq \sup_{\hat x, \hat y} \left|1- \tfrac{\Big( \big( x_1^2 + (x_2-y_2)^2\big)^2 
+ 16 \big(x_3 - y_3 + \frac{1}{2}y_{2}x_1 \big)^2 \Big)^{3/2}}{\Big( \big( x_1^2 + (x_2-y_2)^2\big)^2 
+ 16 \big(x_3 - y_3 \big)^2 \Big)^{3/2}} \right| K_1(g)(x)
\]
we have $K_1 (g) (x)- \tilde{K}_1(g)(x)$ goes to zero when $x_1$ tends to $0$. Then also
$\tilde{K}_1(g)(x)$ converges to $\tfrac{1}{2} g$ when $x_1\to0^+$ and $\tilde{K}_1(g)(x)$ converges to $-\tfrac{1}{2} g$ when $x_1\to0^-$ .
\end{proof}

Given $r \in \rr$, let us denote $\Pi_r=\{x=(r, x_2, x_3)\}$. We consider the $C^{2, \alpha}(\Pi_r)$ norm with respect to the distance $\tilde{d}$  as we did in Section  \ref{sc:c2alphaestimete}. This choice allows us to completely decouple variables and we have

\begin{proposition}\label{IKnorm}  
Let $\Pi=\{ x_1=0\}$ and $K$ the singular operator defined by the kernel $k$, see \eqref{k}. Then we have
$$||(-\frac{1}{2}I + K)(g)||_{C^{2, \alpha}(\Pi)} = ||(\frac{1}{2}I + K)(g)||_{C^{2, \alpha}(\Pi)}.$$
\end{proposition}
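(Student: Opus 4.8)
The plan is to exploit the reflection symmetry that becomes available precisely because we have chosen to measure the $C^{2,\alpha}$ norm on $\Pi_r$ with respect to the decoupled distance $\tilde d$. Recall from Proposition \ref{prop:K1Kplane} and the surrounding discussion that the double layer potential $\mathcal{D}(g)$ decomposes on $\{x_1>0\}$ and $\{x_1<0\}$ into $K_1(g)+K(g)$, and that the obstruction to a naive reflection is the mixed term $\tfrac12 y_2 x_1$ appearing inside $k(x,\hat y)$ of \eqref{k}. So the first step is to pass to the modified operators $\tilde K_1$ and $\tilde K$ of \eqref{eq:tidek1}--\eqref{eq:tidek}, in which this mixed term has been deleted. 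By Remark \ref{rk:tiledek} the kernel $\tilde k(x,\hat y)$ still converges, as $x_1\to0^\pm$, to the very same convolution kernel $k(\hat x-\hat y)$ of \eqref{eq:kk} that defines $K$ on $\Pi$; and by Lemma \ref{tildelimit} the jump of $\tilde K_1$ is $\pm\tfrac12\mathrm{Id}$ exactly as for $K_1$. Hence the boundary operators built from the modified kernels are precisely $\pm\tfrac12 I + K$, and nothing has been lost by the modification.

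Second, I would observe that the modified kernels $\tilde k_1(x,\hat y)$ and $\tilde k(x,\hat y)$ are, in all three coordinates, built only from the \emph{Euclidean} increments $x_1$, $x_2-y_2$, $x_3-y_3$. Therefore the map $\sigma:(x_1,x_2,x_3)\mapsto(-x_1,x_2,x_3)$ acts transparently on them: $\tilde k_1(\sigma x,\hat y) = -\tilde k_1(x,\hat y)$ (it is odd in $x_1$, as already used in the proof of Proposition \ref{frompositive}), while $\tilde k(\sigma x,\hat y)=\tilde k(x,\hat y)$ (it is even in $x_1$). Consequently, for $x$ with $x_1>0$,
\[
\tilde K_1(g)(\sigma x) = -\tilde K_1(g)(x), \qquad \tilde K(g)(\sigma x) = \tilde K(g)(x).
\]
Letting $x\to x_0\in\Pi$ from the side $x_1>0$, the left-hand sides converge — by Lemma \ref{tildelimit} and Remark \ref{rk:tiledek} — to $-\tfrac12 g(x_0)+K(g)(x_0)$ and $K(g)(x_0)$ respectively, while the right-hand sides converge to $-(\tfrac12 g(x_0)+K(g)(x_0))$ and $K(g)(x_0)$. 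This is the sign-flip that trades $+\tfrac12 I+K$ for $-\tfrac12 I+K$.

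Third, I need to turn this pointwise identity into an equality of $C^{2,\alpha}(\Pi)$ norms. The key point is that the reflection $\sigma$ is an isometry of $\Pi_r$ onto $\Pi_{-r}$ for the distance $\tilde d$ of \eqref{eq:distancetilde}, simply because $\tilde d$ does not see the first coordinate and involves only the Euclidean differences $x_2-y_2$, $x_3-y_3$; and $\sigma$ commutes with the derivatives $\partial_2,\partial_2^2,\partial_3$ that enter the definition of $\|\cdot\|_{2,\alpha}$. Hence, for any fixed $r>0$, the function $x\mapsto \tilde K_1(g)(x)+\tilde K(g)(x)=\mathcal{D}(g)(x)$ restricted to $\Pi_r$ and the function restricted to $\Pi_{-r}$, which by the second step equals $-\tilde K_1(g)+\tilde K(g)$ read off $\Pi_r$ via $\sigma$, have the \emph{same} $C^{2,\alpha}$ norm with respect to $\tilde d$. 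Passing to the limit $r\to0^+$ — which is legitimate here because, thanks to the deletion of the mixed term, the $C^{2,\alpha}$ estimates of Theorem \ref{th:Kcont} (whose kernel is exactly the limit kernel \eqref{eq:kk}) give uniform control, so the norms converge to the norms of $(\tfrac12 I+K)(g)$ and $(-\tfrac12 I+K)(g)$ on $\Pi$ — yields
\[
\|(-\tfrac12 I+K)(g)\|_{C^{2,\alpha}(\Pi)} = \|(\tfrac12 I+K)(g)\|_{C^{2,\alpha}(\Pi)}.
\]

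The main obstacle, and the place where care is genuinely needed, is the \emph{limit passage} $r\to0$: one must check that $\tilde K_1(g)|_{\Pi_r}$ and $\tilde K(g)|_{\Pi_r}$ converge in $C^{2,\alpha}$ (not merely pointwise) to $(\pm\tfrac12 I+K)(g)$ and $K(g)$ respectively, uniformly enough that the norms pass to the limit. For $\tilde K$ this is where the uniform bounds in the style of Theorem \ref{th:Kcont} are used, applied to the family of kernels $\tilde k(x,\cdot)$ with parameter $x_1=r$, exploiting that all the coarea and cancellation estimates (Lemmas \ref{lm:sypol2}, \ref{lm:coarea}) are insensitive to the $x_1$-shift once the mixed term is gone; for $\tilde K_1$ one uses Lemma \ref{tildelimit} together with the fact that, away from the support issues, the approximate-identity behavior is uniform in $r$ on compact sets. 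An alternative, cleaner route that avoids discussing convergence of the full $C^{2,\alpha}$ norm is to work directly at $r=0$: note that the limit operators are literally $\pm\tfrac12 I+K$ with the \emph{same} $K$ (same convolution kernel \eqref{eq:kk}), and that the reflection $\sigma$, being a $\tilde d$-isometry commuting with $\partial_2,\partial_3$, conjugates the pair $(g\mapsto \tfrac12 g+K g)$ into $(g\mapsto -\tfrac12 g+Kg)$ because $K$ itself is $\sigma$-even; this gives the norm identity at one stroke once one records that $\|\sigma^*h\|_{C^{2,\alpha}(\Pi)}=\|h\|_{C^{2,\alpha}(\Pi)}$ and that $\sigma^*\circ K=K\circ\sigma^*$ on $\Pi$, the latter being immediate from evenness of $k(\hat x-\hat y)$ in $x_2-y_2$ and $x_3-y_3$.
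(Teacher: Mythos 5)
Your main argument reproduces the paper's own route quite closely: remove the mixed term to obtain the modified operators $\tilde K_1,\tilde K$, observe that $\tilde k_1$ is odd and $\tilde k$ is even in $x_1$, note that the $C^{2,\alpha}$ norm built from $\tilde d$ is insensitive to the slice level $r$, and pass to the limit $r\to 0^+$ using Lemma~\ref{tildelimit} and Remark~\ref{rk:tiledek}. That is exactly the structure of the paper's short proof of Proposition~\ref{IKnorm}, and your write-up fills in the pointwise parity computation behind the paper's one-line identity. One cautionary remark on your third step: the claim that, for fixed $r>0$, $F|_{\Pi_r}$ and $F|_{\Pi_{-r}}$ have the same $C^{2,\alpha}$ norm is not an automatic consequence of $\sigma$ being a $\tilde d$-isometry of $\Pi_r$ onto $\Pi_{-r}$; those are two genuinely different functions of $(x_2,x_3)$ (namely $\tilde K_1g(r,\cdot)+\tilde Kg(r,\cdot)$ and $-\tilde K_1g(r,\cdot)+\tilde Kg(r,\cdot)$), and the isometry statement only says that both are measured with the same norm, not that the two numbers coincide. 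This is exactly as terse as the paper's own wording, so I am not penalizing you for it, but be aware that it is the spot where the argument is thinnest.

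The ``alternative, cleaner route'' you sketch at the end, however, does \emph{not} work and should be deleted. Two concrete problems. First, the reflection $\sigma(x_1,x_2,x_3)=(-x_1,x_2,x_3)$ fixes $\Pi=\{x_1=0\}$ pointwise, so the pullback $\sigma^*$ is the \emph{identity} operator on functions defined on $\Pi$; conjugating by the identity cannot transform $\tfrac12 I+K$ into $-\tfrac12 I+K$. The sign flip in the proposition is precisely a transverse phenomenon --- it encodes the jump of the double layer potential as one crosses $\Pi$ from $x_1>0$ to $x_1<0$ --- and this information is lost as soon as you sit at $r=0$; that is why the paper, and your own main argument, must work on $\Pi_r$ and $\Pi_{-r}$ for $r>0$ and then take a limit. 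Second, your claim that $k(\hat x-\hat y)$ in \eqref{eq:kk} is ``even in $x_2-y_2$ and $x_3-y_3$'' is false: the kernel is the product $(x_2-y_2)(x_3-y_3)$ divided by an even denominator, hence it is \emph{odd} in each of $x_2-y_2$ and $x_3-y_3$ separately. The cancellation used in Lemma~\ref{lm:sypol2} exploits this oddness, not evenness. So the concluding sentence of the alternative route rests on a trivially true statement ($\sigma^*=\mathrm{Id}$ on $\Pi$ commutes with everything) dressed up with an incorrect parity assertion, and in any case cannot produce the required sign flip.
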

\begin{proof}
Since the $C^{2,\alpha}$ norm on $\Pi_r$  with respect to the distance $\tilde{d}$ are independent on $r$, we have
$$||(\tilde K_1 + \tilde K)(g)(- \cdot, \cdot, \cdot)||_{C^{2, \alpha}(\Pi_{-r})} = ||(\tilde K_1 + \tilde K)(g)||_{C^{2, \alpha}(\Pi_r)}.$$
Letting $r$ to 0, and applying Lemma \ref{tildelimit} and Remark \ref{rk:tiledek} we get the thesis.
\end{proof}

\subsection{The method of continuity}
\label{sc:methodcontinuity}
Given $t \in [0,1]$ and $K$ the singular operator with kernel $k$ defined in \eqref{k}, we set 
\[
T_t=\frac{1}{2} I + t K.
\]
Notice that $T_1=\tfrac{1}{2}I + K$.
Let us consider the set
\[
A=\{ t \in [0,1] \ : \ T_t  \ \text{is invertible on} \ C^{2, \alpha}(\Pi)\}
\]
First of all we notice that $A \ne \emptyset$ since  $T_0=\frac{1}{2}I$ is invertible. By Theorem \ref{th:Kcont} we have that $T_t: C^{2, \alpha}(\Pi) \to C^{2, \alpha}(\Pi)$  is continuous, namely there exists a constant $C$ such that 
\begin{equation}
\label{eq:upperTt}
\| T_t(g)\|_{C^{2, \alpha}(\Pi)} \le C \|g\|_{C^{2, \alpha}(\Pi)}.
\end{equation}
\begin{proposition}
\label{pr:contmeth}
With the previous notations, it holds that
\begin{equation}
\label{eq:lowerTt}
\|g\|_{C^{2, \alpha}(\Pi)} \le 2  \| T_t(g)\|_{C^{2, \alpha}(\Pi)}
\end{equation}
\end{proposition}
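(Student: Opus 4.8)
The estimate \eqref{eq:lowerTt} is the lower bound needed to run the method of continuity: combined with the continuity \eqref{eq:upperTt} it shows that $T_t$ is bounded below uniformly in $t\in[0,1]$, so that the set $A$ is both open and closed in $[0,1]$ and hence $T_1=\tfrac12 I+K$ is invertible. The plan is to deduce \eqref{eq:lowerTt} from the reflection identity of Proposition \ref{IKnorm}, applied not only to $K$ but to the whole family $tK$.

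First I would use the trivial algebraic decomposition
\[
g=\Big(\tfrac12 I+tK\Big)g+\Big(\tfrac12 I-tK\Big)g ,
\]
and apply the triangle inequality in $C^{2,\alpha}(\Pi)$, together with $\tfrac12 I-tK=-\big(-\tfrac12 I+tK\big)$, to obtain
\[
\|g\|_{C^{2,\alpha}(\Pi)}\le \big\|T_t(g)\big\|_{C^{2,\alpha}(\Pi)}+\big\|(-\tfrac12 I+tK)(g)\big\|_{C^{2,\alpha}(\Pi)} .
\]
Thus everything reduces to the claim that the two one‑sided operators act with the same $C^{2,\alpha}(\Pi)$ norm, namely
\[
\big\|(-\tfrac12 I+tK)(g)\big\|_{C^{2,\alpha}(\Pi)}=\big\|(\tfrac12 I+tK)(g)\big\|_{C^{2,\alpha}(\Pi)}
\]
for every $t\in[0,1]$.

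This last identity is precisely Proposition \ref{IKnorm} with the singular operator $tK$ (convolution kernel $tk$) in place of $K$. I would note that nothing in the proof of Proposition \ref{IKnorm} is affected by the extra constant $t$: one still works with the interior family $\tilde K_1+t\tilde K$ on the slices $\Pi_r$, the $C^{2,\alpha}(\Pi_r)$ norm measured with the decoupled distance $\tilde d$ of \eqref{eq:distancetilde} is independent of $r$, the reflection $x_1\mapsto -x_1$ interchanges $\Pi_r$ with $\Pi_{-r}$, and letting $r\to0$ the boundary limits $\tilde K_1(g)(x)\to\pm\tfrac12 g$ (Lemma \ref{tildelimit}) and $\tilde K(g)(x)\to K(g)$ (Remark \ref{rk:tiledek}) give $\tilde K_1(g)(x)+t\tilde K(g)(x)\to(\pm\tfrac12 I+tK)(g)$ from the two sides of $\Pi$. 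Hence the equality of norms holds for $tK$, and substituting it into the displayed inequality above yields \eqref{eq:lowerTt}.

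I do not expect any genuinely hard analytic step here: the substantive work is already contained in Theorem \ref{th:Kcont} (the $C^{2,\alpha}$ bound for $K$, hence for $tK$) and in Proposition \ref{IKnorm} (the reflection). The only points that deserve care are bookkeeping ones — that the reflection identity must be invoked for the full one‑parameter family $tK$ and not merely for $t=1$, and that, exactly as in Proposition \ref{IKnorm}, one first establishes it for $g$ Lipschitz with compact support in $\Pi$ and then extends by density to $C^{2,\alpha}(\Pi)$. The conceptual heart, that measuring the Hölder norm on the slices $\Pi_r$ with the translation‑uniform distance $\tilde d$ is exactly what makes the reflection legitimate, is the same idea exploited throughout Section \ref{sc:invert}.
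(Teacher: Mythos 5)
Your proof is correct and follows essentially the same route as the paper: the decomposition $g=(\tfrac12 I+tK)g+(\tfrac12 I-tK)g$, the triangle inequality, the identity $\|(\tfrac12 I-tK)g\|=\|(-\tfrac12 I+tK)g\|$, and the reflection identity of Proposition~\ref{IKnorm}. You are in fact slightly more careful than the paper, which cites Proposition~\ref{IKnorm} (stated for $K$) while actually needing it for $tK$; your observation that the reflection argument on the slices $\Pi_r$ with the $r$-independent distance $\tilde d$ carries over verbatim for the whole family $\tilde K_1+t\tilde K$ fills that small bookkeeping gap.
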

\begin{proof}
Clearly we have
\[
g=\left(\frac{1}{2}I + tK\right) g + \left(\frac{1}{2}I - tK\right)g.
\]
Then  by Proposition \ref{IKnorm} we gain 
\begin{align*}
\|g \|_{C^{2,\alpha}(\Pi)} &\le \| (\tfrac{1}{2}I + t K) g\|_{C^{2,\alpha}(\Pi)} +  \| (\tfrac{1}{2}I - tK) g\|_{C^{2,\alpha}(\Pi)}\\
&= \| (\tfrac{1}{2}I + t K) g\|_{C^{2,\alpha}(\Pi)} +  \| (-\tfrac{1}{2}I + tK) g\|_{C^{2,\alpha}(\Pi)}\\
&= 2\| (\tfrac{1}{2}I + t K) g\|_{C^{2,\alpha}(\Pi)}.
\qedhere
\end{align*}
\end{proof}
By the estimates \eqref{eq:upperTt}, \eqref{eq:lowerTt} and the contraction mapping principle it follows that $A$ is both open and closed. Hence $A=[0,1]$ and $T_1=\tfrac{1}{2} I +  K $ is invertible from $C^{2,\alpha}(\Pi)$ to $C^{2,\alpha}(\Pi)$.

\section{The Poisson kernel and Schauder estimates}
\label{sc:Schest}

In this section we will show the Schauder estimates. First of all we consider the flat case as follows

\begin{theorem} \label{c:schauderGroups} 
Let $\Omega \subset {\mathbb{H}}^1$ be a bounded domain such that $\Omega \subset \{x_1>0\}$.
Let $\bar x\in \partial \Omega $ and assume that there exists  an open neighborhood $V$ of $\bar x $  such that  $  V \cap \partial \Omega \subset   \{x_1=0\}.$  Assume that  
$f \in C^\alpha(\bar \Omega)$ and $g \in  \Gamma^{2, \alpha} _0(\partial \Omega \cap V)$ and $0<\alpha<1$. Denote $u$ the unique solution to
$$\Delta_{\mathbb{H}} u=f\; \text{in}\  \Omega, \quad u= g \text{ on }\,  \partial \Omega.$$
Then
\begin{equation}
	\label{stime}
\|u\|_{C^{2, \alpha}(\bar \Omega)} \leq C (\|g\|_{ \Gamma^{2, \alpha} (\partial \Omega)} + \|f\|_{C^\alpha(\bar \Omega)}).\end{equation}
\end{theorem}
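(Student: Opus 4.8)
The strategy is to reduce the inhomogeneous Dirichlet problem to the homogeneous one and then use the invertibility of $\tfrac12 I + K$ established in Section \ref{sc:invert} to produce a Poisson kernel that furnishes the solution explicitly, together with the H\"older estimates for the operators involved. First I would split $u = u_f + u_g$, where $u_f$ solves $\Delta_{\mathbb{H}} u_f = f$ in $\Omega$ with $u_f = 0$ on $\partial\Omega$, and $u_g$ solves $\Delta_{\mathbb{H}} u_g = 0$ in $\Omega$ with $u_g = g$ on $\partial\Omega$. For $u_f$ one invokes the \emph{interior} Schauder estimates (Folland's estimates for the sub-Laplacian), which give $\|u_f\|_{C^{2,\alpha}} \le C\|f\|_{C^\alpha(\bar\Omega)}$; this part does not see the boundary geometry since $f \in C^\alpha$ only enters through interior regularity, as already remarked after \eqref{eq:PP}. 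Thus everything reduces to the homogeneous problem with datum $g$.

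For the homogeneous part, since $V\cap\partial\Omega\subset\{x_1=0\}=\Pi$ and $g$ is compactly supported in this flat non-characteristic portion, I would set $\varphi = (\tfrac12 I + K)^{-1} g$, which is well defined and satisfies $\|\varphi\|_{C^{2,\alpha}(\Pi)} \le 2\|g\|_{C^{2,\alpha}(\Pi)}$ by Proposition \ref{pr:contmeth} (with $t=1$) and the equivalence $C^{2,\alpha}(\Pi) = \Gamma^{2,\alpha}(\Pi)$ from Proposition \ref{Pr:C=Gamma}. Then I would define the candidate solution $u_g(x) = \mathcal{D}(\varphi)(x)$ for $x \in \Omega = \{x_1>0\}$. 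By construction $\Delta_{\mathbb{H}} \mathcal{D}(\varphi) = 0$ in $\Omega$, and by the jump formula (the theorem in Section \ref{sc:dlp}), $\lim_{x\to x_0^+}\mathcal{D}(\varphi)(x) = \bigl(\tfrac12 I + K\bigr)(\varphi)(x_0) = g(x_0)$ for every $x_0\in\Pi$, so $u_g$ attains the boundary datum. Uniqueness of the Dirichlet problem (maximum principle, available under the standing assumptions on $\Omega$) identifies this with the solution $u_g$. The estimate $\|u_g\|_{C^{2,\alpha}(\bar\Omega)} \le C\|\varphi\|_{C^{2,\alpha}(\Pi)} \le C\|g\|_{\Gamma^{2,\alpha}(\partial\Omega)}$ then follows by combining the continuity of $\mathcal{D}$ up to the boundary (which is where one uses $\|K(f)\|_{C^{2,\alpha}}\le C\|f\|_{C^{2,\alpha}}$ of Theorem \ref{th:Kcont} together with the analogous control on $K_1$ and its limit $\tfrac12\mathrm{Id}$) with interior estimates for the harmonic function $\mathcal{D}(\varphi)$ away from $\partial\Omega$.

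Adding the two pieces yields \eqref{stime}. The main obstacle I expect is establishing that $u_g = \mathcal{D}(\varphi)$ is genuinely $C^{2,\alpha}$ \emph{up to the boundary} and not merely that its boundary trace has the right regularity: one needs uniform $C^{2,\alpha}(\bar\Omega)$ bounds, which requires showing that the tangential second-order and vertical first-order derivatives of $\mathcal{D}(\varphi)$ extend continuously and H\"older-continuously to $\Pi$, with the estimates controlled by $\|\varphi\|_{C^{2,\alpha}}$. This is precisely the content encoded in the jump relations together with Theorem \ref{th:Kcont}, but assembling the normal-direction behavior near $\Pi$ (where $K_1$ is totally degenerate on $\Pi$ yet contributes $\tfrac12\mathrm{Id}$ in the limit, per Proposition \ref{frompositive}) into a clean boundary Schauder bound is the delicate step. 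A secondary technical point is checking that the compactly supported hypothesis on $g$ within $V\cap\partial\Omega$ lets one legitimately work on the model half-space $\{x_1>0\}$ rather than the curved $\Omega$; since here $\partial\Omega$ coincides with $\Pi$ on $V$, this reduction is immediate and no flattening (and hence no compact error operator $K_{\hat R}$) is needed for this flat case.
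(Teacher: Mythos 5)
Your proposal correctly handles the splitting $u = u_f + u_g$ and the use of $\tfrac12 I + K$ on the flat model, but there is a genuine gap in the passage from the half-space to the bounded domain $\Omega$. You set $\varphi = (\tfrac12 I + K)^{-1}g$ and $u_g = \mathcal{D}(\varphi)$, and you correctly observe that $\mathcal{D}(\varphi)$ is $\Delta_{\mathbb{H}}$-harmonic in $\{x_1>0\}$ with boundary trace $g$ on $\Pi = \{x_1 = 0\}$. But $\Omega$ is a \emph{bounded} domain contained in $\{x_1 > 0\}$, so $\partial\Omega$ is not contained in $\Pi$; only $V\cap\partial\Omega$ lies on $\Pi$. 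On the remaining portion $\partial\Omega\smallsetminus V$, the datum is $g = 0$ (since $g$ is compactly supported in $V\cap\partial\Omega$), yet $\mathcal{D}(\varphi)$ generally does not vanish there. Your appeal to uniqueness to identify $\mathcal{D}(\varphi)$ with $u_g$ therefore fails: $\mathcal{D}(\varphi)|_{\Omega}$ solves the wrong Dirichlet problem.

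You dismiss this as ``immediate'' and point out that no flattening operator $K_{\hat R}$ is needed. That is true, but $K_{\hat R}$ is not the correction in question here. What is missing is precisely the operator $K_{\partial\Omega}(g) := g - \mathcal{P}_0(g)|_{\partial\Omega}$, which the paper introduces: it vanishes on $V\cap\partial\Omega$ (where $\mathcal{P}_0(g)$ reproduces $g$) and equals $-\mathcal{P}_0(g)$ on $\partial\Omega\smallsetminus V_0$ (where $g$ vanishes but the double layer potential does not). Since the kernel of $\mathcal{P}_0$ has no singularity when the observation point is off $\Pi$, $K_{\partial\Omega}$ is compact, so $I - K_{\partial\Omega}$ is invertible. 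The correct Poisson kernel for $\Omega$ is then $\mathcal{P}(g) = \mathcal{P}_0\bigl((I - K_{\partial\Omega})^{-1}g\bigr)$, not $\mathcal{P}_0(g)$ alone. Without this step, the function you construct does not solve the Dirichlet problem on $\Omega$, and the Schauder estimate cannot be concluded. Once that correction is inserted, the rest of your argument (continuity of $\mathcal{D}$, Proposition \ref{pr:contmeth}, Proposition \ref{Pr:C=Gamma}, interior estimates for $u_f$) assembles exactly as in the paper.
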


\begin{proof}
Let $\mathcal{D}$ be the double layer potential on $\Pi$ defined in \eqref{eq:DgPi}  and $K$ be  the operator  with convolution kernel $k(\hat x-\hat y)$ defined in \eqref{eq:kk}. Therefore we define
$$\mathcal{P}_0(g)(x) = \mathcal{D} ( (\tfrac{1}{2}I+K)^{-1}g) (x)=\int_{\Pi} ((\tfrac{1}{2}I+K)^{-1}g)(\hat y) (k_1(x,\hat y)+ k(x,\hat y)) d\hat y.$$
Then $u=\mathcal{P}_0(g)$ satisfies 
\begin{equation}
\begin{cases}
\Delta_{\mathbb{H}} u=0 & \text{in} \quad \Omega \\
u=g & \text{on} \quad  \{x_1=0\},
\end{cases}
\label{P}
\end{equation}
since $\Omega \subset \{x_1>0\}$.
For each $g \in  \Gamma^{2, \alpha} _0(\partial \Omega \cap V)$ we set $$K_{\partial \Omega} (g)=I(g)- \mathcal{P}_0(g).$$
If we choose $V_0 $ such that $supp (g) \subset \subset V_0\subset \subset V$, then 
$K_{\partial \Omega} (g)(x) = 0 $ for every $x \in \partial \Omega \cap V$.
On the other side  $g(x)=0$ for $ x \in \partial \Omega \smallsetminus V_0$, thus we have
\begin{align*}
K_{\partial \Omega} (g)( x) &= \mathcal{P}_0(g) (x)=  \mathcal{D} ( (\tfrac{1}{2}I+K)^{-1}g) (x)\\
&=\int_{\Pi} ((\tfrac{1}{2}I+K)^{-1}g)(\hat y) (k_1(x,\hat y)+ k(x,\hat y)) d\hat y.
\end{align*}
then the kernel defining $K_{\partial \Omega}$ has no singularities when  
$ x \in\partial \Omega \smallsetminus \{x_1=0\}$.
As a consequence $K_{\partial \Omega}$ is a compact operator, and $I-K_{\partial \Omega}$ can be 
explicitly inverted. 
Then we set  
\begin{equation}
\label{eq:Pg}
\mathcal{P}(g) = \mathcal{P}_0((I-K_{\partial \Omega})^{-1} g)= \mathcal{D} ( (\tfrac{1}{2}I+K)^{-1}(I-K_{\partial \Omega})^{-1}g)
\end{equation}
then $\mathcal{P}(g)$ is a Poisson kernel such that $u=\mathcal{P}(g)$ satisfies 
\begin{equation}
\begin{cases}
\Delta_{\mathbb{H}} u=0 & \text{in} \quad \Omega \\
u=g & \text{on} \quad  \partial \Omega.
\end{cases}
\label{P}
\end{equation}
Since  $\mathcal{D}$ is continuous from $C^{2,\alpha}(\Pi)$ into $C^{2,\alpha}(\{x_1>0\})$ (see for instance \cite[Main Lemma 13.12]{GreinerStein} or \cite{NagelStein79}),   $(\tfrac{1}{2}I+K)^{-1}$ is continuous from  $C^{2,\alpha}(\Pi)$ into $C^{2,\alpha}(\Pi)$ thanks to Section \ref{sc:methodcontinuity}, {$C^{2,\alpha}(\Pi)$ coincides with $\Gamma^{2,\alpha}(\Pi)$ by Proposition \ref{Pr:C=Gamma} and $(I-K_{\partial \Omega})^{-1}$ is continuous from $\Gamma^{2,\alpha}(\partial \Omega)$ into $\Gamma^{2,\alpha}(\Pi)$}  we get  $u=\mathcal{P}(g)$ defined in \eqref{eq:Pg} verifies 
\[
\| u\|_{C^{2,\alpha}(\Omega)}\le C \|g\|_{\Gamma^{2,\alpha}(\partial \Omega)}.
\]
On the other hand by the interior estimates (see for instance \cite{MR1135924}) we have that the solution $v$ of 
\begin{equation}
\begin{cases}
\Delta_{\mathbb{H}} v=f & \text{in} \quad \Omega \\
v=0 & \text{on} \quad  \partial \Omega.
\end{cases}
\label{L}
\end{equation}
verifies 
\[
\| v\|_{C^{2,\alpha}(\Omega)}\le \tilde{C} \|f\|_{C^{\alpha}( \Omega)}.
\]
Hence considering  the function $u+v$ instead of $u$  we get the thesis.
\end{proof}

Therefore we have solved the problem assuming that the boundary is locally a plane.  Now we have to flatten the boundary and extend the result to general boundaries.

\begin{theorem} \label{c:schauder2} 
Let $\Omega \subset {\mathbb{H}}^1$ be a bounded domain and $u$ is the unique
solution to
$$\Delta_{\mathbb{H}} u=f\; \text{in}\  \Omega, \quad u= g \text{ on }\,  \partial \Omega, $$
where $f \in C^\alpha(\bar \Omega)$ and $g \in \Gamma^{2, \alpha} (\partial \Omega)$ and $0<\alpha<1$. 
Let $\bar x\in  \partial\Omega $ be a non-charateristic point, $V\subset \hh^1$ be an open neighborhood of $\bar x $ without charateristic points and $\phi\in C^\infty_0(V)$ be a bump function equal to $1$ in neighborhood $V_0 \subset \subset V$ of $\hat x$. Then we have 
$\phi u \in C^{2, \alpha}(\bar \Omega\cap V)$ and 
\begin{equation}
	\label{stime}
\|\phi u\|_{C^{2, \alpha}(\bar \Omega\cap V)} \leq C (\|g\|_{\Gamma^{2, \alpha} (\partial \Omega)} + \|f\|_{C^\alpha(\bar \Omega)}).\end{equation}
\end{theorem}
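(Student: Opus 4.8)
The plan is to reduce the curved case to the flat model of Theorem~\ref{c:schauderGroups} by flattening the boundary near $\bar x$ with the exponential coordinates of the second kind, and to absorb the cost of this flattening into a \emph{compact} perturbation of the operator $\tfrac12 I+K$ inverted in Section~\ref{sc:methodcontinuity}. First I would remove $f$: pick $\chi\in C_0^\infty(\hh^1)$ equal to $1$ on $\bar\Omega$ and set $v=\hat\Gamma*(\chi f)$ (convolution with respect to the group law), so that $\Delta_\hh v=f$ on $\Omega$; since $\chi f\in C^\alpha$ has compact support, the interior Schauder estimates for the sub-Laplacian on the whole group (see \cite{MR1135924}) give $v\in C^{2,\alpha}(\hh^1)$ with $\|v\|_{C^{2,\alpha}}\le C\|f\|_{C^\alpha(\bar\Omega)}$, and in particular $v|_{\partial\Omega}\in\Gamma^{2,\alpha}(\partial\Omega)$. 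Replacing $u$ by $u-v$ and $g$ by $g-v|_{\partial\Omega}$, we may assume $\Delta_\hh u=0$ in $\Omega$, at the cost of the stated $\|f\|_{C^\alpha}$ term on the right-hand side of \eqref{stime}.

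Next I would flatten: since $\bar x$ is non-characteristic, the map $\Xi_{\bar x}$ of \eqref{eq:expexp} is a diffeomorphism of a neighbourhood $V'\subset V$ of $\bar x$ onto a neighbourhood of the origin, carrying $V'\cap\partial\Omega$ into $\Pi=\{v_1=0\}$ and $V'\cap\Omega$ into $\{v_1>0\}$. Pushing forward, $\mathcal{L}:=(\Xi_{\bar x})_*\Delta_\hh=\Delta_\hh+\mathcal{R}$, where, because the frame $\nu_h,Z,S$ used in \eqref{eq:expexp} is orthonormal and degree-matched, the second order operator $\mathcal{R}$ has coefficients vanishing on $\Pi$. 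In these coordinates I would build the double layer potential $\mathcal{D}$ of \eqref{eq:DgPi} from the fundamental solution $\Gamma$ of $\Delta_\hh$ (not of $\mathcal{L}$), together with a volume-potential correction compensating the error $\mathcal{R}\mathcal{D}(g)$; combining the curvature of $\partial\Omega$ in the new coordinates with the contribution of $\mathcal{R}$, the interior limit of this modified potential on $\Pi$ takes the form $\tfrac12 g+Kg+K_{\hat R}g$, where $K$ is the convolution operator with kernel \eqref{eq:kk} and the remainder $K_{\hat R}$ has a kernel one degree less singular than $k$. By Lemma~\ref{lm:coarea} (with $j\ge1$) and the argument of Theorem~\ref{th:Kcont}, $K_{\hat R}$ maps $\Gamma^{2,\alpha}$ into a strictly better Hölder class, hence is compact on $\Gamma^{2,\alpha}(\Pi)=C^{2,\alpha}(\Pi)$ (Proposition~\ref{Pr:C=Gamma}); its norm can moreover be made arbitrarily small by shrinking $V'$.

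By Section~\ref{sc:methodcontinuity}, $\tfrac12 I+K$ is invertible on $C^{2,\alpha}(\Pi)$; since $K_{\hat R}$ is compact (or of small norm), $\tfrac12 I+K+K_{\hat R}$ is invertible as well, via the Fredholm alternative together with injectivity — which follows from the maximum principle for $\mathcal{L}$ exactly as in the flat case — or directly by a Neumann series after shrinking $V'$. Setting $\mathcal{P}_{V'}(h)=\mathcal{D}\big((\tfrac12 I+K+K_{\hat R})^{-1}h\big)$ plus the volume correction, and arguing as in the proof of Theorem~\ref{c:schauderGroups}, for $h\in\Gamma^{2,\alpha}_0(\Pi)$ supported near the origin the function $\mathcal{P}_{V'}(h)$ is $\mathcal{L}$-harmonic in $\{v_1>0\}$ near the origin, attains the datum $h$ on $\Pi$, and — by the continuity of the double layer potential from $C^{2,\alpha}(\Pi)$ into $C^{2,\alpha}(\{v_1>0\})$ (\cite{GreinerStein,NagelStein79}) composed with the bounded inverse — obeys $\|\mathcal{P}_{V'}(h)\|_{C^{2,\alpha}}\le C\|h\|_{\Gamma^{2,\alpha}}$. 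Applying this with $h=\phi\,(g\circ\Xi_{\bar x}^{-1})$ and pulling back by $\Xi_{\bar x}$ produces $\tilde u\in C^{2,\alpha}(\bar\Omega\cap V_0')$ for some $V_0\subset\subset V_0'\subset\subset V$, with $\tilde u$ harmonic in $\Omega\cap V_0'$, equal to $g$ on $\partial\Omega\cap V_0'$, and $\|\tilde u\|_{C^{2,\alpha}}\le C\|g\|_{\Gamma^{2,\alpha}(\partial\Omega)}$. Then $u-\tilde u$ is $\Delta_\hh$-harmonic in $\Omega\cap V_0'$ with vanishing Dirichlet data on the non-characteristic portion $\partial\Omega\cap V_0'$, hence smooth up to the boundary there by Kohn–Nirenberg \cite{KN65}; in particular $u-\tilde u\in C^{2,\alpha}(\bar\Omega\cap V_0)$ with a bound of the same type. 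Therefore $\phi u=\phi\tilde u+\phi(u-\tilde u)\in C^{2,\alpha}(\bar\Omega\cap V)$, and collecting all constants together with the $f$-contribution from the first step yields \eqref{stime}.

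I expect the main obstacle to be the perturbed jump formula: producing, on the flattened but still curved domain, the correct interior limit of the modified double layer potential for $\mathcal{L}$, and verifying \emph{quantitatively} that the combined curvature-plus-$\mathcal{R}$ correction $K_{\hat R}$ is one order less singular than $k$ — so that it is compact (or small) on $\Gamma^{2,\alpha}$ — while simultaneously controlling the volume potential that compensates $\mathcal{R}\mathcal{D}(g)$. Unlike the flat model of Sections~\ref{sc:dlp}--\ref{sc:invert}, no reflection symmetry is available, so everything must rest on the already-established invertibility of $\tfrac12 I+K$ plus the soft perturbation argument above.
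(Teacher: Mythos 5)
Your overall architecture agrees with the paper's: flatten near the non-characteristic point, show that the resulting boundary operator is $\tfrac12 I + K$ plus a compact perturbation $K_{\hat R}$, invert, build the Poisson kernel and pull back. But the route through the flattening is genuinely different, and in one place substantially more complicated than what the paper does. The paper chooses the shear $\Xi(s_1,\hat s)=(s_1-w(\hat s),\hat s)$, whose Jacobian is $1$ and which has the key feature that the \emph{exact} fundamental solution of the pushed-forward operator $\Delta_\Xi=d\Xi(\Delta_{\hh})$ is simply $\Gamma_\Xi(x)=\Gamma(x_1+w(\hat x),\hat x)$. Because $w(\hat s)=O(|\hat s|^2)$, Taylor-expanding $\Gamma_\Xi=\Gamma+R$ with $R$ of order $d^{-Q+2}$ directly produces the boundary kernel $k_1+k+\hat R$ with $\hat R\lesssim\hat d^{-Q+2}$, hence $K_{\hat R}$ compact on the $(Q-1)$-homogeneous boundary. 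No volume potential is needed at all: the double layer potential built from $\Gamma_\Xi$ is already $\Delta_\Xi$-harmonic.

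You instead flatten by the exponential coordinates $\Xi_{\bar x}$ of \eqref{eq:expexp}, write $\mathcal{L}=(\Xi_{\bar x})_*\Delta_\hh=\Delta_\hh+\mathcal{R}$, and use the double layer potential of $\Delta_\hh$ plus a volume-potential correction for $\mathcal{R}\mathcal{D}(g)$. Two things here are unverified and would take real work: (i) the assertion that $\mathcal{R}$ has coefficients vanishing on $\Pi$ is plausible but not established for your choice of coordinates, and without it the singularity count for $K_{\hat R}$ does not come out for free; (ii) the volume-potential correction introduces a coupled system (the trace of the volume potential contributes a further operator on $\Pi$), whose mapping properties and compactness you have not analyzed. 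The paper sidesteps both issues entirely by working with the exact fundamental solution of the pushed-forward operator, so if you want to keep your coordinates you should at least imitate that step: use $\Gamma\circ\Xi_{\bar x}^{-1}$ (the fundamental solution of $\mathcal{L}$) rather than $\Gamma$, which eliminates the volume potential and reduces the problem to Taylor-expanding one fixed kernel.

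Two places where your version is tighter than the paper's. First, you explicitly note two routes to invertibility of $\tfrac12 I+K+K_{\hat R}$ (Fredholm plus injectivity via the maximum principle, or a Neumann series after shrinking the neighborhood so $\|K_{\hat R}\|$ is small); the paper only appeals to compactness and does not address injectivity of $I+(\tfrac12 I+K)^{-1}K_{\hat R}$. Second, you correctly observe that the Poisson-kernel construction produces a function $\tilde u$ that agrees with $u$ only on the boundary patch, and you compare $u-\tilde u$ via Kohn--Nirenberg; the paper redefines $u$ midway and never returns to the original solution, which is a genuine gap you are filling. One caveat: Kohn--Nirenberg gives regularity, not an a priori bound, so to get a quantitative estimate for $u-\tilde u$ you should combine it with the maximum principle (to bound $\|u-\tilde u\|_\infty$) and an interior-to-the-boundary a priori estimate on a slightly smaller patch.
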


\begin{proof}

Let us denote by $\Omega$ a smooth, open bounded set in $\hh^1$ and let 
$0\in \partial \Omega$ be a non characteristic point.  
The boundary of $\Omega$ can be identified in a neighborhood $V$ with the graph of a regular 
function $w$, defined on a neighborhood $\hat V=V\cap \rr^{2}$ of $0$: 
$$\partial \Omega \cap V= \{(w(\hat s), \hat s): \hat s\in \hat V  \}.$$
We can as well assume that $w(0) = 0$,  $\nabla w=0$. This implies that 
\begin{equation}\label{tuttoqui}
w(\hat s) = O(|\hat s|^2) 
\end{equation}
as $\hat s \to 0$. 
On the set $V$ the function 
$\Xi(s_1, \hat s) = (s_1 - w(\hat s), \hat s) $
is a diffeomorphism. It sends  $\partial \Omega\cap V$ to a subset of the plane $\{x_1 =0\}$:
$$\Xi(\partial \Omega \cap V) =\{(x_1, \hat x): x_1 =0\}= \Pi_{\Xi}.$$
Moreover, we have 
\begin{equation}\label{ohscusa}\Delta_\Xi =  d\Xi(\Delta_{\hh^1}), \end{equation}
with fundamental solution 
$$ \Gamma_\Xi(x) = \Gamma(x_1 + w(\hat x), \hat x).$$
For $x_1$ small enough we have
$$ \Gamma_\Xi(x_1, \hat x) = \Gamma(x_1+w(\hat x), \hat x)
= \Gamma(x_1, \hat x)
 +  R(x_1, \hat x), $$
 where 
 $$  R(x_1,\hat x)=w(\hat x) \nabla \Gamma( x_1 +t w(\hat x), \hat x) $$
 for some $t \in (0,1)$. Furthermore we have that 
 $$X_{1,\Xi}^x=d\Xi(X_1^x)=\partial_{x_1}-\tfrac{x_2}{2} \partial_{x_3} w(\hat x) \partial_{x_1}-\tfrac{x_2}{2} \partial_{x_3}.$$
 Notice that $\Gamma$ is a rational function that goes as $d^{-Q+2}$, its first derivatives go as $d^{-Q+1}$ and its second derivatives go as $d^{-Q}$. On the other side the function $w(\hat x)$ has a 0 of order 2 thus $w(\hat x)$ goes as $d^{2}$. Then we have 
 \[
 X_{1,\Xi}^y \Gamma_\Xi(0, \hat y)=X_1^y \Gamma (0,\hat y)+ \hat R(0,\hat y),
 \]
 where 
 \[
 \hat R(0,\hat y)=X_1^y R(0,\hat y)-\tfrac{y_2}{2} \partial_{y_3} w(\hat y) \partial_{y_1} R(0,\hat y) -\tfrac{y_2}{2} \partial_{y_3} w(\hat y) \partial_{y_1} \Gamma(0,\hat y)
 \]
 that goes as 
 \[
 |\hat R(0,\hat x)| \le \hat d^{-Q+2}(0,\hat x),
 \]
 where $\hat d$ is the induce distance.
Therefore the operator  $K_{\hat R}$ with kernel $\hat R$ is compact since the homogenous dimension of the boundary is $Q-1$. Therefore also thanks to the left-invariance of the distance and of the fundamental solution  we get that the double layer potential 
\[
\mathcal{D}(\phi g)(x)= \int _{\Pi_{\Xi}} X_{1,\Xi}^y \Gamma_\Xi(x, \hat y) ( \phi g)(\hat y) d \hat y= K_1(\phi g)(x)+K(\phi g)(x)+K_{\hat R}(\phi g)(x)
\]
converges to $(\frac{1}{2} I +K + K_{\hat R})(\phi g)(x_0) $ in the limit $x \to x_0^+$ from positive values of $x_1$.  In the previous equation $K_1$ and $K$ are the operator defined in Proposition \ref{prop:K1Kplane}, $x_0=(0,\hat x_0) \in \hat V$. With an abuse of notation we denote in the same way the function $\phi g$ compactly supported in $V \cap \partial \Omega$ and the function $\phi g \circ \Xi^{-1}$ compactly supported on $\Pi_{\Xi}$. Since $\frac{1}{2} I +K$  is invertible, $$(\frac{1}{2} I +K + K_{\hat R})(x_0) =(\frac{1}{2} I +K )(I   +  (\frac{1}{2} I +K )^{-1}K_{\hat R})(x_0) $$ 
Since $K_{\hat R}$ is compact and $(\frac{1}{2} I +K )^{-1}$ is bounded, 
then $(\frac{1}{2} I +K )^{-1}K_{\hat R}$ is  compact, so that 
$(I   +  (\frac{1}{2} I +K )^{-1}K_{\hat R})$ is invertible, thus $\frac{1}{2} I +K + K_{\hat R}$ is invertible. Therefore we define
$$\mathcal{P}(\phi g)(x) = \mathcal{D} ( (\tfrac{1}{2}I+K+K_{\hat R})^{-1} \phi g) (x).$$
Then $u=\mathcal{P}(\phi g)$ satisfies 
\begin{equation}
\begin{cases}
\Delta_{\Xi} u=0 & \text{in} \quad \{x_1>0\} \\
u=\phi g & \text{on} \quad  \{x_1=0\}.
\end{cases}
\label{PP}
\end{equation}
In particular, since  $\phi=1$ on $V_0$ we have that $\phi u=\phi \mathcal{P}(\phi g)$ solves 
\begin{equation*}
\begin{cases}
\Delta_{\Xi} (\phi u) =0 & \text{in} \quad \Xi^{-1}(V_0 \cap \Omega) \\
\phi u=\phi g & \text{on} \quad  \Xi^{-1}(V_0 \cap \partial \Omega),
\end{cases}
\end{equation*}
thus changing variables and noticing that $\phi=1$ on $V_0$ we gain that $\mathcal{P}(\phi g)\circ \Xi$ solves 
\begin{equation}
\begin{cases}
\Delta_{\hh^1}u=0 & \text{in} \quad V_0 \cap \Omega \\
 u= g & \text{on} \quad  V_0 \cap \partial \Omega.
\end{cases}
\label{PP}
\end{equation} 
Since  $\mathcal{D}$ is continuous from $C^{2,\alpha}(\Pi_{\Xi})$ into $C^{2,\alpha}(\{x_1>0\})$ (see for example \cite[Main Lemma 13.12]{GreinerStein} or \cite{NagelStein79}),   $(\tfrac{1}{2}I+K+K_{\hat R})^{-1}$ is continuous form  $C^{2,\alpha}(\Pi_{\Xi})$ into $C^{2,\alpha}(\Pi_{\Xi})$, { $C^{2,\alpha}(\Pi_{\Xi})$ coincides with $\Gamma^{2,\alpha}(\Pi_{\Xi})$ by Proposition \ref{Pr:C=Gamma} } and $\Xi$ is a smooth diffeomorphism  we get  that $\phi u=\phi \mathcal{P}(\phi g)\circ \Xi$ verifies 
\[
\| \phi u\|_{C^{2,\alpha}(V \cap 	\bar{\Omega})}\le C \|\phi g\|_{\Gamma^{2,\alpha}(\partial \Omega)} \le C \|g\|_{\Gamma^{2,\alpha}(\partial \Omega)}.
\]
On the other hand by the interior estimates (see for instance \cite{MR1135924}) we have that the solution $v$ of 
\begin{equation}
\begin{cases}
\Delta_{\mathbb{H}} v=f & \text{in} \quad \Omega \\
v=0 & \text{on} \quad  \partial \Omega.
\end{cases}
\label{L}
\end{equation}
verifies 
\[
\| v\|_{C^{2,\alpha}(\bar{\Omega})} \le \tilde{C} \|f\|_{C^{\alpha}( \bar{\Omega})}.
\]
Hence considering  the function $u+v$ instead of $u$  we get the thesis.
\end{proof}

\begin{corollary} \label{c:schauder3} 
Let $\Omega \subset {\mathbb{H}}^1$ be a bounded domain without characteristic points 
and let $u$ is the unique
solution to
$$\Delta_{\mathbb{H}} u=f\; \text{in}\  \Omega, \quad u= g \text{ on }\,  \partial \Omega, $$
where $f \in C^\alpha(\bar \Omega)$ and $g \in \Gamma^{2, \alpha} (\partial \Omega)$ and $0<\alpha<1$. 
We have 
\begin{equation}
	\label{stime}
\|u\|_{C^{2, \alpha}(\bar \Omega)} \leq C (\|g\|_{\Gamma^{2, \alpha} (\partial \Omega)} + \|f\|_{C^\alpha(\bar \Omega)}).\end{equation}
\end{corollary}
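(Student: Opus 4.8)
The plan is to deduce the global estimate from the local boundary estimate of Theorem~\ref{c:schauder2} together with the interior Schauder estimates for $\Delta_{\hh}$, patching them with a partition of unity; the point that makes the covering argument work is that $\partial\Omega$ is compact and, by hypothesis, contains no characteristic point.

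First I would construct a finite cover of $\bar\Omega$. For each $\bar x\in\partial\Omega$, which is non-characteristic, choose as in Theorem~\ref{c:schauder2} an open neighbourhood $V_{\bar x}\subset\hh^1$ free of characteristic points, a smaller neighbourhood $V^0_{\bar x}\Subset V_{\bar x}$, and a bump function $\phi_{\bar x}\in C^\infty_0(V_{\bar x})$ equal to $1$ on $V^0_{\bar x}$. The family $\{V^0_{\bar x}\}_{\bar x\in\partial\Omega}$, together with $\Omega$ itself, is an open cover of the compact set $\bar\Omega$; extract a finite subcover consisting of boundary patches $V^0_1,\dots,V^0_N$ (with associated cutoffs $\phi_1,\dots,\phi_N$) and interior balls $B_1,\dots,B_M$ with $B_j\Subset\Omega$. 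Let $\{\phi_i\}_{i=1}^N\cup\{\psi_j\}_{j=1}^M$ be a smooth partition of unity subordinate to this cover, with $\operatorname{supp}\phi_i\subset V_i$ and $\operatorname{supp}\psi_j\subset B_j$, so that $u=\sum_{i=1}^N\phi_i u+\sum_{j=1}^M\psi_j u$ on $\bar\Omega$.

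Next I would estimate each summand. For a boundary patch, Theorem~\ref{c:schauder2} — applied with $V=V_i$ and cutoff $\phi_i$ — gives $\phi_i u\in C^{2,\alpha}(\bar\Omega\cap V_i)$, and after extension by zero outside $V_i$,
\[
\|\phi_i u\|_{C^{2,\alpha}(\bar\Omega)}\le C\bigl(\|g\|_{\Gamma^{2,\alpha}(\partial\Omega)}+\|f\|_{C^\alpha(\bar\Omega)}\bigr).
\]
For an interior piece, $\psi_j u$ is compactly supported in $B_j\Subset\Omega$; since $u$ solves $\Delta_{\hh}u=f$ in $\Omega$, the interior Schauder estimates for $\Delta_{\hh}$ (see e.g.\ \cite{MR1135924}), applied on a ball slightly larger than $B_j$ and still compactly contained in $\Omega$, yield
\[
\|\psi_j u\|_{C^{2,\alpha}(\bar\Omega)}\le C\bigl(\|f\|_{C^\alpha(\bar\Omega)}+\|u\|_{L^\infty(\Omega)}\bigr)\le C'\bigl(\|f\|_{C^\alpha(\bar\Omega)}+\|g\|_{C^0(\partial\Omega)}\bigr),
\]
the last step being the maximum principle (comparison with a barrier absorbing $f$). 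Summing these finitely many inequalities over $i$ and $j$, and using that the $C^{2,\alpha}(\bar\Omega)$ norm of $\sum_i\phi_i u+\sum_j\psi_j u$ is controlled by the sum of the norms of the summands, gives the asserted bound. Existence and uniqueness of $u$ are as in the previous sections.

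The hardest part will be purely the bookkeeping for the cutoffs: one needs a Leibniz-type inequality in the anisotropic H\"older scale of the form $\|\phi\,u\|_{C^{2,\alpha}}\le C\|\phi\|_{C^\infty}\|u\|_{C^{2,\alpha}}$ on the overlap regions, which requires keeping track of the non-commutation of $X_1,X_2$ when expanding $X_iX_k(\phi u)$ and when handling the vertical derivative, together with the observation that the finitely many local $C^{2,\alpha}$ estimates genuinely reconstruct the global norm on $\bar\Omega$. Once this standard sub-Riemannian product rule is in hand, the proof is an elementary compactness argument requiring no analytic input beyond Theorem~\ref{c:schauder2} and interior regularity.
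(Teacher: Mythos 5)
Your proof is correct and follows essentially the same patching argument as the paper: cover the boundary by finitely many non-characteristic neighbourhoods, apply Theorem~\ref{c:schauder2} on each local piece, and use interior Schauder estimates away from $\partial\Omega$, combining via a finite partition of unity. The two precisions you add --- controlling $\|u\|_{L^\infty(\Omega)}$ by the maximum principle in the interior estimate, and flagging the Leibniz-type product inequality in the anisotropic H\"older scale needed to glue the cutoffs --- are details the paper's proof leaves implicit but does not replace with anything different.
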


\begin{proof}
We can cover the boundary by a finite number of balls $\{B_i\}_{i=1,\ldots,N}$ and an associated partition of the unity $\phi_1, \ldots,\phi_N$. Then on each ball $B_i$ we have that $\phi_i g$ is compactly supported in $B_i$, then by Theorem 
\ref{c:schauder2} we get 
\[
\| \phi_i u\|_{C^{2,\alpha}(B_i \cap \bar{\Omega})} \le C \|g\|_{C^{2,\alpha}(\partial \Omega)}.
\]
Since we have 
\[
\| u \|_{C^{2,\alpha} (\bar{\Omega})}= \| u \|_{C^{2,\alpha} (\Omega \smallsetminus \medcup_{i=1}^N B_i )} +  \| \sum_{i=1}^N \phi_i u\|_{C^{2,\alpha}(\bar{\Omega})},
\]
we estimate the first term by means of the interior estimates and the second therm as follows
\[
\| \sum_{i=1}^N \phi_i u\|_{C^{2,\alpha}(\bar{\Omega})}\le \sum_{i=1}^N \| \phi_i u\|_{C^{2,\alpha}(B_i \cap \bar{\Omega})} \le N C \|g\|_{\Gamma^{2,\alpha}(\partial \Omega)}.
\]
Hence we get the result.
\end{proof}

\section{Generalization to Heisenberg-type groups}
\label{sc:GHT}
\begin{definition}
A Heisenberg-type algebra is a finite-dimensional real Lie algebra $\mathfrak{g}$ which can be endowed with an inner product $\escpr{\cdot,\cdot}$ such that
\[
[\frak{z}^{\perp},\frak{z}^{\perp}]= \frak{z},
\]
where $\frak{z}$ is the center $\mathfrak{g}$ and moreover, for every fixed $z \in \frak{z}$  the map 
\[
J_z: \frak{z}^{\perp} \to \frak{z}^{\perp}
\]
defined by 
\[
\escpr{J_z(v),w}=\escpr{z,[v,w]} \quad \forall v,w \in \frak{z}^{\perp}
\]
 is an orthogonal map whenever $\escpr{z,z}=1$.
\end{definition}
A Heisenberg-type group $\mathbb{G}$ ($H$-type group, in short) is a connected and simply
connected Lie group whose Lie algebra is an $H$-type algebra.

Let $n,m \in \nn$, $m\ge2$ and $n\ge 1$.
Following \cite[Chapter 18]{BLU} a prototype of $H$-type group $(\rr^{m+n},\delta_{\lambda}, \circ )$  is given by $\rr^{m+n}$ equipped with the group  law
\[
(x,t) \circ (y,\tau)=\left( \begin{array}{cc} x_k+ y_k  & k=1,\ldots,m\\ t_k+\tau_k +\frac{1}{2}\escpr{A^{(k)} x,y } & k=1,\ldots,n
  \end{array} \right)
\]
and the dilation $\delta_{\lambda}(x,t)=(\lambda x, \lambda^2 t)$. Here $A^{(k)}$ is a skew-symmetric orthogonal matrix, such that, 
\begin{equation}
\label{eq:hyA}
A^{(k)} A^{(\ell)}+ A^{(\ell)} A^{(k)}=0,
\end{equation}
for $k=1,\ldots,n$ with $k \ne \ell$. By \cite[Theorem 18.2.1]{BLU} any  $H$-type group is naturally isomorphic to a prototype $H$-group. Therefore we use the notation $\mathbb{G}$ for the prototype $H$-type group $(\rr^{m+n},\delta_{\lambda}, \circ )$ associated to a $H$-type group.  A family of left invariant vector fields that agree with $\tfrac{\partial}{\partial x_j}$ for $j=1,\ldots,m$ at the origin is given by
\[
X_j=\frac{\partial}{\partial x_j}+ \frac{1}{2} \sum_{k=1}^n \left( \sum_{i=1}^m a_{j,i}^{k} x_i \right) \frac{\partial}{\partial t_k}
\]
Setting that $m= \dim(\mathfrak{z}^{\perp})$ and $n=\dim(\mathfrak{z})$ we have  that $\{ X_1,\ldots, X_m \}$ is a basis of the horizontal distribution $\mathfrak{z}^{\perp}$. Then that  we have
\[
[X_i,X_j]= \sum_{k=1}^n a^{k}_{j,i} \frac{\partial}{\partial t_k}
\]
and setting $Z_k= \frac{\partial}{\partial t_k}$ for $k=1,\ldots,n$ we get that $Z_1,\ldots,Z_n$ is an orthonormal basis of  $\mathfrak{z}$. 
 The homogenous dimension $Q$  is given by $Q=m+2n$. We denote by $\nabla_{\mathbb{G}}$ the horizontal gradient 
$
\nabla_{\mathbb{G}}= (X_1,\ldots,X_m)
$
and by $\nabla$ the standard Euclidean gradient.  The sub-Laplacian operator is given by
\[
\Delta_{\mathbb{G}}=  \sum_{k=1}^{m} X_i^2= \divv_{\mathbb{G}}(\nabla_{\mathbb{G}} ),
\]
where $\divv_{\mathbb{G}}(\phi)= X_1(\phi_1)+\ldots+ X_m(\phi_m)$ for $\phi=\phi_1 X_1+\ldots+ \phi_m X_m \in \mathfrak{z}^{\perp}$. Is is well known (see \cite[Chapter 5]{BLU}) that the sub-Laplacian admits a unique fundamental solution $\hat{\Gamma} \in C^{\infty}(\rr^{m+n} \smallsetminus \{0\})$, $\hat{\Gamma} \in L_{\text{loc}}^1(\rr^{m+n})$, $\hat{\Gamma}(x,t) \to 0$ when  $\xi=(x,t)$  tends to infinity and such that 
\[
\int_{\rr^{m+n}} \hat{\Gamma}(x,t) \,  \Delta_{\mathbb{G}}   \varphi(x,t) \,  dx \ dt = -\varphi(0) \quad \forall \varphi \in C^{\infty}(\rr^{m+n}).
\]
\begin{definition}
We call Gauge norm on $\mathbb{G}$ a homogeneous symmetric norm $d$ smooth out of the origin and satisfying
\[
\Delta_{\mathbb{G}} (d(x,t)^{2-Q})=0 \quad \forall (x,t) \ne (0,0) .
\]
\end{definition}
Following \cite{BLU} a Gauge norm in $\mathbb{G}$ is given by 
\[
|\xi|_{\GG}=\left( | x |^4+ 16 |t|^2\right)^{\tfrac{1}{4}},
\]
where $\xi=(x,t)$.
Therefore there exists a positive constant $C_Q$ such that  
\[
\hat{\Gamma}(\xi)=C_Q \, |\xi|_{\GG}^{2-Q}= \tfrac{C_Q}{ \Big( |x|^4 + 
16 |t|^2 \Big)^{\frac{(2-Q)}{4}}}.
\]
Strictly speaking $| \cdot|_{\GG}$ and $\hat \Gamma$ are defined on the algebra $\mathfrak{g}$ and $d(\xi,\eta)=|\eta^{-1}\circ \xi|_{\GG}$ on the group $\mathbb{G}$. Indeed, for every couple of points $\xi=(x,t)$, $\eta=(y,\tau)$ in $\rr^{m+n}$ there exists and are unique coefficients  $v=(v_1,\ldots,v_m)$ and $z=(z_1,\ldots,z_n)$ such that
$$\xi = \exp\left(\sum_{i=1}^m v_i X_i + \sum_{k=1}^n z_k Z_k \right) (\eta).$$
 We call these  coefficients  $(v,z) = \text{Log}_{\eta}(\xi)$ and a straightforward computation shows that $ \text{Log}_{\eta}(\xi)= \eta^{-1} \circ \xi$. Finally we define the fundamental solution  $\Gamma(\xi, \eta)$ on $\mathbb{G}$ as $\hat \Gamma(\text{Log}_{\eta}(\xi))= \hat \Gamma(\eta^{-1} \circ \xi)$, that is given by 
\begin{equation}
\label{eq:fundsolHT}
\Gamma(\xi, \eta)=
C(Q)  \Big( \big| x-y \big|^4 + 
16 \sum_{k=1}^n \big(t_k - \tau_k - \frac{1}{2}  \escpr{A^{(k)}y,x} \big)^2 \Big)^{\frac{(2-Q)}{4}}.
\end{equation}

\begin{example}
\label{eq:nothormander}
Let us consider the $H$-type group given by $\rr^{5}$ with the following vector fields
\[
X_1=\frac{\partial }{\partial x_1} - \frac{x_2}{2} \frac{\partial }{\partial t_1}, \quad X_2=\frac{\partial }{\partial x_2} +\frac{x_1}{2} \frac{\partial }{\partial t_1}, \quad  X_3=\frac{\partial }{\partial x_3} +x_1 \frac{\partial }{\partial t_2},
\]
that generate the horizontal distribution $\mathfrak{z}^{\perp}$,
and  $Z_1=\tfrac{\partial }{\partial t_1}$, $ Z_2=\frac{\partial }{\partial t_2}$. When we consider $\Pi=\{x_1=0\}$ we obtain that $\mathfrak{z}^{\perp} \cap \Pi$, that is generated by $\tfrac{\partial }{\partial x_2}$ and $\tfrac{\partial }{\partial x_3}$,  does not satisfy the H\"{o}rmander condition. In particular this a Carnot group, different from $\hh^1$, that does not satisfy the structure condition $(1.5)$ in \cite{BCC19}.

\end{example}

\begin{proposition}
\label{prop:K1KplaneHT}
Let $\Omega=\{(x,t) \in \rr^{m+n} \ : \ x_1>0\} \subset \mathbb{G}$ and $\partial  \Omega =\{x_1=0\}= \Pi$.  Then the double layer potential
$ \mathcal{D}(g)(x)$ is given by 
\begin{equation}
\label{eq:DgPiHT}
\mathcal{D}(g)(\xi)= K_1(g)(\xi) + K(g)(\xi)
\end{equation}
for $\xi \in \Omega$, where $K_1$ and $K$ are operators with kernels 
respectively $k_1$ and $k$ defined as
\begin{equation}\label{k1H}k_1(\xi ,\hat \eta)=C_Q (Q-2) \dfrac{ |x-(0,\hat y)|^2 x_1  }
{  \Big( \big| x-(0,\hat y) \big|^4 + 
16 \sum_{k=1}^n \big(\tau_k - t_k - \frac{1}{2}  \escpr{ A^{(k)}x,(0,\hat y)} \big)^2 \Big)^{\frac{Q+2}{4}}}
\end{equation}
\begin{equation}\label{kH}k(\xi,\hat \eta)=C_Q (2-Q) \dfrac{ 4 \sum_{i=2}^m \sum_{k=1}^n  \big(\tau_k - t_k - \frac{1}{2}  \escpr{A^{(k)}x,(0,\hat y)} \big) a_{1,i}^k (y_i-x_i)}
{  \Big( \big| x-(0,\hat y) \big|^4 + 
16 \sum_{k=1}^n \big(\tau_k - t_k - \frac{1}{2}  \escpr{ A^{(k)}x,(0,\hat y)} \big)^2 \Big)^{\frac{Q+2}{4}}},
\end{equation}
where $\hat y=(y_2,\ldots, y_m)$ and $\hat{\eta}=(0,\hat y,\tau) \in \Pi$.
\end{proposition}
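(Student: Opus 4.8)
The plan is to repeat, almost verbatim, the argument of Proposition~\ref{prop:K1Kplane}: I would rewrite the integrand of the double layer potential on $\Pi$ as a single horizontal derivative of the fundamental solution in the second variable, compute that derivative from the explicit expression \eqref{eq:fundsolHT}, restrict it to $\{y_1=0\}$, and finally sort the resulting terms into the two kernels \eqref{k1H} and \eqref{kH}.

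First I would observe that, since $\Pi=\{x_1=0\}$, the Euclidean unit normal along $\partial\Omega$ is $\nu=\partial_{x_1}$, and that each horizontal field $X_j=\partial_{x_j}+\tfrac12\sum_{k=1}^n\bigl(\sum_i a_{j,i}^k x_i\bigr)\partial_{t_k}$, read as a vector field on $\rr^{m+n}$, has first component $\delta_{1j}$, because its drift lies entirely in the $\partial_{t_k}$ directions. Hence $\escpr{\nabla_{\GG}^{\eta}\Gamma(\xi,\eta),\nu(\eta)}=X_1^{\eta}\Gamma(\xi,\eta)$ for $\eta\in\Pi$, exactly as in the proof of Proposition~\ref{prop:K1Kplane}, so that
$$\mathcal{D}(g)(\xi)=\int_{\Pi} g(\hat\eta)\,X_1^{\eta}\Gamma\bigl(\xi,(0,\hat y,\tau)\bigr)\,d\hat\eta,$$
and it only remains to identify $X_1^{\eta}\Gamma(\xi,\cdot)|_{\Pi}$ with $k_1(\xi,\hat\eta)+k(\xi,\hat\eta)$.

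To do this I would differentiate \eqref{eq:fundsolHT} directly in $\eta=(y,\tau)$. Writing $\Gamma(\xi,\eta)=C_Q\,W^{(2-Q)/4}$ with $W=|x-y|^4+16\sum_{k=1}^n S_k^2$ and $S_k=t_k-\tau_k-\tfrac12\escpr{A^{(k)}y,x}$, one gets $X_1^{\eta}\Gamma=\tfrac{2-Q}{4}C_Q\,W^{-(Q+2)/4}X_1^{\eta}W$, and the operator $X_1^{\eta}=\partial_{y_1}+\tfrac12\sum_k\bigl(\sum_i a_{1,i}^k y_i\bigr)\partial_{\tau_k}$ acts on $W$ as follows. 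On $|x-y|^4$ only $\partial_{y_1}$ survives and, at $y_1=0$, produces $-4x_1|x-(0,\hat y)|^2$, which after multiplication by $\tfrac{2-Q}{4}C_QW^{-(Q+2)/4}$ is exactly $k_1$. On $16\sum_k S_k^2$ both $\partial_{y_1}$ (hitting $\escpr{A^{(k)}y,x}$) and the drift term contribute; using the relation between the structure constants $a_{j,i}^k$ and the entries of $A^{(k)}$, the skew-symmetry of each $A^{(k)}$ (which in particular kills the $(1,1)$-entries, so the $i=1$ summand drops out — this is why the sum in \eqref{kH} starts at $i=2$), and the identity $S_k|_{\Pi}=-\bigl(\tau_k-t_k-\tfrac12\escpr{A^{(k)}x,(0,\hat y)}\bigr)$, these two contributions combine, after the prefactor, into exactly $k(\xi,\hat\eta)$. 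Integrating against $g$ over $\Pi$ then gives \eqref{eq:DgPiHT}.

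Nothing genuinely new occurs beyond the $\hh^1$ case, since \eqref{eq:fundsolHT} has the same algebraic shape as \eqref{eq:fundsol}; the only delicate point — and the one I expect to be most error-prone — is the index bookkeeping with the matrices $A^{(k)}$: matching conventions between $a_{j,i}^k$ and the entries of $A^{(k)}$, and using skew-symmetry (and, if needed, the anticommutation relations \eqref{eq:hyA}) to merge the $\partial_{y_1}$-derivative of $\escpr{A^{(k)}y,x}$ with the drift of $X_1^{\eta}$ into a single sum of the form $\sum_{i\ge2}a_{1,i}^k(x_i-y_i)$.
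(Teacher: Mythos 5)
Your proposal is correct and is essentially the same computation as the paper's: both reduce the integrand of the double layer potential to $X_1^\eta\Gamma(\xi,\eta)$ restricted to $\Pi=\{y_1=0\}$, expand that derivative explicitly, and sort the two resulting terms into $k_1$ and $k$. The only procedural difference is that the paper first computes $X_1\hat\Gamma(v,z)$ on the Lie algebra and then transports it via left-invariance and the symmetry $\Gamma(\xi,\eta)=\Gamma(\eta,\xi)$, whereas you differentiate $\Gamma(\xi,\eta)=C_Q W^{(2-Q)/4}$ directly in $\eta$; these yield the same formula. Your bookkeeping is right: $\partial_{y_1}|x-y|^4|_{y_1=0}=-4|x-(0,\hat y)|^2x_1$ gives $k_1$ after the prefactor $\tfrac{2-Q}{4}C_QW^{-(Q+2)/4}$, and on the $16\sum_k S_k^2$ part the $\partial_{y_1}$-derivative of $\langle A^{(k)}y,x\rangle$ combines with the drift $\tfrac12\sum_i a^k_{1,i}y_i\partial_{\tau_k}$ to give $X_1^\eta S_k=-\tfrac12\sum_i a^k_{1,i}(y_i-x_i)$, the $i=1$ summand vanishing because $a^k_{1,1}=(A^{(k)})_{11}=0$ by skew-symmetry (this is what the paper means by ``noticing that $a^k_{1,i}=0$'', which is a small typo for $a^k_{1,1}=0$). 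Together with $S_k|_\Pi=-\bigl(\tau_k-t_k-\tfrac12\langle A^{(k)}x,(0,\hat y)\rangle\bigr)$, this reproduces $k$ exactly.
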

\begin{proof}
First of all we have 
\[
X_1 \hat \Gamma (v,z)= C_Q (2-Q) \dfrac{ |v|^2 v_1  + 4 \sum_{i=1}^m \sum_{k=1}^n  a_{1,i}^k v_i z_k}
{  \Big( | v |^4 + 
16 |z|^2 \Big)^{\frac{Q+2}{4}}}
\]

Then, by left invariance an explicit computation shows that the derivative \eqref{eq:fundsolHT} with respect to $X^{\xi}_1$ is given by
\begin{align*}
&X^{\xi}_{1}(\Gamma(\xi, \eta)) = (X_{v_1} \hat \Gamma)(\eta^{-1} \circ \xi) =\\
&=
C_Q (2-Q) \dfrac{ |x-y|^2 (x_1-y_1)  + 4 \sum_{i=1}^m \sum_{k=1}^n  \big(t_k - \tau_k - \frac{1}{2}  \escpr{A^{(k)}y,x} \big) a_{1,i}^k (x_i-y_i)}
{  \Big( \big| x-y \big|^4 + 
16 \sum_{k=1}^n \big(t_k - \tau_k - \frac{1}{2}  \escpr{A^{(k)}y,x} \big)^2 \Big)^{\frac{Q+2}{4}}}
\end{align*}
Since $\Gamma$ is symmetric we also have 
\begin{align*}
&X^{\eta}_{1} \Gamma(\xi,\eta) =\escpr{\nabla_{\GG}^{\eta} \Gamma(\xi,\eta), X_1^{\eta}}\\
&=
C_Q (2-Q) \dfrac{ |x-y|^2 (y_1-x_1)  + 4 \sum_{i=1}^m \sum_{k=1}^n  \big(\tau_k - t_k - \frac{1}{2}  \escpr{A^{(k)}x,y} \big) a_{1,i}^k (y_i-x_i)}
{  \Big( \big| x-y \big|^4 + 
16 \sum_{k=1}^n \big(t_k - \tau_k - \frac{1}{2}  \escpr{A^{(k)}y,x} \big)^2 \Big)^{\frac{Q+2}{4}}}\\
&=
C_Q (2-Q) \dfrac{ |x-y|^2 (y_1-x_1)  + 4 \sum_{i=1}^m \sum_{k=1}^n  \big(\tau_k - t_k - \frac{1}{2}  \escpr{A^{(k)}x,y} \big) a_{1,i}^k (y_i-x_i)}
{  \Big( \big| x-y \big|^4 + 
16 \sum_{k=1}^n \big(\tau_k - t_k - \frac{1}{2}  \escpr{ A^{(k)}x,y} \big)^2 \Big)^{\frac{Q+2}{4}}}\\
\end{align*}
Evaluating this derivative over the plane $\Pi=\{y_1=0\}$ for $x_1>0$ and notincing that $a_{1,i}^k=0$ we get
\begin{align*}
X^{\eta}_{1} \Gamma(\xi,\eta) &= C_Q (2-Q) \dfrac{ - |x- (0,\hat y)|^2 x_1  + 4 \sum_{i=2}^m \sum_{k=1}^n  \big(\tau_k - t_k - \frac{1}{2}  \escpr{A^{(k)}x,(0,\hat y)} \big) a_{1,i}^k (y_i-x_i)}
{  \Big( \big| x-(0,\hat y) \big|^4 + 
16 \sum_{k=1}^n \big(\tau_k - t_k - \frac{1}{2}  \escpr{ A^{(k)}x,(0,\hat y)} \big)^2 \Big)^{\frac{Q+2}{4}}}\\
&=k_1(\xi,\hat \eta)+k(\xi,\hat \eta)
\end{align*}
\end{proof}

\begin{remark}
\label{rk:int=1}
Notice that for each $r>0$ it holds 
\begin{equation}
\label{eq:intbound1HT}
\int_{\partial B_{r}(\xi)}  \escpr{\nabla_{\GG} \Gamma(\xi,\eta) , \nu(\eta)} d \sigma(\eta)=1.
\end{equation}
Indeed, by the mean value formula for each open subset $O\subset \GG$ such that $\xi \in O$, for each $r>0$ such that $B_r(\xi) \subset O$ and for each harmonic function $\psi \in \mathcal{H}(O)$ we have 
\[
\psi(\xi)= \int_{\partial B_{r}(\xi)} \psi(\eta) \escpr{\nabla_{\GG} \Gamma(\xi,\eta) , \nu(\eta)} d \sigma(\eta).
\] 
In particular if we consider $\psi \equiv1$ in $O$ we obtain \eqref{eq:intbound1HT}.
\end{remark}

Let $\Omega=\{x_1>0\} \subset \GG$ and $\partial  \Omega =\{x_1=0\}= \Pi$. Then the induced distance $ \hat d$ is given by 
\begin{equation}
 \hat d(\hat \xi,\hat \eta)=|(0,\hat \eta)^{-1} \circ (0,\hat \xi) |_{\GG}
\end{equation}
for each $\hat \xi =(0,\hat x,t)$ and $\hat \eta =(0,\hat y,\tau)$ in $\Pi$ and  the induced ball is given by 
\[
\hat B_r(\hat \xi)=\Big\{ \hat \eta \in \Pi \ : \  \hat d(\hat \xi,\hat \eta)<r\Big\}.
\]

\begin{lemma}\label{gaHT}
Let $\xi_0=(0,\hat x_0,t_0) \in \Pi$, $R>0$ and  $\hat B_R(\hat \xi_0)= \{\hat \eta \in \Pi \: \ \hat d (\hat \xi_0, \hat y)\leq R\} \subset \Pi$. Then the integral 
$$
\int_{ \hat B_R(\hat \xi_0)}  \escpr{\nabla_{\GG}^{\eta} \Gamma (\xi,(0, \hat \eta)) , X_1^{\eta}(\hat{\eta})} d \hat \eta
$$
is well defined if the first component $x_1$ of $\xi$ satisfies $x_1>0$ and tends to $1/2$ as $\xi \to \xi_0$.
\end{lemma}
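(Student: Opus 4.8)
The plan is to reproduce, in the $H$-type setting, the divergence-theorem argument already used for Lemma \ref{ga}, with the fundamental solution \eqref{eq:fundsolHT} in place of \eqref{eq:fundsol} and the mean value identity \eqref{eq:intbound1HT} in place of \eqref{eq:intbound1}. First I would fix an arbitrary sequence $\{\xi^n\}\subset\{x_1>0\}$ with $\xi^n\to\xi_0$ and radii $\eps_n>0$ so small that $B(\xi^n,\eps_n)\subset\{x_1>0\}$, and apply the divergence theorem to the smooth vector field $\nabla_{\GG}^{\eta}\Gamma(\xi^n,\eta)$ on the Lipschitz domain $\Omega_n^R=\{x_1>0\}\cap B_R(\xi_0)\smallsetminus B(\xi^n,\eps_n)$. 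Since the pole has been excised, $\Delta_{\GG}\Gamma(\xi^n,\cdot)=0$ on $\Omega_n^R$, so the flux through $\partial\Omega_n^R$ vanishes; splitting $\partial\Omega_n^R$ into the cap $\partial B_R(\xi_0)\cap\{x_1>0\}$, the flat part $\Pi\cap B_R(\xi_0)=\hat B_R(\hat\xi_0)$, and the inner sphere $\partial B(\xi^n,\eps_n)$, and evaluating the contribution of the inner sphere as $1$ by \eqref{eq:intbound1HT} with $\psi\equiv1$, one obtains, exactly as in Lemma \ref{ga},
\[
\int_{\hat B_R(\hat\xi_0)}\escpr{\nabla_{\GG}^{\eta}\Gamma(\xi^n,\eta),\nu(\eta)}\,d\sigma(\eta)=1-\int_{\partial B_R(\xi_0)\cap\{x_1>0\}}\escpr{\nabla_{\GG}^{\eta}\Gamma(\xi^n,\eta),\nu(\eta)}\,d\sigma(\eta).
\]

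Next I would observe that the left-hand side is precisely the integral in the statement: on $\Pi$ the horizontal normal is $X_1^{\eta}$ (because $\nabla_{\GG}\{x_1\}=(1,0,\dots,0)$), and by Proposition \ref{prop:K1KplaneHT} the restriction of $X_1^{\eta}\Gamma(\xi^n,\cdot)$ to $\Pi$ equals $k_1(\xi^n,\hat\eta)+k(\xi^n,\hat\eta)$. For $x_1>0$ this is a bounded function of $\hat\eta$ on $\hat B_R(\hat\xi_0)$, since $|x-(0,\hat y)|^2\geq x_1^2>0$ forces the denominators in \eqref{k1H}--\eqref{kH} to be bounded below; this disposes of the well-definedness claim. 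One cannot pass to the limit $n\to\infty$ directly inside this integral — the kernel $k_1(\xi^n,\cdot)+k(\xi^n,\cdot)$ becomes singular at $\hat\xi_0$ as $\xi^n\to\xi_0\in\Pi$ — so the value of the limit has to be read off from the right-hand side.

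On the right-hand side the only singularity of $\nabla_{\GG}^{\eta}\Gamma(\xi^n,\cdot)$ sits at $\xi^n$, which stays at positive distance from $\partial B_R(\xi_0)$; hence by dominated convergence the cap integral converges to $\int_{\partial B_R(\xi_0)\cap\{x_1>0\}}\escpr{\nabla_{\GG}^{\eta}\Gamma(\xi_0,\eta),\nu(\eta)}\,d\sigma(\eta)$, and the proof reduces to showing that this equals $\tfrac{1}{2}$. After a left translation by $\xi_0^{-1}$ — which belongs to $\Pi$, hence preserves $\Pi$ and each of $\{x_1>0\}$, $\{x_1<0\}$ — we may assume $\xi_0=0$. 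Now the map $S(x,t)=(-x,t)$ is a group automorphism of $\GG$ (in a step-two group $[-v,-w]=[v,w]$), it is orthogonal on the horizontal layer, hence an isometry of $\GG$ commuting with $\Delta_{\GG}$ (indeed $S_*X_j=-X_j$ for every $j$), it preserves the gauge ball $B_R(0)$ and fixes $0$, and it interchanges $\{x_1>0\}$ with $\{x_1<0\}$. It follows that the measure $\escpr{\nabla_{\GG}^{\eta}\Gamma(0,\eta),\nu(\eta)}\,d\sigma(\eta)$ on $\partial B_R(0)$ is $S$-invariant, so it gives equal mass to the two caps; since its total mass is $1$ by \eqref{eq:intbound1HT} and the equator $\partial B_R(0)\cap\Pi$ has vanishing surface measure, each cap carries mass $\tfrac{1}{2}$. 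This yields the limit $\tfrac{1}{2}$ for the integral in the statement along the arbitrary sequence $\xi^n\to\xi_0$, hence as $\xi\to\xi_0$.

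The only step that really uses the ambient structure is the last one, the evaluation of the cap flux as $\tfrac{1}{2}$; but, as just indicated, this is handled cleanly by the reflection $S(x,t)=(-x,t)$, so it is not a serious obstacle. The remaining ingredients — the divergence theorem on the Lipschitz domain $\Omega_n^R$, the mean value identity \eqref{eq:intbound1HT}, and the lower bound on the kernel denominators for $x_1>0$ — are routine, and the genuinely new input compared with Lemma \ref{ga} is simply the explicit form \eqref{eq:fundsolHT} of the fundamental solution, already exploited in Proposition \ref{prop:K1KplaneHT}.
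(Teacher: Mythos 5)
Your proof reproduces the paper's argument exactly: the same domain $\Omega_n^R$, the same application of the divergence theorem, the same three-way decomposition of the boundary with the inner-sphere flux evaluated to $1$ via \eqref{eq:intbound1HT}, and the same passage to the limit through the cap integral. The one genuine improvement is the final step, where the paper merely asserts that the cap carries half the mass ``since we only consider half of the integral equation''; you make this precise by left-translating to $\xi_0=0$ and invoking the group automorphism $S(x,t)=(-x,t)$, which is a Euclidean isometry satisfying $S_*X_j=-X_j$ (so it preserves both the gauge ball and the flux form $\escpr{\nabla_{\GG}^\eta\Gamma(0,\eta),\nu(\eta)}\,d\sigma$), thus supplying the symmetry argument the paper leaves implicit.
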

\begin{proof}
Let $\{\xi^n\}_{n \in \nn}$ be a sequence of points in $\Omega=\{x_1>0\}$ converging  to $\xi_0$ as $n \to +\infty$ and $\eps_n>0$ small enough such that $B(\xi^n,\eps_n) \subset \Omega$ for each $n \in \nn$. Then we consider the bounded domain
$$\Omega_{n}^R= \{x_1 >0\} \cap B_R(\xi_0) \smallsetminus B(\xi^n,\eps_n).$$
By the divergence theorem  for each $n \in \nn $ we have 
\begin{equation}
\label{eq:2ballboundHT}
\begin{aligned}
0&= \int_{\Omega_{n}^R}  \Delta_{\GG} \Gamma (\xi^n, \eta) d\eta= \int_{\partial \Omega_n^R} \escpr{\nabla_{\GG}^{\eta} \Gamma(\xi_n,\eta), \nu(\eta)} d\sigma(\eta)\\
&= \int_{\partial B_R (\xi_0) \cap \{x_1 >0\}} \escpr{\nabla_{\GG}^\eta \Gamma(\xi_n,\eta), \nu (\eta)} d\sigma(\eta)\\
& \quad + \int_{\Pi \cap B_R(\xi_0)} \escpr{\nabla_{\GG}^\eta \Gamma(\xi_n,\eta), \nu (\eta)} d\sigma(\eta)\\
&\quad   -\int_{\partial B(\xi^n,\eps_n)} \escpr{\nabla_{\GG}^\eta \Gamma(\xi_n,\eta), \nu(\eta)} d\sigma(\eta).
\end{aligned}
\end{equation}
For each $n \in \nn$ the ball $B(\xi^n,\eps_n)$ is contained in $\{x_1>0\}$ thus by Remark \ref{rk:int=1} we get 
\[
\int_{\partial B(\xi^n,\eps_n)} \escpr{\nabla_{\GG}^y \Gamma(\xi_n,\eta), \nu_h(\eta)} d\sigma(\eta)=1.
\]
Noticing that $\Pi \cap B_R(x_0)= \hat B_R( \hat x_0)$ and rearranging terms in  \eqref{eq:2ballboundHT} we get
\[
\int_{\hat B_R( \hat \xi_0)} \escpr{\nabla_{\GG}^{\eta} \Gamma(\xi_n,\eta), \nu (\eta)} d\sigma(\eta)= 1- \int_{\partial B_R (\xi_0) \cap \{x_1 >0\}} \escpr{\nabla_{\GG}^\eta \Gamma(\xi_n,\eta), \nu (\eta)} d\sigma(\eta).
\]
Letting $n \to + \infty$ the left hand side of the previous equality converges to 
\[
1- \int_{\partial B_R (\xi_0) \cap \{x_1 >0\}} \escpr{\nabla_{\GG}^\eta \Gamma(\xi_0,\eta), \nu (\eta)} d\sigma(\eta)=\dfrac{1}{2},
\]
since we  only consider half of the integral equation \eqref{eq:intbound1HT}.
\end{proof}

The operator $K_1$ is totally degenerate while restricted 
to $\Pi$, so that we can not restrict it to functions defined on $\Pi$; however we 
can compute the limit from the interior of the set.

\begin{proposition}\label{frompositiveH}
Let  $g$ be  a Lipschitz compact supported function in 
$\Pi$ and $\xi_0$ be a point in $\Pi$.  
For $\xi \in \GG \smallsetminus \Pi$ we consider
$$
K_1(g) (\xi) = \int_{\Pi} k_1(\xi,\eta) g(\eta) d\sigma(\eta).
$$
Then we  have
\begin{align*}
 &K_1(g) (\xi) \to \frac{1}{2} g(\xi_0) \quad \text{ as } \xi \to \xi_0^+,\\
 &K_1(g) (\xi) \to -\frac{1}{2} g(\xi_0) \quad  \text{ as } \xi \to \xi_0^-,
\end{align*}
so that $(K_1)^+=\tfrac{1}{2}\text{Id}$ while restricted to $\Pi$ and $(K_1)^-= - \frac{1}{2} \text{Id}$ while restricted to $\Pi$.
\end{proposition}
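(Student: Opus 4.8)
The plan is to follow the proof of Proposition~\ref{frompositive} line by line; the only steps requiring genuinely new input in the $H$-type setting are the pointwise decay of the kernel $k_1$ of \eqref{k1H} and the symmetry, on the boundary, of the kernel $k$ of \eqref{kH}. Fix $R>0$ with $\operatorname{supp}(g)\subset\hat B_R(\hat\xi_0)$, regard $g$ as defined on $\GG\smallsetminus\Pi$ via $g(\xi):=g(0,\hat x,t)$ for $\xi=(x_1,\hat x,t)$, and for $x_1>0$ decompose
\[
K_1(g)(\xi)=\int_{\hat B_R(\hat\xi_0)}k_1(\xi,\eta)\bigl(g(\eta)-g(\xi)\bigr)\,d\sigma(\eta)+g(\xi)\int_{\hat B_R(\hat\xi_0)}k_1(\xi,\eta)\,d\sigma(\eta).
\]

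For the first term I would record the uniform bound $|k_1(\xi,\eta)|\le C\,x_1^{1/2}\,d(\xi,\eta)^{-(Q-1/2)}$: by \eqref{k1H} the denominator equals $d(\xi,\eta)^{Q+2}$, while $|x-(0,\hat y)|^2\le d(\xi,\eta)^2$ and $0<x_1\le d(\xi,\eta)$ bound the numerator by $d(\xi,\eta)^{5/2}x_1^{1/2}$. Since the homogeneous dimension of $\Pi$ is $Q-1$ and $Q-\tfrac32<Q-1$, the quantity $x_1^{1/2}\int_{\hat B_R(\hat\xi_0)}d(\xi,\eta)^{-(Q-3/2)}\,d\sigma(\eta)$ stays bounded by $C\,x_1^{1/2}$ as $\xi\to\xi_0$; combined with the Lipschitz bound $|g(\eta)-g(\xi)|\le L\,d(\eta,\xi)$ this kills the first term in the limit.

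For the second term I would write $k_1=(k_1+k)-k$. By Proposition~\ref{prop:K1KplaneHT}, $k_1+k=\langle\nabla_{\GG}^{\eta}\Gamma(\xi,\eta),X_1^{\eta}\rangle$ on $\Pi$, so Lemma~\ref{gaHT} gives $\int_{\hat B_R(\hat\xi_0)}(k_1+k)(\xi,\eta)\,d\sigma(\eta)\to\tfrac12$ as $\xi\to\xi_0^{+}$; it therefore remains to show $\int_{\hat B_R(\hat\xi_0)}k(\xi,\eta)\,d\sigma(\eta)\to0$. Sending $x_1\to0$ in \eqref{kH} (using $a^{k}_{1,1}=0$, as in the proof of Proposition~\ref{prop:K1KplaneHT}), the kernel $k$ becomes a convolution kernel on the Carnot subgroup $\Pi=\{x_1=0\}$ which, in group–increment coordinates $w=\eta^{-1}\circ\xi_0=(w_h,w_z)\in\rr^{m-1}\times\rr^{n}$ centered at $\xi_0$, takes the form $\mathrm{const}\cdot\bigl(\sum_{i,k}(w_z)_k\,a^{k}_{1,i}\,(w_h)_i\bigr)\bigl(|w_h|^4+16|w_z|^2\bigr)^{-(Q+2)/4}$; this is odd under the reflection $w_z\mapsto-w_z$ of the central directions, whereas $\hat B_R(\hat\xi_0)$ is invariant under the corresponding measure–preserving involution of $\Pi$ (the Gauge norm sees only $|w_z|^2$), so the principal value $\int_{\hat B_R(\hat\xi_0)}k(\xi_0,\eta)\,d\sigma(\eta)$ vanishes. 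Passing to the limit inside as in Proposition~\ref{frompositive} then yields $g(\xi)\int k_1\,d\sigma\to\tfrac12 g(\xi_0)$, i.e.\ the $+$ statement. This cancellation is the $H$-type analogue of Lemma~\ref{lm:sypol2}.

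For $\xi\to\xi_0^{-}$ one repeats the argument with the half–space $\{x_1<0\}$ in place of $\{x_1>0\}$ in Lemma~\ref{gaHT}; since by \eqref{k1H} the kernel $k_1$ carries the sign of $x_1$, the same decomposition (now with $-k_1>0$ and $-x_1>0$) produces the limit $-\tfrac12 g(\xi_0)$, whence $(K_1)^{+}=\tfrac12\operatorname{Id}$ and $(K_1)^{-}=-\tfrac12\operatorname{Id}$ on $\Pi$. The step I expect to be the real obstacle is this boundary symmetry of $k$: away from $\Pi$ the vertical increments couple all variables through the bilinear terms $\langle A^{(k)}x,(0,\hat y)\rangle$ and $k$ has no such symmetry, and one must observe that restricting to $x_1=y_1=0$ collapses these increments to the intrinsic group increment of $\Pi$, with respect to which $k$ is a convolution kernel odd in the central directions. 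Once this is in hand, the remaining estimates are a mechanical transcription of the $\hh^1$ computations with $Q=4$ replaced by $Q=m+2n$, so I would not spell them out.
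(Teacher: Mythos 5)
Your proposal is correct and follows the paper's proof step by step: the same decomposition of $K_1(g)(\xi)$, the same pointwise bound $|k_1|\le C\,x_1^{1/2}d(\xi,\eta)^{-(Q-1/2)}$ used against the Lipschitz modulus of $g$, the same rewriting $k_1=(k_1+k)-k$ with Lemma~\ref{gaHT} for $\int(k_1+k)\to\tfrac12$, and the same sign flip for $x_1<0$. The only cosmetic difference is that you argue the vanishing of $\int k(\xi_0,\cdot)$ directly by oddness of $\hat k$ under $z\mapsto-z$ in exponential coordinates, while the paper invokes the slightly more general Lemma~\ref{lm:sypol2H} (whose proof rests on exactly that oddness, with $p\equiv1$), so the two are essentially the same argument.
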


\begin{proof}
Let $R>0$ big enough such that  $\text{supp}(g)  \subset \hat B_R (\hat \xi_0)$.
Let us assume that $\xi=(x_1, \hat x, t)$, $x_1>0$ and 
\begin{align*}
K_1(g) (\xi) =& \int_{\Pi} k_1(\xi,\eta) g(\eta) d\sigma(\eta) =\int_{\hat B_R (\hat \xi_0)} k_1(\xi,\eta) (g(\eta) - g(\xi))d\sigma(\eta)\\
& + g(\xi) \int_{\hat B_R (\hat \xi_0)} k_1(\xi,\eta) d\sigma(\eta).
\end{align*}
On one hand we have
\begin{align*}
\left|\int_{\hat B_R (\hat \xi_0)} k_1(\xi,\eta) (g(\eta) - g(\xi)) d\sigma(\eta) \right|&\leq L \int_{\hat B_R (\hat \xi_0)} k_1(\xi,\eta) d(\xi,\eta)d\sigma(\eta)\\
&\leq L \int_{\hat B_R (\hat \xi_0)} \sqrt{x_1} d(\eta, \xi)^{-Q+1 +\frac{1}{2}}d\sigma(\eta) \to 0,
\end{align*}
$\text{  as } \xi \to \xi_0$ and where $L$ is the Lipschitz constant of $g$.
On the other hand by Lemma \ref{gaHT} we have  
\begin{align*}
&g(\xi) \int_{\hat B_R (\hat \xi_0)} k_1(\xi,\eta) d\sigma(\eta) =g(\xi)\int_{\hat B_R (\hat \xi_0)} (k_1(\xi,\eta)+ k(\xi,\eta) ) d\sigma(\eta) +\\
& \, -g(\xi) \int_{\hat B_R (\hat \xi_0)}  k(\xi,\eta) d\sigma(\eta) 
\xrightarrow[\xi \to \xi_0^+] {} \frac{1}{2} g(\xi_0)- g(\xi_0) \int_{\hat B_R (\hat \xi_0)}  k(\xi_0,\eta) d\sigma(\eta)=\frac{1}{2} g(\xi_0)
\end{align*}
by symmetry of the kernel $k$ restricted to $\Pi$, see Lemma \ref{lm:sypol2H}.
Finally when $x_1<0$ the kernel  $k_1$ defined \eqref{k1H} has the same sign of $x_1$ , then  $-k_1$ and $-x_1$ are positive and by Lemma \ref{gaHT}  we have 
\begin{align*}
&-g(\xi) \int_{\hat B_R (\hat \xi_0)} -k_1(\xi,\eta) d\sigma(\eta) =-g(\xi)\int_{\hat B_R (\hat \xi_0)} (-k_1(\xi,\eta)+ k(\xi,\eta) ) d\sigma(\eta) +\\
& \quad -g(\xi) \int_{\hat B_R (\hat \xi_0)}  k(\xi,\eta) d\sigma(\eta) 
\xrightarrow[(-x_1,\hat x,t) \to \xi_0^+] {} -\frac{1}{2} g(\xi_0).
\qedhere
\end{align*}
\end{proof}

\begin{definition}
\label{def:kkH}
As $\xi \to \xi_0^{\pm}$ the kernel $k(\xi,\hat \eta)$ defined in \eqref{kH} converges to  the kernel
\begin{equation}
\label{eq:kkH}
k(\hat \xi, \hat \eta )=C_Q (2-Q) \dfrac{ 4 \sum_{i=2}^m \sum_{k=1}^n  \big(\tau_k - t_k - \frac{1}{2}  \escpr{\hat{A}^{(k)} \hat x,\hat y} \big) a_{1,i}^k (y_i-x_i)}
{  \Big( \big| \hat x- \hat y \big|^4 + 
16 \sum_{k=1}^n \big(\tau_k - t_k - \frac{1}{2}  \escpr{ \hat A^{(k)} \hat x,\hat y} \big)^2 \Big)^{\frac{Q+2}{4}}},
\end{equation}
where $\hat A^{(k)}=(a_{ij}^k)_{i,j=2,\ldots,m}$. Notice  that 
$k(\hat \xi, \hat \eta )= \hat k ((\hat v,z))$ where $(0,\hat v, z)=\text{Log}_{(0,\hat \xi)}((0,\hat \eta))$, $\hat v=(v_2,\ldots,v_m)$ and 
\begin{equation}
\label{eq:kkHO}
\hat k((\hat v, z))= C_Q (2-Q) \dfrac{ 4 \sum_{i=2}^m \sum_{k=1}^n  a_{1,i}^k v_i z_k}
{  \Big( | \hat v |^4 + 
16 |z|^2 \Big)^{\frac{Q+2}{4}} }
\end{equation}
Thus, if $g$ is a continuous compactly supported function in $\Pi$ the operator $K(g)$ converges to
\[
\int_{\Pi} k(\hat \xi, \hat \eta ) g(\hat \eta) d \sigma(\eta),
\]
that with an abuse of notation we also denoted by $K(g)$. 
\end{definition}

Hence the analogous of \cite[Theorem 4.4]{MR3600064} in this setting is the following
\begin{theorem}
Let $g$ be a Lipschitz compacty supported function in 
$\Pi$ and $\xi_0$ be a point in $\Pi$. Let $\mathcal{D}(g)$ be the double layer potential  defined in \eqref{eq:DgPiHT},  then the limits of $\mathcal{D}(g)(\xi)$ when $\xi$ tends to $\xi_0^+$ for $\xi \in \{x_1>0\}$ and when $\xi$ tends to $\xi_0^-$ for $\xi \in \{x_1<0\}$ exist. Moreover the limits verify  the following relations
\begin{align*}
 &\lim_{ \xi \to \xi_0^+} \mathcal{D}(g)(\xi)= \tfrac{1}{2} g(\xi_0)+ Kf(\xi_0) & \text{if} \quad \xi \in  \{x_1>0\}\\
& \lim_{ \xi \to \xi_0^-} \mathcal{D}(g)(\xi)= -\tfrac{1}{2} g(\xi_0)+ Kf(\xi_0) & \text{if} \quad \xi \in  \{x_1<0\},
\end{align*}
where $K$ is the operator with convolution kernel $k$ defined in \eqref{eq:kkH}.
\end{theorem}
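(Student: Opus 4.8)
The plan is to decompose the double layer potential via Proposition~\ref{prop:K1KplaneHT} and treat its two pieces separately: the totally degenerate piece $K_1$ by the jump computation already available, and the singular piece $K$ by the fact that, as one approaches $\Pi$, its kernel converges to a genuine convolution kernel on $\Pi$ of the correct homogeneity. Concretely, by Proposition~\ref{prop:K1KplaneHT} we have $\mathcal{D}(g)(\xi)=K_1(g)(\xi)+K(g)(\xi)$ for every $\xi\in\GG\smallsetminus\Pi$, with the kernels $k_1$ and $k$ of \eqref{k1H} and \eqref{kH}.

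For the $K_1$--part, Proposition~\ref{frompositiveH} gives directly that $K_1(g)(\xi)\to\tfrac12 g(\xi_0)$ as $\xi\to\xi_0^+$ and $K_1(g)(\xi)\to-\tfrac12 g(\xi_0)$ as $\xi\to\xi_0^-$. This is where Lemma~\ref{gaHT} and the mean value identity \eqref{eq:intbound1HT} do the work: the total mass of $\escpr{\nabla_{\GG}^{\eta}\Gamma(\xi,\cdot),X_1^{\eta}}$ over a half--ball tends to $1/2$, and the $K$--contribution to that mass is annihilated by the symmetry of $k$ restricted to $\Pi$ (Lemma~\ref{lm:sypol2H}, the $H$--type analogue of Lemma~\ref{lm:sypol2}).

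For the $K$--part I would make precise the convergence asserted in Definition~\ref{def:kkH}. Fixing $R$ large enough that the support of $g$ is contained in $\hat B_R(\hat\xi_0)$, one writes, for $\xi\notin\Pi$,
\[
K(g)(\xi)=\int_{\hat B_R(\hat\xi_0)}k(\xi,\hat\eta)\,\big(g(\hat\eta)-g(\xi_0)\big)\,d\sigma(\eta)+g(\xi_0)\int_{\hat B_R(\hat\xi_0)}k(\xi,\hat\eta)\,d\sigma(\eta),
\]
together with the same identity for the limiting convolution kernel $k(\hat\xi_0,\hat\eta)=\hat k\big(\text{Log}_{(0,\hat\xi_0)}(0,\hat\eta)\big)$ of \eqref{eq:kkHO}, which is homogeneous of degree $-(Q-1)$, i.e. minus the homogeneous dimension of $\Pi$. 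In the first integral the Lipschitz bound $|g(\hat\eta)-g(\xi_0)|\le L\,d(\xi_0,\eta)$ together with the size estimate $|k(\xi,\hat\eta)|\lesssim d(\xi,\eta)^{-(Q-1)}$ produces an integrand bounded, uniformly for $\xi$ near $\xi_0$, by an integrable multiple of $d(\xi_0,\eta)^{-(Q-2)}$; since $k(\xi,\hat\eta)\to k(\hat\xi_0,\hat\eta)$ pointwise for $\hat\eta\ne\hat\xi_0$, dominated convergence sends it to its analogue with $k(\hat\xi_0,\cdot)$. The second integral converges to $g(\xi_0)\int_{\hat B_R(\hat\xi_0)}k(\hat\xi_0,\hat\eta)\,d\sigma(\eta)$, exactly the corresponding term for the convolution operator with kernel \eqref{eq:kkH}. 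Hence $K(g)(\xi)\to K(g)(\xi_0)$ from both sides, and adding this to the $K_1$--jump gives $\mathcal{D}(g)(\xi)\to(\tfrac12 I+K)(g)(\xi_0)$ from $\{x_1>0\}$ and $\mathcal{D}(g)(\xi)\to(-\tfrac12 I+K)(g)(\xi_0)$ from $\{x_1<0\}$, which is the assertion.

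The \emph{main obstacle} is the uniform control, in the previous step, of the near--diagonal part of $K(g)(\xi)$ as $\xi\to\xi_0$. This rests on two facts special to the present setting: the cancellation of the convolution kernel restricted to $\Pi$, so that subtracting $g(\xi_0)$ is legitimate and the principal value makes sense, and a coarea/size estimate of the type of Lemma~\ref{lm:coarea} adapted to $\Pi\subset\GG$, which is precisely where the explicit form \eqref{eq:fundsolHT} of the fundamental solution and the orthogonality relations \eqref{eq:hyA} of the matrices $A^{(k)}$ are used. For $\hh^1$ these ingredients were packaged into Proposition~\ref{frompositive} and Definition~\ref{def:kk}; once the kernel \eqref{eq:kkHO} and its homogeneity degree $-(Q-1)$ are in hand, the argument parallels that case verbatim.
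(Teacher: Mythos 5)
Your argument follows the paper's proof essentially verbatim: the paper proves this theorem by simply citing Proposition~\ref{frompositiveH} for the $K_1$--piece and Definition~\ref{def:kkH} for the convergence of the $K$--piece, which is exactly the decomposition and the two ingredients you use. You expand on Definition~\ref{def:kkH} by writing out the split into $\int k(\xi,\hat\eta)(g(\hat\eta)-g(\xi_0))$ plus $g(\xi_0)\int k(\xi,\hat\eta)$ and invoking dominated convergence; the paper leaves that step at the level of an assertion, so the extra detail is welcome.

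One caveat: the dominated convergence claim for the first piece is not quite airtight as stated (the domination $|k(\xi,\hat\eta)|\,d(\xi_0,\eta)\lesssim d(\xi_0,\eta)^{-(Q-2)}$ only holds once $d(\xi_0,\eta)\gtrsim d(\xi_0,\xi)$; near the diagonal you need a separate small-ball estimate using $d(\xi,(0,\hat\eta))\ge |x_1|$), and the convergence of the second term $\int_{\hat B_R}k(\xi,\hat\eta)\,d\sigma \to 0$ is precisely the borderline point where pointwise convergence of a $d^{-(Q-1)}$ kernel on a space of homogeneous dimension $Q-1$ does \emph{not} automatically pass to the limit; you would need to actually exploit the approximate odd symmetry of $k(\xi,\cdot)$ (not just of the limit kernel $\hat k$) to kill the near--diagonal mass. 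That said, the paper's own chain of reasoning (Lemma~\ref{gaHT} $\to$ Proposition~\ref{frompositiveH} $\to$ this theorem) rests on the same unexamined step, so this is not a gap you introduced; it is inherited from the source.
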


\begin{proof}
By Propositions \ref{frompositiveH} and Definition \ref{def:kkH} we obtain 
$$\mathcal{D}(g)(\xi) \to (\frac{1}{2} I +K)(g)(\xi_0) $$  
in the limit from positive values of $x_1$, while
$$\mathcal{D}(g)(\xi) \to  (- \frac{1}{2} I +K)(g)(\xi_0) $$  
in the limit from negative values of $x_1$.
\end{proof}

\subsection{Invertibility of the double layer potential  on the intrinsic plane}
\label{sc:invertH}
\subsection{The $C^{2,\alpha}$ estimates of $K$}
\label{sc:c2alphaestimeteH}
Let $r\in \rr$ and $\Pi_r=\{\xi=(r, \hat{x},t)\}$. Let  $\hat \xi=(\hat x, t), \hat \eta=(\hat y, \tau) \in \Pi_r$ and $\hat{v}=(v_2,\ldots,v_m)$ and $z=(z_1,\ldots,z_n)$ such that  $(0,\hat v,z) = \text{Log}_{(0,\hat \eta)}((0,\hat \xi))$. We set $\hat X_j= X_j \big|_{x_1=0}$. 
On $\Pi_r$ we consider the distance 
\begin{equation}
\label{eq:distancetildeH}
\tilde{d}(\hat \xi,\hat \eta)= |(0,-\eta)\circ(0,\hat \xi) |_{\GG}=\left( | \hat v |^4+ 16 |z|^2\right)^{\tfrac{1}{4}}
\end{equation}
instead of $\hat d$ on $\Pi_r$. Notice that $\tilde d$ coincides with $\hat d$ on $\Pi_r$ if and only if $r=0$.

\begin{definition}[Classical H\"older classes  $C^{2,\alpha}$]
Let $\hat{X}^2 g (\hxi)$ be the \emph{horizontal tangential Hessian} given by
\[
\hat{X}^2 g(\hxi)_{i,j}=\frac{\hat X_i \hat X_j g(\hxi )+\hat X_j \hat X_i g(\hxi)}{2}, \qquad i,j=2,\ldots,m.
\]
Let $r \in \rr$, we say that a function $g$ defined on the boundary $\Pi_r=\{x=(r, \hx, t)\}$ is of class $C^{2, \alpha}(\Pi_r)$  
if and only if $\hat X_{i}  \hat X_j g$ for $i=2,\ldots,m$ and $Z_k g$ for $k=1,\ldots,n$ are continuous functions and  there exists $C>0$ such that
\[
|\hat{X}^2 g(\he)_{i,j}  - \hat{X}^2 g(\hxi)_{i,j} |  \le C \tilde{d}(\hat \eta,\hat \xi)^{\alpha}
\]
for $i,j=2,\ldots,m$ and 
\[
|Z_k g(\hat \eta) - Z_k g(\hat \xi)|  \le C \tilde{d}(\hat \xi,\hat \eta)^{\alpha},
\]
for $k=1,\ldots,n$ and for each $\hat \xi, \hat \eta$ in $ \Pi_r$.
In addition, we set 
$$\|g\|_{2, \alpha} =\|g\|_{2}+ \max_{i,j=2,\ldots,m}[(\hat{X}^2 g)_{i,j}]_{\alpha}+\max_{k=1,\ldots,n}[Z_k g]_{\alpha} $$
where 
$$[Z_k g]_{\alpha}=\sup_{\hat \xi, \hat \eta \in \Pi_r} \frac{|Z_k g(\hat \eta) - Z_k g(\hat \xi)| }{\tilde d(\hat \xi,\hat \eta)^{\alpha}},$$
$$[(\hat{X}^2 g)_{i,j}]_{\alpha}=\sup_{\hat \xi, \hat \eta \in \Pi_r} \frac{|\hat{X}^2 g(\he)_{i,j}  - \hat{X}^2 g(\hxi)_{i,j}| }{\tilde d(\hat \xi,\hat \eta)^{\alpha}} $$
and
$$\|g\|_{2} = \|g\|_{\infty}+ \max_{i=2,\ldots,m} \sup_{\hat \xi \in \Pi_r}| \hat X_i g(\hat \xi)| + \max_{i,j=2,\ldots,m} \sup_{\hat \xi \in \Pi_r} |\hat X^2 g(\hat \xi)_{i,j}|+  \max_{k=1,\ldots,n} \sup_{\hat \xi \in \Pi_r}| Z_k g(\hat \xi)|.$$
\end{definition}

\begin{proposition}
\label{prop:Gamma=C2H}
A function $f$ belongs to $C^{2,\alpha}(\Pi_0)$ if and only if $f$ belongs to $\Gamma^{2,\alpha}(\Pi_0)$, namely for each $\hat \xi  \in \Pi_0$, $\rho>0$ there exists a polynomial $P_{\hat \xi}(\hat  \eta)=a_\hxi + b_\hxi \cdot \hat v+ \hat v^T C_\hxi \hat v+ d_\hxi  \cdot z$ with $(0,\hat v,z) = \text{Log}_{(0,\hat \eta)}((0,\hat \xi))$ and $C>0$ such that 
\begin{equation}
\label{eq:CTEH}
|f(\he)-P_{\hxi}(\he)|<C\rho^{2+\alpha}
\end{equation}
for each $\he \in B_{\rho}(\hxi)$ (see Definition \ref{def:CKalpha}).
\end{proposition}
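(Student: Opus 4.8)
The plan is to follow the proof of Proposition~\ref{Pr:C=Gamma} step by step, the only genuinely new point being that $\Pi_0$ is itself a step-$\le 2$ homogeneous group: it carries the left-invariant horizontal fields $\hat X_2,\dots,\hat X_m$ (the restrictions of $X_2,\dots,X_m$, which are tangent to $\{x_1=0\}$), the central fields $Z_1,\dots,Z_n$, the dilations $\hat\delta_\rho(\hat x,t)=(\rho\hat x,\rho^2 t)$, and, since $r=0$, the gauge distance $\tilde d=\hat d$, for which the triangle inequality holds. In contrast to $\hh^1$, the $m-1$ horizontal generators on $\Pi_0$ need not commute; what keeps the argument essentially one-dimensional is the centrality of $\mathfrak z$, so that each $\hat X_i$ commutes with all the $Z_k$.

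\emph{From $C^{2,\alpha}$ to $\Gamma^{2,\alpha}$.} Given $\hxi\in\Pi_0$ I take $P_{\hxi}$ to be the intrinsic second order Taylor polynomial of $f$ at $\hxi$: constant term $f(\hxi)$, horizontal linear coefficients $\hat X_i f(\hxi)$, quadratic part $\tfrac12\sum_{i,j}\hat X^2 f(\hxi)_{i,j}\,v_iv_j$, vertical coefficients $Z_k f(\hxi)$, where $(0,\hv,z)=\text{Log}_{(0,\he)}((0,\hxi))$. By centrality $\exp(-\sum_i v_iX_i-\sum_k z_kZ_k)=\exp(-\sum_i v_iX_i)\exp(-\sum_k z_kZ_k)$, so $\he$ is reached from $\hxi$ by first flowing along the single horizontal $1$-parameter subgroup generated by $Y=\sum_i v_iX_i$, up to an intermediate point $\hat\zeta$ with $\tilde d(\hxi,\hat\zeta)=|\hv|\le\tilde d(\hxi,\he)$, and then along the vertical directions, whose total $\tilde d$-length is $\lesssim|z|^{1/2}\le\tilde d(\hxi,\he)$. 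Expanding $f$ along the vertical flow by the fundamental theorem of calculus gives $f(\he)=f(\hat\zeta)-\sum_k z_k Z_k f(\hxi)+R_1$ with $|R_1|\lesssim|z|\,[Zf]_\alpha\,\tilde d(\hxi,\he)^\alpha\lesssim\tilde d(\hxi,\he)^{2+\alpha}$ (using $|z|\lesssim\tilde d(\hxi,\he)^2$ and that all points of the vertical segment stay within $\tilde d$-distance $\lesssim\tilde d(\hxi,\he)$ of $\hxi$). Along the horizontal flow Taylor's formula with Lagrange remainder gives $f(\hat\zeta)=f(\hxi)-Yf(\hxi)+\tfrac12 Y^2 f(\hxi')$ with $\hxi'$ on the segment; the key point is $Y^2 f=\sum_{i,j}v_iv_j\,\hat X_i\hat X_j f=\sum_{i,j}v_iv_j\,\hat X^2 f_{i,j}$, since the antisymmetric part of $\hat X_i\hat X_j f$ is annihilated by the symmetric tensor $v_iv_j$ — this is precisely why the non-commutativity of the $\hat X_i$ causes no harm. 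Replacing $\hxi'$ by $\hxi$ costs $\lesssim|\hv|^2[\hat X^2 f]_\alpha|\hv|^\alpha\lesssim\tilde d(\hxi,\he)^{2+\alpha}$, and summing the two expansions yields $|f(\he)-P_{\hxi}(\he)|\le C\tilde d(\hxi,\he)^{2+\alpha}<C\rho^{2+\alpha}$ on $B_\rho(\hxi)$.

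\emph{From $\Gamma^{2,\alpha}$ to $C^{2,\alpha}$.} This follows the rescaling scheme of Proposition~\ref{Pr:C=Gamma}, with $u_\rho(\hxi)=u(\hat\delta_\rho\hxi)/\rho^2$. Given two points at $\tilde d$-distance $\rho$, after a left translation (and a midpoint lemma analogous to Remark~\ref{rk:midpoint}) a common ball $B_{c\rho}$ sits inside $B_{C\rho}$ about each of them; the defining inequality \eqref{eq:CTEH} then forces the $\hat\delta_\rho$-rescaled polynomials $\widetilde P_{\hxi},\widetilde P_{\he}$ to differ by $\lesssim\rho^\alpha$ in $L^\infty(B_1)$. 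A multidimensional version of Lemma~\ref{lm:abcd} — bounding the coefficients $a,b_i,C_{i,j},d_k$ of $a+b\cdot\hv+\hv^T C\hv+d\cdot z$ by the supremum over the unit gauge ball of $\Pi_0$, testing at points $\hv=\pm t e_i,\,z=0$ and $\hv=0,\,z=\pm t e_k$ — gives, after undoing the $\hat\delta_\rho$-scaling, $|b_{\hxi}-b_{\he}|\lesssim\rho^{1+\alpha}$, $|C_{\hxi}-C_{\he}|\lesssim\rho^\alpha$, $|d_{\hxi}-d_{\he}|\lesssim\rho^\alpha$. Restricting $P_{\hxi}$ to the $1$-parameter subgroups through $\hxi$ in the directions $X_i$ (exponential coordinates $\hv=\pm s e_i,\,z=0$) and $Z_k$ (coordinates $\hv=0,\,z=\pm s e_k$) identifies $b_{\hxi,i}$ with $\pm\hat X_i f(\hxi)$ and $d_{\hxi,k}$ with $\pm Z_k f(\hxi)$, so $\hat X_i f$ and $Z_k f$ exist everywhere and $Z_k f\in C^{\alpha}(\Pi_0)$; the second difference trick of Proposition~\ref{Pr:C=Gamma} applied along $X_i$, and along $\hv=s e_i+t e_j$ (whose contribution is symmetric in $i,j$), identifies $2C_{\hxi,i,j}$ with $\hat X^2 f(\hxi)_{i,j}$, so the symmetrized horizontal Hessian exists and is $\alpha$-Hölder with respect to $\tilde d$. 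Hence $f\in C^{2,\alpha}(\Pi_0)$.

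\emph{Main obstacle.} The one place that is not a routine adaptation of the $\hh^1$ computations is the treatment of the several non-commuting horizontal tangential fields: one must arrange that the quadratic part of the Taylor polynomial is exactly the symmetrized Hessian $\hat X^2 f$, and that the same symmetric object is what the rescaling and second-difference arguments recover. This is handled by the two observations that (i) the horizontal–vertical factorization of a displacement is exact because $\mathfrak z$ is central, and (ii) along any single horizontal $1$-parameter subgroup the operator $Y^2$ produces only the symmetric combination of second derivatives. The remaining ingredients — the midpoint/covering lemma, the coefficient bound lemma, and the coarea-type volume identities for balls of $\Pi_0$ — are straightforward analogues of Remark~\ref{rk:midpoint}, Lemma~\ref{lm:abcd} and Lemma~\ref{lm:coarea}.
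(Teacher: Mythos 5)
Your proposal follows the same two-step architecture as the paper: for the inclusion $C^{2,\alpha}\subset\Gamma^{2,\alpha}$, a Lagrange--Taylor expansion along a single horizontal one-parameter subgroup (where only the symmetrized Hessian $\hat X^2 f$ can appear, since $v_iv_j$ is symmetric) followed by a vertical Lagrange expansion, made possible by the centrality of $\mathfrak z$; for the converse, rescaling via $\hat\delta_\rho$, the midpoint lemma, an $L^\infty(B_1)$ comparison of the rescaled polynomials, and the coefficient bound of Lemma~\ref{lm:abCdH}. The one step where you genuinely depart from the paper is the recovery of the mixed horizontal second derivatives. The paper works in exponential coordinates of the \emph{second} kind, applies the Baker--Campbell--Hausdorff formula to $\exp(s\hat X_i)\exp(h\hat X_j)(\hxi)$, reads off $\hat X_j\hat X_i f(\hxi)=2(C_{\hxi})_{ij}-\tfrac12\sum_k a_{ij}^kZ_kf(\hxi)$ via iterated limits, and then symmetrizes. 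You instead stay in \emph{first}-kind coordinates $\hv=se_i+te_j$, $z=0$, where the log of the displacement is purely horizontal so no explicit commutator correction enters the polynomial; the price is that the identification of $\lim_{s,t\to 0}\tfrac{1}{st}\Delta_{s,t}f$ with $\hat X^2 f(\hxi)_{ij}$ relies on the classical fact (differential of $\exp$, or BCH in disguise) that mixed second derivatives of $f\circ\exp$ along first-kind coordinates produce exactly the symmetrized Hessian. You assert this in a parenthetical (``whose contribution is symmetric in $i,j$'') rather than compute it; since this is precisely the non-routine point that the paper takes care to make explicit, it deserves a line of justification. Both routes are correct and of comparable length; the paper's second-kind route mirrors the one-dimensional second-difference trick of Proposition~\ref{Pr:C=Gamma} more literally and makes the commutator correction visible, whereas yours hides it but keeps the calculus entirely within one-parameter subgroups. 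One small gap worth closing: the target class $C^{2,\alpha}(\Pi_0)$ requires continuity of the non-symmetrized $\hat X_i\hat X_j f$; you only exhibit $\hat X^2 f_{ij}$ and $Z_kf$ in $C^\alpha$, so you should add the one-line remark that $\hat X_i\hat X_j f=\hat X^2 f_{ij}+\tfrac12\sum_k a_{ij}^kZ_kf$ is then continuous as well (the paper gets $\hat X_j\hat X_i f$ directly from the BCH computation, so this issue does not arise there).
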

\begin{proof}
Assume that $f \in C^{2,\alpha}(\Pi_0)$. Let 
\begin{align*}
P_{\hxi}  (\he)&= f(\hxi)+\hat X f(\hxi) \cdot (\hy-\hx)+\tfrac{1}{2}   (\hy-\hx)^T \, \hat{X}^2 f(\hxi)\, \cdot (\hy-\hx) \\
& \quad +\sum_{k=1}^n Z_k f(\hxi) (\tau_k-t_k-\tfrac{1}{2} \escpr{\hat A^{(k)} \hat x, \hat y}),
\end{align*}
where  $\hat X=(\hat X_2,\ldots,\hat X_m)$, $Z=(Z_1,\ldots,Z_n)$  and 
\[
\hat{X}^2 f(\hxi)_{i,j}=\frac{\hat X_i \hat X_j f(\hxi )+\hat X_j \hat X_i f(\hxi)}{2}, \qquad i,j=2,\ldots,m
\]
By the Taylor's formula with Lagrange remainder for the function $s\to f(\gamma(s))$ with $\dot{\gamma}(s)=\sum_{i=2}^m v_i \hat X_i $, with $\hat v=(\hy-\hx)$ $s \in [0,1]$, $\gamma(0)=(\hx,t)$, $\gamma(1)=(\hy, \bar{t})$ we get 
\[
f(\hy,\bar{t})=f(\hx,t)+ \hat X f(\hxi)\cdot(\hy-\hx) + \frac{1}{2} (\hy-\hx)^T\hat X^2 f(\hat \mu) \cdot (\hy-\hx)
\]
where $ \hat \mu=\gamma(\theta)$ for $\theta \in (0,1)$ and $\bar{t}_k=t_k+\frac{1}{2} \escpr{\hat A^{(k)} \hx, \hy}$.
Moreover, by the Lagrange mean value theorem for the function $s\to f(\hy, \bar{t}+s(\tau-\bar{t}))$ with $s \in [0,1]$ we get 
\[
f(\hy, \tau)=f(\hy,\bar{t})+ Z f( \hat \zeta) \cdot (\tau-\bar{t})
\]
where $ \hat \zeta=(\hy, \bar{t}+ \theta (\tau-\bar{t}))$ for $\theta \in (0,1)$. 
 Then we get 
\begin{align*}
f(\hy, \tau)&=f(\hx,t)+  \hat X f(\hxi)\cdot(\hy-\hx) + \tfrac{1}{2} (\hy-\hx)^T\hat X^2 f(\mu) \cdot (\hy-\hx)\\
& \quad+  \sum_{k=1}^n Z_k f( \hat \zeta)  (\tau_k-t_k-\tfrac{1}{2} \escpr{\hat A^{(k)}\hat x, \hat y})\\
&=P_{\hxi}  (\he)+\tfrac{1}{2}(\hy-\hx)^T [\hat X^2 f(\mu)-\hat X^2(\hxi)](\hy-\hx) \\
& \quad +  \sum_{k=1}^n [Z_k f(\hat \zeta)-Z_k f(\hxi)](\tau_k-t_k-\tfrac{1}{2} \escpr{\hat A^{(k)}\hat x, \hat y}).
\end{align*}
Therefore 
\begin{align*}
|f(\he)- P_{\hxi}  (\he)|&\le \sup_{i,j=2,\ldots,m } |\hat X^2 f(\hat \mu)_{i,j}- \hat X^2 f(\hxi)_{i,j}| |\hat v|^2+  \sup_{k=1,\ldots,n}|Z_k f(\hat \zeta)-Z_k f(\hxi)| \, |z|\\
&\le C \tilde{d}(\hat \mu,\hxi)^{\alpha}  \tilde{d}(\he,\hxi)^2+  C\tilde{d}(\xi,\hat \zeta)^{\alpha} \tilde{d}(\he,\hxi)^2  \le \tilde{C} \tilde{d}(\he, \hxi)^{2+\alpha}.
\end{align*}
Now, for any fixed $\hxi \in \Pi_0$ and $\rho > 0$, taking $\he \in   B_{\rho} (\hxi)$, clearly since $ \tilde{d}(\hxi, \he)^{2+\alpha} < \rho^{2+\alpha}$ we get 
\[
|f(\he)- P_{\hxi}  (\he)|< C \rho^{2+\alpha}.
\]
For the reverse implication we set 
\[
u_\rho(\hxi)= \frac{u(\delta_{\rho} (\hxi))}{\rho^2},
\]
where $\delta_\rho(\hxi)=(\rho \hx ,\rho^2 t)$.
Let  $\hxi$, $\he$ two points at distance $\rho$ apart, by Remark \ref{rk:midpointH} there exists $\hat \zeta$ such that  $\tilde{d}(\hxi,\hat \zeta),\tilde{d}(\he,\hat \zeta) < \frac{\sqrt{3}}{2} \rho $.
Then after a translation by the group low  of $-\hat \zeta$, we have $B_{\rho/2}=B_{\rho/2} (0) \subset B_{\sqrt{3}\rho}(\hxi), B_{\sqrt{3}\rho} (\he) $. Let  
\begin{align*}
&\|P_{\hxi, \rho/2}- P_{\he, \rho/2} \|_{L^{\infty}(B_1)}\\
&\le \|f_{\rho/2}-P_{\hxi, \rho/2}\|_{L^{\infty}(B_1)}+ \|f_{\rho/2}- P_{\he, \rho/2} \|_{L^{\infty}(B_1)}\\
&=\frac{4}{\rho^2} \sup_{(\hat v,z) \in B_{\rho/2}} |f(\hat v,z)- P_{\hxi}(\hat v,z)|+ \frac{4}{\rho^2} \sup_{(\hat v,z) \in B_{\rho/2}} |f(\hat v,z)- P_{\he}(\hat v,z)|\\
&\le \frac{4}{\rho^2} \sup_{(\hat v,z) \in B_{\sqrt{3}\rho}(\hxi)} |f(\hat v,z)- P_{\hxi}(\hat v,z)|+ \frac{4}{\rho^2} \sup_{(\hat v,z) \in B_{\sqrt{3} \rho}(\he)} |f(\hat v,z)- P_{\he}(\hat v,z)|\\
&\le 8 (3)^{1+\alpha/2} C \rho^{\alpha}.
\end{align*}
Notice that 
\[
(P_{\hxi, \rho/2} - P_{\he, \rho/2}) (\hat v,z)=\dfrac{4}{\rho^2}[ a_\hxi - a_\he+ (b_\hxi- b_\he) \cdot \rho \hat v+ \rho^2 \hat v^T (C_\hxi- C_\he) \hat v+(d_\hxi-d_\he)  \cdot \rho z].
\]
Then by Lemma \ref{lm:abCdH} we get 
\begin{equation}
\label{eq:cdestH}
\begin{aligned}
|a_\hxi-a_\he| \le 2 (3)^{1+\alpha/2} M \rho^{2+\alpha} \quad &\text{and} \quad |b_\hxi-b_\he| \le 8 (3)^{1+\alpha/2} \sqrt{m}M \rho^{1+\alpha}\\
\|C_\hxi-C_\he \| \le 4 (3)^{1+\alpha/2} M \rho^{\alpha} \quad &\text{and} \quad |d_\hx-d_\hy| \le 4 (3)^{1+\alpha/2} M \rho^{\alpha}.
\end{aligned}
\end{equation}
By assumption \eqref{eq:CTEH} we easily get that $a_\hxi=f(\hxi)$, $f$ is continuous, $\hat X f(\hxi)=b_\hxi$, $Z f(\hxi)=d_\hxi$. Then by \eqref{eq:cdestH} we obtain that $\hat X f, Z f$ are continuous and $Z f$ is $C^{\alpha}$ . Moreover, since by Baker-Campbell-Hausdorff formula, see \cite[Theorem 15.1.1]{BLU}, we have
\[
\exp(s \hat X_i)(\exp(h \hat X_j)(\hxi))=\exp \left(s \hat X_i+h \hat X_j +\tfrac{sh}{2} [\hat X_j, \hat X_i] + O(s^2h)+ O(h s^2)\right) (\hxi),
\]
thanks to  \eqref{eq:CTEH} a straightforward computation shows  
\begin{align*}
&(f(\exp(s \hat X_i)(\exp(h \hat X_j)(\hxi)) )- f(\exp(h \hat X_j)(\hxi))-(f(\exp(s \hat X_i)(\hxi))-f(\hxi))\\
&=\left(2(C_\hxi)_{i,j} - \sum_{k=1}^n \frac{a_{i,j}^k}{2} Z_k f(\xi) \right)h s+O(s^{2+\alpha})+O(h^{2+\alpha}).
\end{align*}
Then there exists
\begin{align*}
&\lim_{h\to 0} \lim_{s\to 0} \frac{1}{h} \left( \frac{f(\exp(s \hat X_i)(\exp(h \hat X_j)(\hxi)) )- f(\exp(h \hat X_j)(\hxi))}{s}-\frac{f(\exp(s \hat X_i)(\hxi))-f(\hxi)}{s} \right)\\
&=2(C_\hxi)_{i,j} + \sum_{k=1}^n \frac{a_{i,j}^k}{2} Z_k f(\xi).
\end{align*}
On the other hand, letting $s\to 0$ in the previous limit we gain that
\[
\hat X_j \hat X_i f(\hxi)=\lim_{h \to 0 } \frac{\hat X_i f(\exp(h \hat X_j)(\hxi))-\hat X_i f(\hxi)}{h}=2(C_\hxi)_{i,j} - \sum_{k=1}^n \frac{a_{i,j}^k}{2} Z_k f(\xi).
\]
Therefore since $\sum_{k=1}^n a_{i,j}^k Z_k=[\hat X_i, \hat X_j]$ we obtain 
\[
\hat{X}^2 f(\hxi)_{i,j}=\frac{\hat X_i \hat X_j f(\hxi )+\hat X_j \hat X_i f(\hxi)}{2}=2(C_\hxi)_{i,j}
\]
Finally, by \eqref{eq:cdestH} we obtain that 
$|\hat{X}^2 f(\hxi)_{i,j}-\hat{X}^2 f(\he)_{i,j}|\le 8 (3)^{1+\alpha/2} C \tilde{d}(\hxi,\he)^{\alpha}$, for $i,j=2,\ldots,m$.
\end{proof}
\begin{remark}
\label{rk:midpointH}
Given two points $\hxi,\he \in \Pi$ such that $\rho=\tilde{d}(\hxi,\he)$ then there exists $\hat \zeta=(\frac{\hx+\hy}{2}, \frac{t+3\tau}{4}-\frac{1}{8}\escpr{\hat A \hx,\hy} )$ such that $\tilde{d}(\hat \zeta,\he)=\frac{\rho}{2} <\frac{\sqrt{3}}{2} \rho $ and $\tilde{d}(\hat \zeta,\hxi) <\frac{\sqrt{3}}{2} \rho$. Moreover if $\hxi,\he$ belongs to $B_R(\hxi_0)$ for $\hxi_0 \in \Pi$ then $\hat \zeta$ in $B_{2R}(\hxi_0)$.
\end{remark}

\begin{lemma}
\label{lm:abCdH}
Let $\hat v \in \Pi$ and  $P(\hat v,z)=a + b \cdot \hat v+ \hat v^T C \hat v+ d  \cdot z$, where $C$ is a symmetric matrix. Assume that there exists $M>0$ such that  $\|P\|_{L^\infty(B_1)} \le M$, then $|a|\le M$ and $\|C\|,|d| \le 2M$, $|b| \le 2\sqrt{m} M$.
\begin{proof}
Setting $\hat v=0$, $z=0$ we have $|a|\le M$. Let $\eps>0$, if $\hat v=0$, $z=\tfrac{d}{|d|(1+\eps)}$ we get $|a+ \frac{|d|}{1+\eps}| \le M$, thus $|d|\le 2M(1+\eps)$, letting $\eps \to 0 $ we get $|d|\le 2M$.  Let $\hat v^1,\ldots,\hat v^m$ be an orthonormal basis of $C$ with eigenvalue $\lambda_1,\ldots,\lambda_n$. Then for each $i$, setting $\hat v=\pm \tfrac{v^i}{(1+\eps)}$, $z=0$, we obtain 
\[
\left|\frac{b \cdot \hat v^i}{1+\eps}+ \frac{\lambda_i}{(1+\eps)^2}\right| \le 2M, \qquad \left|\frac{b \cdot \hat v^i}{1+\eps}-\frac{\lambda_i}{(1+\eps)^2}\right| \le 2M. 
\]
Then we have 
\begin{align*}
\frac{|b \cdot \hat v_i|}{1+\eps}\le \frac{1}{2} \left( \left|\frac{b \cdot \hat v^i}{1+\eps}+ \frac{\lambda_i}{(1+\eps)^2}\right|+ \left|\frac{b \cdot \hat v^i}{1+\eps}-\frac{\lambda_i}{(1+\eps)^2}\right| \right) \le 2 M\\
\frac{|\lambda_i|}{(1+\eps)^2}\le \frac{1}{2} \left( \left|\frac{b \cdot \hat v^i}{1+\eps}+ \frac{\lambda_i}{(1+\eps)^2}\right|+ \left|\frac{b \cdot \hat v^i}{1+\eps}-\frac{\lambda_i}{(1+\eps)^2}\right| \right) \le 2 M.
\end{align*}
Letting $\eps \to 0$ we get $||C||=\max_{i=1,\ldots,m}|\lambda_{i}| \le 2M$ and $|b|\le 2\sqrt{m}M$.
\end{proof}
\end{lemma}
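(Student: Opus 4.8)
The plan is to read off the four blocks of coefficients --- the constant $a$, the linear vector $b$, the symmetric matrix $C$, and the vector $d$ attached to the central variables --- one at a time, by testing the hypothesis $\|P\|_{L^\infty(B_1)}\le M$ at suitably chosen points. The key structural observation is that on the slice $\{z=0\}$ the polynomial reduces to $a+b\cdot\hat v+\hat v^T C\hat v$, while on the slice $\{\hat v=0\}$ it reduces to the affine function $a+d\cdot z$, so the two groups of variables decouple and can be handled separately.

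I would start with the origin: $P(0,0)=a$ gives $|a|\le M$. For $d$, restrict to $\{\hat v=0\}$ and test at a point $z$ pointing along $d$ with $|z|$ slightly below the extremal admissible value --- the auxiliary parameter $\eps$ records how close we are --- so that $P(0,z)=a+|d|(1+\eps)^{-1}$; combined with $|a|\le M$ this yields $|d|(1+\eps)^{-1}\le 2M$, and letting $\eps\to0$ gives $|d|\le 2M$.

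The genuinely delicate step, and the one I expect to be the main obstacle, is the slice $\{z=0\}$, where $b$ and $C$ appear together and cannot be separated by a single evaluation. The device is to test at antipodal pairs $\pm\hat v$: the half-difference $\tfrac12\bigl(P(\hat v,0)-P(-\hat v,0)\bigr)=b\cdot\hat v$ isolates the linear part, while the half-sum $\tfrac12\bigl(P(\hat v,0)+P(-\hat v,0)\bigr)=a+\hat v^T C\hat v$ isolates the quadratic one. To turn the quadratic form into a single scalar I would diagonalize the symmetric matrix $C$ in an orthonormal eigenbasis $\hat v^{(2)},\dots,\hat v^{(m)}$ with eigenvalues $\lambda_i$, and apply the antipodal evaluations at $\hat v=\pm(1+\eps)^{-1}\hat v^{(i)}$; splitting the two resulting bounds by the triangle inequality gives $|\lambda_i|(1+\eps)^{-2}\le 2M$ and $|b\cdot\hat v^{(i)}|(1+\eps)^{-1}\le 2M$ for every $i$. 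Letting $\eps\to0$ and using $\|C\|=\max_i|\lambda_i|$ produces $\|C\|\le 2M$, while $|b|^2=\sum_i\bigl(b\cdot\hat v^{(i)}\bigr)^2$ summed over the orthonormal basis yields $|b|\le 2\sqrt m\,M$. Beyond this the only bookkeeping is to verify that every test point lies in $B_1$, which holds because on each slice the constraint on the remaining variable is a Euclidean ball centred at the origin, so any fixed multiple of a unit vector with norm below the radius is admissible.
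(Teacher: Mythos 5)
Your proposal follows essentially the same route as the paper's proof: evaluate at the origin to get $a$, restrict to the slice $\hat v=0$ along the direction of $d$ for $|d|$, diagonalize $C$ in an orthonormal eigenbasis and test at $\pm(1+\eps)^{-1}$ times each eigenvector with $z=0$, then let $\eps\to0$. The only cosmetic difference is that you organize the $z=0$ slice as a half-sum/half-difference (even--odd) decomposition before invoking the triangle inequality, whereas the paper applies the triangle inequality to the two evaluations directly; the two bookkeeping devices are interchangeable and lead to the same bounds $|\lambda_i|\le 2M$, $|b\cdot\hat v^{(i)}|\le 2M$, and hence $\|C\|\le 2M$, $|b|\le 2\sqrt m\,M$ via Parseval. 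Both proofs also implicitly use the same normalization of $B_1$ (so that, e.g., $|z|$ up to $1$ is admissible on the slice $\hat v=0$); you flag the membership check explicitly, which is a useful remark, but the treatment is otherwise identical.
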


\begin{lemma}
\label{lm:sypol2H}
Let $D_{0} \subset \Pi=\{v_1=0\}$ be a set axially  symmetric with respect  to $v_i=0$ for $i=2,\ldots,m$ and  $z_k=0$ for $k=1,\ldots,n$. Let 
\[
D_{\xi}=\exp(D_0)=\{\exp(\sum_{i=2}^m v_i \hat X_i + \sum_{k=1}^n z_k Z_k) (\xi) \ : \ (\hat v, z) \in D_0 \}
\]
and  $p$ is a polynomial of of degree less than or equal to $2$, 
\[
p ((\hat v,z))=a_0+ \sum_{i=2}^{m} a_i v_i +\sum_{k=1}^n b_k z_k+\sum_{i,j=2}^{m} c_{i,j} v_j v_i
\]
 Let $k$ be the kernel given by \eqref{eq:kkH}
then we have 
\begin{equation}
\label{eq:kp0H}
\int_{D_{\hat \xi}} k(\hat \xi, \hat \eta) p (\text{Log}_{\xi}( \eta)) \, d \hat \eta=0,
\end{equation}
where $\xi=(0, \hat \xi)$, $\eta=(0,\hat \eta)$ and $\text{Log}_{\xi}(\eta)=(0, -\hat \xi) \circ (0, \hat \eta)$
\end{lemma}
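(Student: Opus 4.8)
The plan is to follow the proof of Lemma~\ref{lm:sypol2}, replacing the Euclidean recentering used there by the group exponential map based at $\xi$, which is exactly the device that decouples the variables in the $H$-type setting.

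\textbf{Step 1: change of variables.} I would parametrize $D_{\hat\xi}$ by $D_0$ through the exponential map: for $(\hat v,z)\in D_0$ put $\hat\eta=\exp\bigl(\sum_{i=2}^m v_i\,\hat X_i+\sum_{k=1}^n z_k\,Z_k\bigr)(\hat\xi)$, so that $\text{Log}_\xi(\eta)=(0,-\hat\xi)\circ(0,\hat\eta)=(0,\hat v,z)$ by construction. Since both $(0,\hat\xi)$ and $(0,\hat v,z)$ lie in the subgroup $\Pi=\{x_1=0\}$, their product only sees the submatrices $\hat A^{(k)}=(a^k_{ij})_{i,j=2,\ldots,m}$ — the index-$1$ rows and columns of $A^{(k)}$ are annihilated by the vanishing first coordinates — so the group law gives $\hat y=\hat x+\hat v$ and $\tau_k=t_k+z_k+\tfrac{1}{2}\escpr{\hat A^{(k)}\hat x,\hat y}$. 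Hence $(\hat v,z)\mapsto(\hat y,\tau)$ is a unipotent affine map with Jacobian $1$, so $d\hat\eta=d\hat v\,dz$ up to the fixed normalization of the surface measure on $\Pi$. Rewriting $k(\hat\xi,\hat\eta)=\hat k((\hat v,z))$ via Definition~\ref{def:kkH} and $p(\text{Log}_\xi(\eta))=p((\hat v,z))$, the integral in \eqref{eq:kp0H} becomes $\int_{D_0}\hat k((\hat v,z))\,p((\hat v,z))\,d\hat v\,dz$ with $\hat k$ as in \eqref{eq:kkHO}; the variables are now fully decoupled.

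\textbf{Step 2: parity.} I would split $\hat k=\sum_{i=2}^m\sum_{k=1}^n\hat k_{i,k}$, where $\hat k_{i,k}$ is proportional to $a_{1,i}^k\,v_i\,z_k\bigl(|\hat v|^4+16|z|^2\bigr)^{-(Q+2)/4}$. Since $|\hat v|^4+16|z|^2$ is even in each $v_j$ and each $z_l$ separately, $\hat k_{i,k}$ is odd in $v_i$, odd in $z_k$, and even in all other variables. Expanding $p$ into its degree-$\le2$ monomials $1,v_j,z_l,v_jv_{j'}$ (these exhaust the degree-$\le2$ monomials because $\deg v_j=1$ and $\deg z_l=2$, so no $v_jz_l$ or $z_lz_{l'}$ occurs), one checks that for each pair $(i,k)$ and each monomial $\mu$ of $p$: either $\mu$ does not involve $z_k$ — which holds for every monomial except $z_k$ itself — and then $\hat k_{i,k}\mu$ is odd in $z_k$, or else $\mu=z_k$ and then $\hat k_{i,k}\mu$ is odd in $v_i$. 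In either case the integrand is odd under one of the axial symmetries of $D_0$, hence integrates to $0$ over $D_0$. Summing over $i$ and $k$ yields \eqref{eq:kp0H}. Because $\hat k$ is homogeneous of degree $-(Q-1)$, equal to minus the homogeneous dimension of $\Pi$, the integral is to be read as a principal value; but the truncations $D_0\smallsetminus B_\eps(0)$ stay axially symmetric, so the cancellation is exact at every scale and the limit $\eps\to0$ causes no trouble.

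I expect the only genuinely new point compared with Lemma~\ref{lm:sypol2} — where $\Pi$ is abelian and the exponential map is a plain translation — to be Step~1: one must check that writing both $D_{\hat\xi}$ and the kernel through $\text{Log}_\xi$ confines the mixed term $\tfrac{1}{2}\escpr{\hat A^{(k)}\hat x,\hat y}$ to the ($\tau$-independent, hence constant-Jacobian) passage from $z$ to $\tau$, so that the decoupled parity computation of Step~2 applies verbatim in the intrinsic coordinates $(\hat v,z)$. This is precisely the decoupling on $\Pi$ anticipated in the introduction.
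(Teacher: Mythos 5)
Your proof is correct and follows the same route as the paper's: change of variables via $\mathrm{Log}_\xi$ with unit Jacobian to reduce the integral to $\int_{D_0}\hat k(\hat v,z)\,p(\hat v,z)\,d\hat v\,dz$, then a parity argument over the axially symmetric domain $D_0$. Your Step 2 spells out the parity cancellation term-by-term (splitting $\hat k$ into the pieces $\hat k_{i,k}$ and pairing each with each monomial of $p$), which is more explicit than the paper's brief assertion that the product is ``odd in some variables,'' and your remark about the principal-value interpretation is a correct and welcome precision, but the method is the same.
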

\begin{proof}
Changing the variable $(\hat v, z)=\Phi(\eta)=\text{Log}_{\xi}(\eta)$ we have  $\det(d\Phi)=1$. Therefore  we gain that the left hand side of \eqref{eq:kp0H} is equivalent to 
\begin{equation}
\label{eq:intproofoddlemma}
\int_{D_{0}} \hat k(\hat v, z) p (\hat v, z) \, d \hat v dz
\end{equation}
Since the kernel 
\[
\hat k((\hat v, z))= C_Q (2-Q) \dfrac{ 4 \sum_{i=2}^m \sum_{k=1}^n  a_{1,i}^k v_i z_k}
{  \Big( | \hat v |^4 + 
16 |z|^2 \Big)^{\frac{Q+2}{4}} }
\] 
is symmetric both in $v_i$ for $i=2\ldots,m$ and in $z_k$ for $k=1,\ldots,n$ the product between $\hat k$ and $p$ is still odd in some variables. Then the integral \eqref{eq:intproofoddlemma} vanishes on the axial symmetric domain $D_0$. 
\end{proof}

\begin{theorem}
\label{th:KcontH}
Let $k$ be the kernel defined in \eqref{eq:kkH}, we set  
\[
K(f)( \hat \xi)=\int_{\Pi} k(\hat \xi, \hat  \eta) f(\hat \eta) d \hat \eta
\]
for each $\hat \xi \in \Pi$. Assume that $f \in C^{2,\alpha}(\Pi)$ and $f$  compactly supported in $\Pi$ then there exists a constant $C$ such that 
\begin{equation}
\label{eq:C2aestimateH}
\| K (f) \|_{C^{2,\alpha}(\Pi)} \le C \| f \|_{C^{2,\alpha}(\Pi)}
\end{equation}
\end{theorem}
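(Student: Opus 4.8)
The plan is to identify $K$ with a Calderón--Zygmund convolution operator on the homogeneous group underlying $\Pi$ and then invoke the regularity theory of Nagel and Stein for such operators on non-isotropic Hölder classes, converting at the end to the classical $C^{2,\alpha}$ norm via Proposition~\ref{prop:Gamma=C2H}. First I would record the structure of $\Pi=\{v_1=0\}$: equipped with the multiplication inherited from $\GG$, namely $(\hat v,z)\circ(\hat v',z')=(\hat v+\hat v',\,z_k+z'_k+\tfrac12\escpr{\hat A^{(k)}\hat v,\hat v'})$, and with the dilations $(\hat v,z)\mapsto(\lambda\hat v,\lambda^2 z)$, the set $\Pi$ is a $2$-step homogeneous Lie group of homogeneous dimension $Q-1=(m-1)+2n$, and $\tilde d$ of \eqref{eq:distancetildeH} is the associated gauge distance; by Definition~\ref{def:kkH}, $K$ is the operation of convolution on $\Pi$ against the kernel $\hat k$ of \eqref{eq:kkHO}. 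I would then check that $\hat k$ is an admissible singular kernel: (i) $\hat k\in C^\infty(\Pi\smallsetminus\{0\})$, which is immediate since the denominator is a positive power of $|\hat v|^4+16|z|^2$; (ii) $\hat k$ is homogeneous of degree $-(Q-1)$ with respect to the dilations, since the numerator $\sum a_{1,i}^k v_iz_k$ has degree $3$ while the denominator has degree $Q+2$; and (iii) the cancellation condition, i.e. that the integral of $\hat k$ over symmetric annuli $\{a\le\tilde d(0,\cdot)\le b\}$ vanishes, which is exactly Lemma~\ref{lm:sypol2H} with $p\equiv1$, because $\tilde d$-balls are axially symmetric in each $v_i$ and each $z_k$ whereas $\hat k$ is odd in those variables. (If every $a_{1,i}^k$ vanishes then $K=0$ and there is nothing to prove.)

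Next, writing $K(f)(\hat\xi)=\int_{\Pi\smallsetminus\hat B_1(\hat\xi)}\hat k\,f+\int_{\hat B_1(\hat\xi)}\hat k\,(f-f(\hat\xi))$ exactly as in the proof of Theorem~\ref{th:Kcont}, the far piece is harmless because $f$ has compact support and $|\hat k|\lesssim\tilde d^{-(Q-1)}$ (Young's inequality, as in Lemma~\ref{lm:coarea}). For the singular part I would invoke the theorem of Nagel and Stein \cite{NagelStein79} (see also \cite[Theorem~5.3]{MR657581}): a convolution operator on a homogeneous group whose kernel is smooth off the origin, homogeneous of the critical degree, and satisfies the cancellation condition is bounded on the Folland--Stein classes $\Gamma^{k,\alpha}$ for every integer $k\ge0$ and every $0<\alpha<1$. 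Applied with $k=2$ this yields $\|K(f)\|_{\Gamma^{2,\alpha}(\Pi)}\le C\|f\|_{\Gamma^{2,\alpha}(\Pi)}$, and Proposition~\ref{prop:Gamma=C2H}, which identifies $\Gamma^{2,\alpha}(\Pi_0)$ with $C^{2,\alpha}(\Pi_0)$ with comparable norms, converts this into \eqref{eq:C2aestimateH}.

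I expect the main obstacle to lie in the bookkeeping that puts the statement in the precise form required by the Nagel--Stein machinery: verifying that $\Pi$ with the restricted law is genuinely a homogeneous Lie group (not merely a metric space), matching the weighted-degree polynomials of Definition~\ref{def:CKalpha} with the group polynomials used in \cite{NagelStein79}, and checking the cancellation condition in their normalization --- all of which, after the measure-preserving change of variables $(\hat v,z)=\text{Log}_\xi(\eta)$ of Lemma~\ref{lm:sypol2H}, boils down to the oddness of $\hat k$ combined with the symmetry of $\tilde d$-balls. Unlike the $\hh^1$ case, $\hat k$ cannot be written as a single horizontal tangential derivative of an integrable kernel, which is precisely why the deeper result of Nagel and Stein is used here in place of the elementary argument based on \cite[Lemma~4.4]{GT}.
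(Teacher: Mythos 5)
Your proposal follows essentially the same route as the paper: identify $\Pi$ with the induced law as a homogeneous group of homogeneous dimension $Q-1$, check that $\hat k$ of \eqref{eq:kkHO} is $C^\infty$ away from the origin, homogeneous of degree $1-Q$, and satisfies the cancellation condition (via the oddness argument of Lemma~\ref{lm:sypol2H}), invoke Nagel--Stein \cite{NagelStein79} for $\Gamma^{2,\alpha}$-boundedness, and convert to $C^{2,\alpha}$ via Proposition~\ref{prop:Gamma=C2H}. The only cosmetic difference is that the paper makes the Nagel--Stein invocation precise by realizing $K$ as a pseudo-differential operator with variable symbol $a(\hxi,\zeta)=\mathcal{F}(\hat k)(\tilde L_{\hxi}\zeta)$ in the class $\mathcal{S}^0_{\tilde d}$ and citing their Theorems~1 and~13, whereas you appeal directly to $\Gamma^{k,\alpha}$-boundedness of homogeneous-group convolution singular integrals; both read off the same result from the same source.
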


\begin{proof}
First of all we notice that $\Pi$ with the law induced by $\GG$ is an homogenous group of homogeneous dimension $Q-1$. Setting $(0,\hat v, z)=\text{Log}_{(0,\hxi)}(0,\he)$ we have that $\hat k (\hat v, z)$ defined in \eqref{eq:kkHO} is $C^{\infty}(\Pi \smallsetminus {\{0\}})$, homogeneous of degree $1-Q$ and thanks to Lemma \ref{lm:sypol2H} defines a singular integral on $\Pi$.  Following \cite[Section 3]{Jerison1} or \cite[p. 32]{NagelStein79} there exists a linear map $L_{\hxi}$ such that $L_{\hxi}(\hxi- \he)=(0,\he)^{-1} \circ (0,\hxi)$. Denote $\tilde{L}_{\hxi}= (L_{\hxi}^{-1})^T$ then  $K$ is realized as a pseudo-differential operator with symbol $a(\hxi, \zeta)=\mathcal{F}(\hat k)(\tilde{L}_{\hxi}(\zeta))$ where $\mathcal{F}$ denotes the Fourier transform and $\hat k$ does not  refer to the Fourier  transform, but to definition \eqref{eq:kkHO}. Since $\hat k$ is of class $1-Q$, by \cite[Theorem 1, p. 9]{NagelStein79} we have $\mathcal{F}(\hat k)$ is of class $0$. Therefore the symbol $a$ belongs to the class $\mathcal{S}_{\tilde d}^0$, see \cite[p. 56]{NagelStein79}. Then by \cite[Theorem 13, p. 83]{NagelStein79} we get that $a(\hxi, D)$ (that coincides with $K$) is bounded from $\Gamma^{2,\alpha}(\Pi)$ to $\Gamma^{2,\alpha}(\Pi)$. Hence, by Proposition \ref{prop:Gamma=C2H} we obtain the desired estimates \eqref{eq:C2aestimateH} in $C^{2,\alpha}(\Pi)$.
\end{proof}

\subsection{The reflection technique}
\begin{definition}
Let $g$ be a function in $\Pi$ and $x \in \hh^1 \smallsetminus \Pi$. We set 
$$
\tilde K_1(g) (\xi) = \int_{\Pi} \tilde k_1(\xi,\he) g(\he) d\sigma(\he), \qquad \tilde K(g) (\xi) = \int_{\Pi} \tilde k(\xi,\he) g(\he) d\sigma(\he)
$$
where
\begin{equation}
\label{eq:tidek1H}
\tilde k_1(\xi,\he)=
C_Q (Q-2) \dfrac{ |x-(0,\hat y)|^2 x_1  }
{  \Big( \big| x-(0,\hat y) \big|^4 + 
16 \sum_{k=1}^n \big(\tau_k - t_k - \frac{1}{2}  \escpr{ \hat A^{(k)}\hx,\hy)} \big)^2 \Big)^{\frac{Q+2}{4}}}
\end{equation}
and
\begin{equation}
\label{eq:tidekH} \tilde k(\xi,\he)=
 C_Q (2-Q) \dfrac{ 4 \sum_{i=2}^m \sum_{k=1}^n  \big(\tau_k - t_k - \frac{1}{2}  \escpr{\hat A^{(k)} \hx, \hat y } \big) a_{1,i}^k (y_i-x_i)}
{  \Big( \big| x-(0,\hat y) \big|^4 + 
16 \sum_{k=1}^n \big(\tau_k - t_k - \frac{1}{2}  \escpr{ \hat A^{(k)} \hx, \hat y} \big)^2 \Big)^{\frac{Q+2}{4}}},
\end{equation}
\end{definition}

\begin{remark}
\label{rk:tiledekH}
Notice that $\tilde k(\xi,\he)$ defined in \eqref{eq:tidekH} converges to the convolution kernel $k(\hxi,\he)$ defined in \eqref{eq:kkH} as one approaches the boundary.
\end{remark}

\begin{lemma}\label{tildelimitH}
Let  $g$ be  a Lipschitz compactly supported function in 
$\Pi$ and $\hxi_0$ be a point in $\Pi$.  
For $x \in \hh^1 \smallsetminus \Pi$ we consider
$$
\tilde{K}_1(g) (\xi) = \int_{\Pi} \tilde{k}_1(\xi,\he) g(\he) d\sigma(\he),
$$
where $\tilde{k}_1$ is defined in \eqref{eq:tidek1H}.
Then we  have
\begin{align*}
 &\tilde{K}_1(g) (\xi) \to \frac{1}{2} g(\hxi_0) \quad \text{ as } \xi \to \hxi_0^+,\\
 &\tilde{K}_1(g) (\xi) \to -\frac{1}{2} g(\hxi_0) \quad  \text{ as } \xi \to \hxi_0^-,
\end{align*}
so that $(K_1)^+=\tfrac{1}{2}\text{Id}$ while restricted to $\Pi$ and $(K_1)^-= - \frac{1}{2} \text{Id}$ while restricted to $\Pi$.
 \end{lemma}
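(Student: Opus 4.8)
The plan is to carry over the proof of Lemma~\ref{tildelimit} to the $H$-type setting, using Proposition~\ref{frompositiveH} and Remark~\ref{rk:tiledekH} in place of their $\hh^1$ counterparts. The statement concerns only $\tilde K_1$, whose kernel $\tilde k_1$ of \eqref{eq:tidek1H} differs from the kernel $k_1$ of \eqref{k1H} solely in the $t$-increment appearing in the denominator: $k_1$ carries the mixed term $\tfrac12\escpr{A^{(k)}x,(0,\hat y)}$, whereas $\tilde k_1$ carries $\tfrac12\escpr{\hat A^{(k)}\hat x,\hat y}$. Writing $x=(x_1,\hat x)$, the difference of these two quantities equals $\tfrac{x_1}{2}\sum_{j\ge 2}a^{k}_{j,1}y_j$, which vanishes on $\Pi$ (that is, as $x_1\to0$) and, since $g$ is compactly supported, is $O(x_1)$ uniformly in $\hat\eta\in\operatorname{supp}g$; this is the kernel‑level content of Remark~\ref{rk:tiledekH}. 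In particular $\tilde k_1$ has the same size as $k_1$, so the bound $|\tilde k_1(\xi,\hat\eta)|\le C\sqrt{x_1}\,d(\xi,(0,\hat\eta))^{-Q+1/2}$ used in the proof of Proposition~\ref{frompositiveH} holds verbatim.

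I would then argue directly rather than by comparison. Because $\tilde k_1$ has no obstructing mixed term, the change of variables $w_k=\tau_k-t_k-\tfrac12\escpr{\hat A^{(k)}\hat x,\hat y}$ (a translation of $\tau$ for fixed $\hat y$, of unit Jacobian) turns $\int_\Pi\tilde k_1(\xi,\hat\eta)\,d\sigma(\hat\eta)$ into the Euclidean integral of $C_Q(Q-2)(x_1^2+|\hat x-\hat y|^2)x_1\big((x_1^2+|\hat x-\hat y|^2)^2+16|w|^2\big)^{-(Q+2)/4}$ over $(\hat y,w)\in\rr^{m-1}\times\rr^{n}$. This integrand is homogeneous of the correct degree under $(x_1,\hat x-\hat y,w)\mapsto(\lambda x_1,\lambda(\hat x-\hat y),\lambda^2 w)$, so the integral is independent of $x_1>0$; evaluating it at $x_1=1$, integrating out $w$ first, reduces it to a one‑variable beta‑type integral, which a direct computation identifies with the universal constant $\tfrac12$ (consistently with the jump relations of the approximate operator). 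Then, splitting $\tilde K_1(g)(\xi)=\int_{\hat B_R(\hat\xi_0)}\tilde k_1(\xi,\hat\eta)(g(\hat\eta)-g(\xi))\,d\sigma(\hat\eta)+g(\xi)\int_{\hat B_R(\hat\xi_0)}\tilde k_1(\xi,\hat\eta)\,d\sigma(\hat\eta)$ exactly as in Proposition~\ref{frompositiveH}, the first term tends to $0$ by the Lipschitz bound on $g$ and the size estimate above, while the second converges to $\tfrac12 g(\xi_0)$ by the previous step. The limit from $x_1<0$ follows from the antisymmetry $x_1\mapsto-x_1$ of $\tilde k_1$, which only flips the sign, giving $-\tfrac12 g(\xi_0)$.

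The main obstacle is exactly why one should not imitate the comparison step of Lemma~\ref{tildelimit} literally: the factorization $\tilde k_1=k_1\cdot(\text{denominator ratio})$ leads to a bound $|K_1(g)(\xi)-\tilde K_1(g)(\xi)|\le\big(\sup_{\hat\eta}|1-(\text{ratio})|\big)\,K_1(|g|)(\xi)$ in which the supremum does \emph{not} tend to $0$: near the diagonal both denominators degenerate like $d(\xi,(0,\hat\eta))^4$, so an $O(x_1)$ perturbation of the $t$-increment is not a uniformly small relative perturbation of the denominator. One must therefore either exploit the sign change of $k_1-\tilde k_1$, so that the diagonal contribution to $\int(k_1-\tilde k_1)g\,d\sigma$ cancels rather than being small in absolute value, or — as above — bypass the comparison entirely and use the scaling structure of $\tilde k_1$ directly; in that route the only genuinely non‑routine point is the one‑dimensional evaluation of the constant $\tfrac12$, after which the statement (and the remaining assertion $(K_1)^{\pm}=\pm\tfrac12\,\mathrm{Id}$ on $\Pi$) follows as in Proposition~\ref{frompositiveH}.
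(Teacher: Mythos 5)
Your proposal is correct, and it exposes a genuine gap in the paper's own argument for Lemma~\ref{tildelimitH} (and, by the same token, Lemma~\ref{tildelimit}). The paper proceeds by the comparison bound
$|K_1(g)(\xi)-\tilde K_1(g)(\xi)|\le\sup_{\hat\eta}|1-D_1(\xi,\hat\eta)/D_2(\xi,\hat\eta)|\cdot K_1(|g|)(\xi)$,
where $D_1,D_2$ denote the denominators of $k_1,\tilde k_1$ raised to the power $(Q+2)/4$, and asserts that the supremum vanishes as $x_1\to 0$. As you observe, this is false: taking $\hat\eta$ with $\hat y=\hat x$ and $\tau=t$ gives $D_2=x_1^{Q+2}$, while $D_1=\big(x_1^4+4x_1^2R\big)^{(Q+2)/4}$ with $R=\sum_k\big(\sum_{i\ge 2}a^k_{i1}x_i\big)^2$, so the ratio grows like $x_1^{-(Q+2)/2}$ whenever $R\ne 0$, which happens for generic $\hat x_0$. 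Thus the supremum does not tend to zero; it blows up. The lemma's conclusion is true, but the published one-line comparison does not prove it.

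Your replacement is a genuinely different and, as written, sound route. Passing to $w_k=\tau_k-t_k-\tfrac12\langle\hat A^{(k)}\hat x,\hat y\rangle$ (unit Jacobian for fixed $\hat y$) and then scaling $(x_1,\hat y-\hat x,w)\mapsto(\lambda x_1,\lambda(\hat y-\hat x),\lambda^2 w)$ shows $\int_\Pi\tilde k_1(\xi,\cdot)\,d\sigma$ is a constant $c$ for $x_1>0$ (the exponent count $3+(m-1)+2n-(Q+2)=0$ uses $Q=m+2n$); the splitting $\tilde K_1(g)=\int_{\hat B_R}\tilde k_1\,(g-g(\xi))+g(\xi)\int_{\hat B_R}\tilde k_1$ from Proposition~\ref{frompositiveH}, together with the size estimate $|\tilde k_1|\lesssim \sqrt{x_1}\,d^{-Q+1/2}$, then yields the one-sided limit $c\,g(\xi_0)$, and the antisymmetry of $\tilde k_1$ in $x_1$ gives the sign flip. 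The only item left implicit is $c=\tfrac12$: rather than evaluating the beta-type integral (which requires the explicit Kaplan constant $C_Q$), it is cleaner to note that the analogous $\tau$-translation $\tau_k\mapsto\tau_k-\tfrac12\langle A^{(k)}x,(0,\hat y)\rangle$ (again unit Jacobian, $\hat y$ fixed) gives $\int_\Pi k_1(\xi,\cdot)=\int_\Pi\tilde k_1(\xi,\cdot)=c$ as well; since $\int_{\Pi\smallsetminus\hat B_R}k_1\to 0$ as $x_1\to 0$ and $\int_{\hat B_R}k\to 0$ by Lemma~\ref{lm:sypol2H}, Lemma~\ref{gaHT} forces $c=\lim\int_{\hat B_R}(k_1+k)=\tfrac12$. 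This both fixes the gap and avoids the explicit constant.
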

\begin{proof}  
By Proposition \ref{frompositiveH} $K_1 (g) (\xi)$ converges to $\pm \tfrac{1}{2} g$ and since 
\begin{align*}
&|K_1 (g) (\xi)- \tilde{K}_1(g)(\xi)| \\
&\leq  \sup_{\hxi, \he} \left|1- \tfrac{\Big( \big| x-(0,\hat y) \big|^4 + 
16 \sum_{k=1}^n \big(\tau_k - t_k - \frac{1}{2}  \escpr{ A^{(k)}x,(0,\hat y)} \big)^2 \Big)^{\frac{Q+2}{4}}}{\Big( \big| x-(0,\hat y) \big|^4 + 
16 \sum_{k=1}^n \big(\tau_k - t_k - \frac{1}{2}  \escpr{ \hat A^{(k)}\hx,\hy)} \big)^2 \Big)^{\frac{Q+2}{4}}} \right| K_1(g)(\xi)
\end{align*}
we have $K_1 (g) (\xi)- \tilde{K}_1(g)(\xi)$ goes to zero when $x_1$ tends to $0$. Then also
$\tilde{K}_1(g)(\xi)$ converges to $\tfrac{1}{2} g$ when $x_1\to0^+$ and $\tilde{K}_1(g)(\xi)$ converges to $-\tfrac{1}{2} g$ when $x_1\to0^-$ .
\end{proof}

Given $r \in \rr$, let us denote $\Pi_r=\{\xi=(r, \hx, t)\}$. We consider the $C^{2, \alpha}(\Pi_r)$ norm with respect to the distance $\tilde{d}$  as we did in Section  \ref{sc:c2alphaestimeteH}. This choice allows us to completely decouple variables and we have

\begin{proposition}\label{IKnormH}  
Let $\Pi=\{ x_1=0\}$ and $K$ the singular operator defined by the kernel $k$, see \eqref{kH}. Then we have
$$||(-\frac{1}{2}I + K)(g)||_{C^{2, \alpha}(\Pi)} = ||(\frac{1}{2}I + K)(g)||_{C^{2, \alpha}(\Pi)}.$$
\end{proposition}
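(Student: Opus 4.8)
The plan is to mimic exactly the argument of Proposition~\ref{IKnorm} in the first Heisenberg group, but now carried out in the $H$-type setting with the decoupled distance $\tilde d$ introduced in \eqref{eq:distancetildeH}. The whole point is that on the hyperplane $\Pi_r=\{x_1=r\}$, equipped with the norm $C^{2,\alpha}(\Pi_r)$ built from $\tilde d$, all the mixed terms coming from the noncommutativity disappear and the geometry is identical for every $r$; the noncommutativity is only visible through the modified kernels $\tilde k_1$ and $\tilde k$ of \eqref{eq:tidek1H}--\eqref{eq:tidekH}, which \emph{do} allow the reflection $x_1\mapsto -x_1$ precisely because the increment $\tau_k - t_k - \tfrac12\escpr{\hat A^{(k)}\hx,\hy}$ no longer involves $x_1$.

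First I would record the translation-invariance/scale-invariance observation: since the $C^{2,\alpha}(\Pi_r)$ norm with respect to $\tilde d$ is \emph{independent of $r$} (the distance $\tilde d$ on $\Pi_r$ in \eqref{eq:distancetildeH} does not see $r$ at all, and the vector fields $\hat X_j$ restricted to $\Pi_r$ are the same for all $r$), and since the reflected operators $\tilde K_1, \tilde K$ applied to the reflected function $g(-\cdot,\cdot,\cdot)$ produce exactly the same family of kernels on $\Pi_{-r}$ as the original ones produce on $\Pi_r$, we get the identity
\[
\|(\tilde K_1 + \tilde K)(g)(-\,\cdot,\cdot,\cdot)\|_{C^{2,\alpha}(\Pi_{-r})} = \|(\tilde K_1 + \tilde K)(g)\|_{C^{2,\alpha}(\Pi_r)}.
\]
This is the exact analogue of the corresponding step in the proof of Proposition~\ref{IKnorm}; here one must check that the change of variable $x_1\mapsto -x_1$ sends $\tilde k_1(\xi,\cdot)$ to $-\tilde k_1$ and fixes $\tilde k(\xi,\cdot)$, which is immediate from the formulas \eqref{eq:tidek1H}--\eqref{eq:tidekH} since only $x_1$ changes sign and it enters $\tilde k_1$ linearly in the numerator and only through $x_1^2$ in both denominators, while in $\tilde k$ it enters only through $x_1^2$ in the denominator and not at all in the numerator.

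Then I would let $r\to 0$. By Lemma~\ref{tildelimitH}, $\tilde K_1(g)(\xi)\to \tfrac12 g(\hxi_0)$ as $\xi\to\hxi_0^+$ and $\to -\tfrac12 g(\hxi_0)$ as $\xi\to\hxi_0^-$; by Remark~\ref{rk:tiledekH}, $\tilde K(g)(\xi)\to K(g)(\hxi_0)$ from either side, with $K$ the singular operator of \eqref{eq:kkH} and continuous on $C^{2,\alpha}(\Pi)$ by Theorem~\ref{th:KcontH}. So the left side of the displayed identity converges (in $C^{2,\alpha}$) to $\|(-\tfrac12 I + K)(g)\|_{C^{2,\alpha}(\Pi)}$ and the right side to $\|(\tfrac12 I + K)(g)\|_{C^{2,\alpha}(\Pi)}$, giving the claim. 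To make the passage to the limit rigorous in the $C^{2,\alpha}$ norm (not just pointwise) one invokes the uniform kernel estimates behind Theorem~\ref{th:KcontH} / \cite[Theorem 13, p.~83]{NagelStein79}, together with Proposition~\ref{prop:Gamma=C2H} identifying $C^{2,\alpha}(\Pi)$ with $\Gamma^{2,\alpha}(\Pi)$; the convergence $\tilde k_1\to$ (delta-type limit) and $\tilde k\to k$ is controlled uniformly away from the diagonal and the near-diagonal contribution is handled by the cancellation Lemma~\ref{lm:sypol2H} exactly as in the proof of Proposition~\ref{frompositiveH}.

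The main obstacle is this last point: justifying that the equality of $C^{2,\alpha}$ \emph{norms} survives the limit $r\to0$, i.e. that the convergence of $\tilde K_1(g)+\tilde K(g)$ on $\Pi_r$ to $(\pm\tfrac12 I + K)(g)$ on $\Pi$ is strong enough (uniform in the relevant seminorms) rather than merely pointwise. Everything else is a bookkeeping transcription of the $\hh^1$ argument; the genuine content is that $\tilde d$ decouples the variables so that reflection is legal for the tilde-operators, and that the tilde-operators have the same limiting boundary behaviour as the true operators $K_1,K$. Once the uniform convergence is in hand, letting $r\to0$ in the reflection identity yields the stated equality of norms.
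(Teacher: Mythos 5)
Your proposal reproduces the paper's argument for Proposition~\ref{IKnormH}: the same $r$-independence of the $C^{2,\alpha}(\Pi_r)$ norm taken with respect to $\tilde d$, the same observation that the tilde-kernels \eqref{eq:tidek1H}--\eqref{eq:tidekH} are respectively odd and even in $x_1$ (this is precisely what the decoupled distance buys and is why the paper replaces $k_1,k$ by $\tilde k_1,\tilde k$), the same reflection identity between $\Pi_{-r}$ and $\Pi_r$, and the same passage to the limit $r\to 0$ via Lemma~\ref{tildelimitH} and Remark~\ref{rk:tiledekH}. The one thing you flag as an unresolved obstacle --- that Lemma~\ref{tildelimitH} and Remark~\ref{rk:tiledekH} only give \emph{pointwise} convergence of $\tilde K_1(g)$ and of the kernel $\tilde k$, whereas the conclusion requires the traces $(\tilde K_1+\tilde K)(g)|_{\Pi_{\pm r}}$ to converge to $(\pm\tfrac12 I + K)(g)$ in the $C^{2,\alpha}$ norm --- is also glossed over in the paper's one-line proof, so you have not missed an ingredient the authors supply; you have correctly identified the place where both your write-up and theirs would need a uniform (near-diagonal cancellation plus off-diagonal decay) estimate to be fully rigorous.
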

\begin{proof}
Since the $C^{2,\alpha}$ norm on $\Pi_r$  with respect to the distance $\tilde{d}$ are independent on $r$, we have
$$||(\tilde K_1 + \tilde K)(g)(- \cdot, \cdot, \cdot)||_{C^{2, \alpha}(\Pi_{-r})} = ||(\tilde K_1 + \tilde K)(g)||_{C^{2, \alpha}(\Pi_r)}.$$
Letting $r$ to 0, and applying Lemma \ref{tildelimitH} and Remark \ref{rk:tiledekH} we get the thesis.
\end{proof}

All the results in Section \ref{sc:methodcontinuity} follow in the same way and we obtain that $\tfrac{1}{2} I +  K $ is invertible from $C^{2,\alpha}(\Pi)$ to $C^{2,\alpha}(\Pi)$.

\subsection{The Poisson kernel and Schauder estimates}
Once we have the invertibility of $\tfrac{1}{2} I +  K $, we consider the Poisson kernel \eqref{eq:Pg} and 
 the analogous of Theorem \ref{c:schauderGroups} for a bounded domain $\Omega \subset \GG$  follows in the same way, clearly replacing $\Delta_{\hh^1}$ with $\Delta_{\GG}$ . The definition of smooth domain is the same of Definition \ref{def:smoothboundary} and we say that $\xi$ in $\partial \Omega$ is a \textit{characteristic point} if $\nabla_{\GG} \psi (\xi)=0$, where $\psi$ is  the defining function of the boundary defined in  \ref{def:smoothboundary}.

\begin{theorem} \label{c:schauder2H} 
Let $\Omega \subset \GG$ be a smooth bounded domain and $u$ is the unique
solution to
$$\Delta_{\GG} u=f\; \text{in}\  \Omega, \quad u= g \text{ on }\,  \partial \Omega, $$
where $f \in C^\alpha(\bar \Omega)$ and $g \in  \Gamma^{2, \alpha} (\partial \Omega)$ and $0<\alpha<1$. 
Let $\bar \xi \in  \partial\Omega $ be a non-charateristic point, $V\subset \GG$ be an open neighborhood of $\bar \xi $ without charateristic points and $\phi\in C^\infty_0(V)$ be a bump function equal to $1$ in neighborhood $V_0 \subset \subset V$ of $\bar \xi$. Then we have 
$\phi u \in C^{2, \alpha}(\bar \Omega\cap V)$ and 
\begin{equation}
	\label{stimeH}
\|\phi u\|_{C^{2, \alpha}(\bar \Omega\cap V)} \leq C (\|g\|_{ \Gamma^{2, \alpha} (\partial \Omega)} + \|f\|_{C^\alpha(\bar \Omega)}).\end{equation}
\begin{proof}
Let us denote by $\Omega$ a smooth, open bounded set in $\GG$ and let 
$0\in \partial \Omega$ be a non characteristic point.  
The boundary of $\Omega$ can be identified in a neighborhood $V$ with the graph of a regular 
function $w$, defined on a neighborhood $\hat V=V\cap \rr^{m+n-1}$ of $0$: 
$$\partial \Omega \cap V= \{(w(\hat s), \hat s): \hat s\in \hat V  \}.$$
We can as well assume that $w(0) = 0$,  $\nabla w=0$. This implies that 
\begin{equation}\label{tuttoqui}
w(\hat s) = O(|\hat s|^2) 
\end{equation}
as $\hat s \to 0$. 
On the set $V$ the function 
$\Xi(s_1, \hat s) = (s_1 - w(\hat s), \hat s) $
is a diffeomorphism. It sends  $\partial \Omega\cap V$ to a subset of the plane $\{x_1 =0\}$:
$$\Xi(\partial \Omega \cap V) =\{(x_1, \hxi): x_1 =0\}= \Pi_{\Xi}.$$
Moreover, we have 
\begin{equation}\label{ohscusa}\Delta_\Xi =  d\Xi(\Delta_{\hh^1}), \end{equation}
with fundamental solution 
$$ \Gamma_\Xi(\xi) = \Gamma(x_1 + w(\hxi), \hxi).$$
For $x_1$ small enough we have
$$ \Gamma_\Xi(x_1, \hxi) = \Gamma(x_1+w(\hxi), \hxi)
= \Gamma(x_1, \hxi)
 +  R(x_1, \hxi), $$
 where 
 $$  R(x_1,\hxi)=w(\hxi) \nabla \Gamma( x_1 +\sigma w(\hxi), \hxi) $$
 for some $\sigma \in (0,1)$. Furthermore we have that 
 $$X_{1,\Xi}^{\xi}=d\Xi(X_1^{\xi})=X_1^{\xi}-\frac{1}{2}\sum_{k=1}^n a_{1,i}^k x_i \partial_{t_k} w(\hxi) \partial_{x_1}.$$
 Notice that $\Gamma$ is a rational function that goes as $d^{-Q+2}$, its first derivatives go as $d^{-Q+1}$ and its second derivatives go as $d^{-Q}$. On the other side the function $w(\hxi)$ has a 0 of order 2 thus $w(\hxi)$ goes as $d^{2}$. Then we have 
 \[
 X_{1,\Xi}^{\eta} \Gamma_\Xi(0, \he)=X_1^{\eta} \Gamma (0,\he)+ \hat R(0,\he),
 \]
 where 
 \[
 \hat R(0,\he)=X_1^{\eta} R(0,\he)-\frac{1}{2}\big( \sum_{k=1}^n a_{1,i}^k y_i \partial_{t_k} w(\he) \big) \partial_{y_1} R(0,\he) -\frac{1}{2}\big( \sum_{k=1}^n a_{1,i}^k y_i \partial_{t_k} w(\he) \big)\partial_{y_1} \Gamma(0,\hat y)
 \]
 that goes as 
 \[
 |\hat R(0,\hxi)| \le \hat d^{-Q+2}(0,\hxi),
 \]
 where $\hat d$ is the induce distance.
Therefore the operator  $K_{\hat R}$ with kernel $\hat R$ is compact since the homogeneous dimension of the boundary is $Q-1$. Finally, the proof ends  following the same steps of the proof of Theorem \ref{c:schauder2} and using Proposition \ref{prop:K1KplaneHT} instead of Proposition \ref{prop:K1Kplane}.
\end{proof}
\end{theorem}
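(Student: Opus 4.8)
The plan is to run, essentially verbatim, the argument of Theorem~\ref{c:schauder2}, replacing Proposition~\ref{prop:K1Kplane} by its $H$-type counterpart Proposition~\ref{prop:K1KplaneHT} and using that $\tfrac12 I + K$ is invertible on $C^{2,\alpha}(\Pi)$, a fact established in Sections~\ref{sc:invertH}--\ref{sc:c2alphaestimeteH} via the reflection trick (Proposition~\ref{IKnormH}) and the continuity method. The only step that genuinely has to be rechecked in the present generality is the compactness of the error operator produced by flattening the boundary.

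First I would flatten $\partial\Omega$ near the non-characteristic point, taken to be $0$. Writing the boundary as a graph $x_1 = w(\hat s)$ with $w(0)=0$, $\nabla w(0)=0$, so that $w(\hat s)=O(|\hat s|^2)$, the map $\Xi(s_1,\hat s) = (s_1 - w(\hat s),\hat s)$ is a diffeomorphism sending $\partial\Omega\cap V$ into $\Pi_\Xi = \{x_1=0\}$ and conjugating $\Delta_\GG$ into $\Delta_\Xi = d\Xi(\Delta_\GG)$, whose fundamental solution is $\Gamma_\Xi(\xi) = \Gamma(x_1 + w(\hat\xi),\hat\xi)$. Expanding in $x_1$ gives $\Gamma_\Xi(x_1,\hat\xi) = \Gamma(x_1,\hat\xi) + R(x_1,\hat\xi)$ with $R$ controlled by $w(\hat\xi)\,\nabla\Gamma$, and $d\Xi$ adds to $X_1^\eta$ a lower-order term with coefficient $O(|\hat\xi|)$ since $\nabla w = O(|\hat\xi|)$. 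Applying $X_{1,\Xi}^\eta$ to $\Gamma_\Xi$ and restricting to $\{y_1=0\}$ then yields $X_{1,\Xi}^\eta\Gamma_\Xi(0,\hat\eta) = X_1^\eta\Gamma(0,\hat\eta) + \hat R(0,\hat\eta)$. The crucial point — the place where the non-characteristic hypothesis is really used — is that $w$ vanishes to second order while $\Gamma$, $\nabla\Gamma$, $\nabla^2\Gamma$ are homogeneous of degrees $2-Q$, $1-Q$, $-Q$; counting homogeneities term by term shows $|\hat R(0,\hat\xi)| \le \hat d(0,\hat\xi)^{2-Q}$, which is one order less singular than the critical kernel on $\Pi_\Xi$ (whose homogeneous dimension is $Q-1$). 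Hence the operator $K_{\hat R}$ with kernel $\hat R$ is compact on $\Gamma^{2,\alpha}(\Pi_\Xi)$.

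Next I would read off the jump relation: by Proposition~\ref{prop:K1KplaneHT} together with the expansion above, the double layer potential on $\Pi_\Xi$ splits as $\mathcal{D}(\phi g) = K_1(\phi g) + K(\phi g) + K_{\hat R}(\phi g)$, and its limit from $\{x_1>0\}$ is $(\tfrac12 I + K + K_{\hat R})(\phi g)$. Since $\tfrac12 I + K$ is invertible and $K_{\hat R}$ compact, $(\tfrac12 I + K)^{-1}K_{\hat R}$ is compact, so $I + (\tfrac12 I + K)^{-1}K_{\hat R}$ is invertible (a compact perturbation of the identity, with trivial kernel by uniqueness for the subelliptic Dirichlet problem), and therefore so is $\tfrac12 I + K + K_{\hat R}$. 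Setting $\mathcal{P}(\phi g) = \mathcal{D}\big((\tfrac12 I + K + K_{\hat R})^{-1}\phi g\big)$ gives a $\Delta_\Xi$-harmonic function in $\{x_1>0\}$ equal to $\phi g$ on $\{x_1=0\}$; transporting back by $\Xi$ and using $\phi\equiv 1$ on $V_0$ produces a $\Delta_\GG$-harmonic function on $V_0\cap\Omega$ with boundary datum $g$ on $V_0\cap\partial\Omega$.

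Finally I would chain the mapping properties: $\mathcal{D}$ is bounded from $C^{2,\alpha}(\Pi_\Xi)$ into $C^{2,\alpha}(\{x_1>0\})$ (by \cite[Main Lemma 13.12]{GreinerStein} or \cite{NagelStein79}), $(\tfrac12 I + K + K_{\hat R})^{-1}$ is bounded on $C^{2,\alpha}(\Pi_\Xi)$ by Theorem~\ref{th:KcontH} plus compactness of $K_{\hat R}$ and the invertibility just shown, $C^{2,\alpha}(\Pi_\Xi) = \Gamma^{2,\alpha}(\Pi_\Xi)$ by Proposition~\ref{prop:Gamma=C2H}, and $\Xi$ is a smooth diffeomorphism; composing these gives $\|\phi u\|_{C^{2,\alpha}(V\cap\bar\Omega)} \le C\|\phi g\|_{\Gamma^{2,\alpha}(\partial\Omega)} \le C\|g\|_{\Gamma^{2,\alpha}(\partial\Omega)}$ for the homogeneous equation. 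The term $f$ is absorbed by interior estimates: the solution $v$ of $\Delta_\GG v = f$, $v=0$ on $\partial\Omega$, obeys $\|v\|_{C^{2,\alpha}(\bar\Omega)} \le \tilde C\|f\|_{C^\alpha(\bar\Omega)}$, and replacing $u$ by $u+v$ yields \eqref{stimeH}. I expect the genuine obstacle to be precisely the decay estimate for $\hat R$: one must check that every contribution of $X_{1,\Xi}^\eta$ acting on $\Gamma_\Xi$ — including the one coming from the lower-order part of the transported field $X_{1,\Xi}$ — is one power of the intrinsic distance better than the principal singular kernel, since this is exactly what makes $K_{\hat R}$ compact and the whole scheme go through.
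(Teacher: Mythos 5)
Your proposal reproduces the paper's argument essentially verbatim: you flatten the boundary with the same diffeomorphism $\Xi$, derive the same expansion $\Gamma_\Xi = \Gamma + R$, establish the same decay bound $|\hat R(0,\hat\xi)|\lesssim \hat d(0,\hat\xi)^{2-Q}$ for the error kernel (which is precisely what makes $K_{\hat R}$ compact on a boundary of homogeneous dimension $Q-1$), and then chain through the jump relation, the Fredholm perturbation of $\tfrac12 I + K$, and the mapping properties of $\mathcal{D}$, $(\tfrac12 I + K + K_{\hat R})^{-1}$, Proposition~\ref{prop:Gamma=C2H}, and $\Xi$, closing with interior estimates for $f$. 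The one place you add something the paper leaves implicit is the observation that injectivity of $I + (\tfrac12 I + K)^{-1}K_{\hat R}$ comes from uniqueness for the Dirichlet problem, which is a reasonable and correct clarification rather than a change of route.
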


Finally, the analogous of Corollary \ref{c:schauder3} for a bounded domain $\Omega \subset \GG$ without characteristic points  follows in the same way, clearly replacing $\Delta_{\hh^1}$ with $\Delta_{\GG}$ .

\bibliographystyle{abbrv} 
\bibliography{bib}

\end{document}